\newtheorem{assumption}{Assumption}
\newcommand{\mx}{\mathbf{x}}
\newcommand{\mW}{\mathbf{W}}
\newcommand{\mI}{\mathbf{I}}
\newcommand{\my}{\mathbf{y}}
\newcommand{\ox}{\overline{x}}
\newcommand{\oy}{\overline{y}}
\newcommand{\bE}{\mathbb{E}}
\newcommand{\mA}{\mathbf{A}}
\newcommand{\mB}{\mathbf{B}}
\newcommand{\mM}{\mathbf{M}}
\newcommand{\mS}{\mathbf{S}}
\newcommand{\mPi}{\mathbf{\Pi}}
\newcommand{\trace}{\mathbf{tr}}
\newcommand{\T}{\intercal}
\newcommand{\degree}{\mathrm{deg}}
\newcommand{\ik}{\mathds{1}_k}
\title{
Distributed Stochastic Gradient Tracking Methods\footnote{Parts of the results appeared in the 57th IEEE Conference on Decision and Control \cite{pu2018distributed}.} 
\thanks{This work was supported in parts by the NSF grant CCF-1717391, the ONR grant no.\ N00014-16-1-2245, and the Shenzhen Research Institute of Big Data Startup Fund JCYJ-SP2019090001.}}
\author{
    Shi Pu
    \and
    Angelia Nedi{\'c}
}
\institute{S. Pu \at
	 Shenzhen Research Institute of Big Data and Institute for Data and Decision Analytics, The Chinese University of Hong Kong, Shenzhen, China.\\\
	\email{pushi@cuhk.edu.cn}\\
	ORCID ID: 0000-0002-5813-527X
	\and
	A. Nedi{\'c} \at
	School of Electrical, Computer, and Energy Engineering, Arizona
	State University, Tempe, AZ 85287.\\
	\email{Angelia.Nedich@asu.edu}
}
\begin{document}

\maketitle

\begin{abstract}
	In this paper, we study the problem of distributed multi-agent optimization over a network, where each agent possesses a local cost function that is smooth and strongly convex. The global objective is to find a common solution that minimizes the average of all cost functions. Assuming agents only have access to unbiased estimates of the gradients of their local cost functions, we consider a distributed stochastic gradient tracking method (DSGT) and a gossip-like stochastic gradient tracking method (GSGT). We show that, in expectation, the iterates generated by each agent are attracted to a neighborhood of the optimal solution, where they accumulate exponentially fast (under a constant stepsize choice). 
	Under DSGT, the limiting (expected) error bounds on the distance of the iterates from the optimal solution decrease with the network size $n$, which is a comparable performance to a centralized stochastic gradient algorithm. Moreover, we show that when the network is well-connected, GSGT incurs lower communication cost than DSGT while maintaining a similar computational cost.
	Numerical example further demonstrates the effectiveness of the proposed methods.
	
	\vspace{.1in}
	
	\noindent {\bf Keywords:} distributed optimization, stochastic optimization, convex programming, communication networks

	\vspace{.07in}
	
	\noindent {\bf AMS subject classification:} 90C15, 90C25, 68Q25
\end{abstract}

\section{Introduction}
Consider a set of agents $\mathcal{N}=\{1,2,\ldots,n\}$ connected over a network. Each agent has a local smooth and strongly convex cost function $f_i:\mathbb{R}^{p}\rightarrow \mathbb{R}$. The global objective is to locate $x\in\mathbb{R}^p$ that minimizes the average of all cost functions:
\begin{equation}
\min_{x\in \mathbb{R}^{p}}f(x)\left(=\frac{1}{n}\sum_{i=1}^{n}f_i(x)\right).  \label{opt Problem_def}
\end{equation}%
Scenarios in which problem (\ref{opt Problem_def}) is considered include distributed machine learning \cite{forrester2007multi,nedic2017fast,cohen2017projected}, multi-agent target seeking \cite{pu2016noise,chen2012diffusion}, and wireless networks \cite{cohen2017distributed,mateos2012distributed,baingana2014proximal}, among many others.

To solve problem (\ref{opt Problem_def}), we assume each agent $i$ queries a stochastic oracle ($\mathcal{SO}$) to obtain noisy gradient samples of the form $g_i(x,\xi_i)$ that satisfies the following condition:
\begin{assumption}
	\label{asp: gradient samples}
	For all $i\in\mathcal{N}$ and all $x\in\mathbb{R}^p$, 
	each random vector $\xi_i\in\mathbb{R}^m$ is independent, and
	\begin{equation}
	\begin{split}
	& \mathbb{E}_{\xi_i}[g_i(x,\xi_i)\mid x] =  \nabla f_i(x),\\
	& \mathbb{E}_{\xi_i}[\|g_i(x,\xi_i)-\nabla f_i(x)\|^2\mid x]  \le  \sigma^2\quad\hbox{\ for some $\sigma>0$}.
	\end{split}
	\end{equation}
\end{assumption}
The above assumption of stochastic gradients holds true for many on-line distributed learning problems, where $f_i(x)=\bE_{\xi_i}[F_i(x,\xi_i)]$ denotes the expected loss function agent $i$ wishes to minimize, while independent samples $\xi_i$ are gathered continuously over time.
For another example, in simulation-based optimization, the gradient estimation often incurs noise 
that can be due to various sources, such as modeling and discretization errors, 
incomplete convergence, and finite sample size for Monte-Carlo methods~\cite{kleijnen2008design}. 

Distributed algorithms dealing with problem (\ref{opt Problem_def}) have been studied extensively in the literature \cite{tsitsiklis1986distributed,nedic2009distributed,nedic2010constrained,lobel2011distributed,jakovetic2014fast,kia2015distributed,shi2015extra,di2016next,qu2017harnessing,nedic2017achieving,pu2020push}.
Recently, there has been considerable interest in distributed implementation of stochastic gradient algorithms \cite{ram2010distributed,srivastava2011distributed,duchi2012dual,bianchi2013convergence,cavalcante2013distributed,towfic2014adaptive,chatzipanagiotis2015augmented,chen2015learning,chen2015learning2,chatzipanagiotis2016distributed,nedic2016stochastic,lan2017communication,lian2017can,pu2017flocking,sayin2017stochastic,pu2018swarming,jakovetic2018convergence}. The literature has shown that distributed methods may compete with, or even outperform, their centralized counterparts under certain conditions \cite{chen2015learning,chen2015learning2,lian2017can,pu2017flocking,pu2018swarming}. For instance, in our recent work~\cite{pu2017flocking}, we proposed a flocking-based approach for distributed stochastic optimization which beats a centralized gradient method in real-time assuming that all $f_i$ are identical. 
However, to the best of our knowledge, there is no distributed stochastic gradient method addressing problem (\ref{opt Problem_def}) that shows comparable performance with a centralized approach  for optimizing the sum of smooth and strongly convex objective functions under Assumption \ref{asp: gradient samples} only. In particular, under constant stepsize policies none of the existing algorithms achieve an error bound that is decreasing in the network size $n$. 


A distributed gradient tracking method was proposed in~\cite{di2016next,nedic2017achieving,qu2017harnessing}, where 
the agent-based auxiliary variables $y_i$ were introduced to track the average gradients of $f_i$ assuming accurate gradient information is available. It was shown that the method, with constant stepsize, generates iterates that converge linearly to the optimal solution.
Inspired by the approach, in this paper we consider a distributed stochastic gradient tracking method (DSGT). By comparison, in our proposed algorithm $y_i$ are tracking the stochastic gradient averages of $f_i$.
We are able to show that the iterates generated by each agent reach, in expectation, a neighborhood of the optimal point exponentially fast under a constant stepsize.
Interestingly, with a sufficiently small stepsize, the limiting error bounds on the distance 
between the agent iterates and the optimal solution decrease in the network size, which is comparable to the performance of a centralized stochastic gradient algorithm. 

Gossip-based communication protocols are popular choices for distributed computation due to their low communication costs \cite{boyd2006randomized,lu2011gossip,lee2016asynchronous,mathkar2016nonlinear}.
In the second part of this paper, we consider a gossip-like stochastic gradient tracking algorithm (GSGT) where at each iteration, an agent wakes up uniformly randomly and communicates with one of her neighbors or updates by herself. Similar to DSGT, the method produces iterates that converge to a neighborhood of the optimal point exponentially fast under a sufficiently small constant stepsize. When the network of agents is well-connected (e.g., complete network, almost all regular graphs), GSGT is shown to employ a lower communication burden and similar computational cost when compared to DSGT.

\subsection{Related Work}
We now briefly review the literature on (distributed) stochastic optimization.
First of all, our work is related to the extensive literature in stochastic approximation (SA) methods dating back to the seminal works~\cite{robbins1951stochastic} and~\cite{kiefer1952stochastic}. These works include the analysis of convergence (conditions for convergence, rates of convergence, suitable choice of stepsize) in the context of diverse noise models~\cite{kushner2003stochastic}.
Assuming the objective function $f$ is strongly convex with Lipschitz continuous gradients, the optimal rate of convergence for solving problem (\ref{opt Problem_def}) has been shown to be $\mathcal{O}\left(1/k\right)$ under a diminishing SA stepsize where $k$ denotes the iteration number \cite{nemirovski2009robust}. With a constant stepsize $\alpha>0$ that is sufficiently small, the iterates generated by a stochastic gradient method is attracted to an $\mathcal{O}(\alpha)$-neighborhood of the optimal solution exponentially fast (in expectation).

Distributed implementations of stochastic gradient methods have become increasingly popular in recent years. In \cite{ram2010distributed}, the authors considered minimizing a sum of (possibly nonsmooth) convex objective functions subject to a common convex constraint set. It was shown that when the means of the stochastic subgradient errors diminish, there is mean consensus among the agents and mean convergence to the optimum function value under SA stepsizes.
The work \cite{srivastava2011distributed} used two diminishing stepsizes to deal with communication noises and subgradient errors, respectively. Asymptotic convergence to the optimal set was established; for constant stepsizes asymptotic error bounds were derived.  
In \cite{duchi2012dual}, a distributed dual averaging method was proposed for minimizing (possibly nonsmooth) convex functions. Under a carefully chosen SA stepsize sequence, the method exhibits the convergence rate $\mathcal{O}(\frac{n\log(k)}{(1-\lambda_2(\mW))\sqrt{k}})$, in which $\lambda_2(\mW)$ denotes the second largest singular value of the doubly stochastic mixing matrix $\mW$.
Paper \cite{bianchi2013convergence} considered a projected stochastic gradient algorithm for solving non-convex optimization problems by combining a local stochastic gradient update and a gossip step. It was proved that consensus is asymptotically
achieved in the network and the solutions converge to the
set of KKT points with SA stepsizes.
A distributed online algorithm was devised and analyzed in \cite{cavalcante2013distributed}  for solving dynamic
optimization problems in noisy communication environments. Sufficient conditions were provided for almost sure
convergence of the algorithm. 
In \cite{towfic2014adaptive}, the authors proposed an adaptive diffusion algorithm based on penalty methods. Under a constant stepsize $\alpha$, it was shown that the expected distance between the
optimal solution and that obtained at each node is bounded by $\mathcal{O}(\alpha)$. The work \cite{chen2015learning,chen2015learning2} further showed that under a sufficiently small
stepsize and certain conditions on the stochastic gradients, distributed methods are able to achieve
the same performance level as that of a centralized approach.
Paper \cite{chatzipanagiotis2016distributed} considered the problem of distributed constrained convex optimization subject to multiple noise terms in
both computation and communication stages.
The authors utilized an augmented Lagrangian framework and established the almost sure convergence of the algorithm under a diminishing stepsize policy. In \cite{nedic2016stochastic}, a subgradient-push method was investigated for distributed optimization
over time-varying directed graphs. When the objective function is strongly convex, the scheme exhibits the $\mathcal{O}(\frac{\ln k}{k})$ rate of convergence.

In a recent work \cite{lan2017communication}, a class of decentralized first-order methods for nonsmooth and stochastic optimization was presented. The class was shown to exhibit the $\mathcal{O}(\frac{1}{k})$ (respectively, $\mathcal{O}(\frac{1}{\sqrt{k}})$) rate of convergence for minimizing the sum of strongly convex functions (respectively, general convex functions).
The work in \cite{aybat2017distributed} considered a composite convex optimization problem with noisy gradient information and showed the $O(\frac{1}{\sqrt{k}})$ convergence rate using an ADMM-based approach.
Paper \cite{lian2017can} considered a decentralized stochastic gradient algorithm that achieves the $\mathcal{O}(\frac{1}{k}+\frac{1}{\sqrt{nk}})$ rate of convergence for minimizing the sum of non-convex functions. The rate is comparable to that of a centralized algorithm when $k$ is large enough. At the same time, the communication cost for the decentralized approach is lower. Papers \cite{pu2017flocking,pu2018swarming} also demonstrates the advantage of distributively implementing a stochastic gradient method assuming that all $f_i$ are identical and sampling times are random and non-negligible.
The work \cite{sayin2017stochastic} utilized a time-dependent weighted mixing of stochastic subgradient updates to achieve the convergence rate of $\mathcal{O}(\frac{n\sqrt{n}}{(1-\lambda_2(\mW))k})$ for minimizing the sum of (possibly nonsmooth) strongly convex functions. 
In \cite{sirb2018decentralized}, the authors considered a decentralized consensus-based algorithm with delayed gradient information. The method was shown to achieve the optimal $\mathcal{O}(\frac{1}{\sqrt{k}})$ rate of convergence for general convex functions. In \cite{jakovetic2018convergence}, the $\mathcal{O}(\frac{1}{k})$ convergence rate was established for strongly convex costs and random networks. 


\subsection{Main Contribution}
Our main contribution is summarized as follows.
Firstly, we propose a novel distributed stochastic gradient tracking method (DSGT) for optimizing the sum of smooth and strongly convex objective functions. We employ an auxiliary variable $y_i$ for each agent that tracks the average stochastic gradients of the cost functions.
We show that, under a constant stepsize choice, the algorithm is comparable to a centralized stochastic gradient scheme in terms of their convergence speeds and the ultimate error bounds. In particular, the obtained error bound under DSGT decreases with the network size $n$, which has not been shown in the literature to the best of our knowledge. Moreover, assuming the gradient estimates are accurate, DSGT recovers the linear rate of convergence to the optimal solution \cite{nedic2017achieving,qu2017harnessing}, which is also a unique feature among other distributed stochastic gradient algorithms.

Secondly, with an SA stepsize $\alpha_k\rightarrow 0$, DSGT enjoys the optimal $\mathcal{O}(\frac{1}{k})$ rate of convergence to the optimal point. In addition, we characterize the dependency of the constant factors in the stepsize and the convergence rate on the properties of the mixing matrix as well as the characteristics of the objective functions, such as the strong convexity factor and the Lipschitz constant.

Thirdly, we introduce a gossip-like stochastic gradient tracking method that is efficient in communication. We show that, under a sufficiently small constant stepsize, GSGT also produces iterates that converge to a neighborhood of the optimal point exponentially fast. Again, when the gradient estimates are accurate, GSGT recovers the linear rate of convergence to the optimal solution.
Compared to DSGT, we show that when the network is well-connected (e.g., complete network, almost all regular graphs), GSGT incurs lower communication cost than DSGT by a factor of $\mathcal{O}(\frac{|\mathcal{E}|}{n})$ ($|\mathcal{E}|$ denoting the number of edges in the network) while maintaining a similar computational cost.

Finally, we provide a numerical example that demonstrates the effectiveness of the proposed methods when contrasted with the centralized stochastic gradient algorithm and an existing variant of distributed stochastic gradient method.

\subsection{Notation and Assumptions}
\label{subsec:pre}
Throughout the paper, vectors default to columns if not otherwise specified.
Let each agent $i$ hold a local copy $x_i\in\mathbb{R}^p$ of the decision variable and an auxiliary variable $y_i\in\mathbb{R}^p$. Their values at iteration/time $k$ are denoted by $x_{i,k}$ and $y_{i,k}$, respectively. 
We let
\begin{equation*}
\mx := [x_1, x_2, \ldots, x_n]^{\T}\in\mathbb{R}^{n\times p},\ \ \my := [y_1, y_2, \ldots, y_n]^{\T}\in\mathbb{R}^{n\times p},
\end{equation*}
and
\begin{equation*}
\ox :=  \frac{1}{n}\mathbf{1}^{\T} \mx\in\mathbb{R}^{1\times p},\ \ \oy :=  \frac{1}{n}\mathbf{1}^{\T}\my\in\mathbb{R}^{1\times p},
\end{equation*}
where $\mathbf{1}$ denotes the vector with all entries equal to 1.
We define an aggregate objective function of the local variables:
\begin{equation}
F(\mx):=\sum_{i=1}^nf_i(x_i),
\end{equation}
and let
\begin{equation*}
\nabla F(\mx):=\left[\nabla f_1(x_1), \nabla f_2(x_2), \ldots, \nabla f_n(x_n)\right]^{\T}\in\mathbb{R}^{n\times p}.
\end{equation*}
In addition, denote
\begin{equation*}
\begin{split}
& h(\mx):= \frac{1}{n}\mathbf{1}^{\T}\nabla F(\mx)\in\mathbb{R}^{1\times p},\\
& \boldsymbol{\xi} := [\xi_1, \xi_2, \ldots, \xi_n]^{\T}\in\mathbb{R}^{n\times m},\\
& G(\mx,\boldsymbol{\xi}):= [g_1(x_1,\xi_1), g_2(x_2,\xi_2), \ldots, g_n(x_n,\xi_n)]^{\T}\in\mathbb{R}^{n\times p}.
\end{split}
\end{equation*}

The inner product of two vectors $a,b$ of the same dimension is denoted by $\langle a,b\rangle$. For two matrices $\mA,\mB\in\mathbb{R}^{n\times p}$, we let $\langle \mA,\mB\rangle$ be the Frobenius inner product.
We use $\|\cdot\|$ to denote the $2$-norm of vectors; for matrices, $\|\cdot\|$ represents the Frobenius norm. The spectral radius of a square matrix $\mM$ is denoted by $\rho(\mM)$.

We make the following standing assumption on the individual objective functions $f_i$.
\begin{assumption}
	\label{asp: strconvexity}
	Each $f_i:\mathbb{R}^p\rightarrow \mathbb{R}$ is  $\mu$-strongly convex with $L$-Lipschitz continuous gradients, i.e., for any $x,x'\in\mathbb{R}^p$,
	\begin{equation*}
	\begin{split}
	& \langle \nabla f_i(x)-\nabla f_i(x'),x-x'\rangle\ge \mu\|x-x'\|^2,\\
	& \|\nabla f_i(x)-\nabla f_i(x')\|\le L \|x-x'\|.
	\end{split}
	\end{equation*}
\end{assumption}
We note that, under Assumption~\ref{asp: strconvexity}, problem (\ref{opt Problem_def}) has a unique solution denoted by $x^*\in\mathbb{R}^{1\times p}$.

A graph is a pair $\mathcal{G}=(\mathcal{V},\mathcal{E})$ where $\mathcal{V}=\{1,2,\ldots,n\}$ is the set of vertices (nodes) and $\mathcal{E}\subseteq \mathcal{V}\times \mathcal{V}$ represents the set of edges connecting vertices. We assume agents communicate in an undirected graph, i.e., $(i,j)\in\mathcal{E}$ iff (if and only if) $(j,i)\in\mathcal{E}$.
For each agent $i$, let $\mathcal{N}_i=\{j\mid j\neq i, (i,j)\in\mathcal{E}\}$ be its set of neighbors. The cardinality of $\mathcal{N}_i$, denoted by $\deg(i)$, is referred to as agent $i$'s degree.
We consider the following condition regarding the interaction graph of agents.
\begin{assumption}
	\label{asp: network}
	The graph $\mathcal{G}$ corresponding to the network of agents is undirected and connected, i.e., there exists a path between any two agents.
\end{assumption}

\subsection{Organization of the Paper}
The paper is organized as follows. In Section~\ref{sec: set}, we introduce the distributed stochastic gradient tracking method and present its main convergence results. We perform analysis in Section~\ref{sec:cohesion}. In Section \ref{sec: gossip} we propose the gossip-like stochastic gradient tracking method. A numerical example is provided in Section~\ref{sec: simulation} to illustrate our theoretical findings. 
Section~\ref{sec: conclusion} concludes the paper.

\section{A Distributed Stochastic Gradient Tracking Method (DSGT)}
\label{sec: set}
We consider the following distributed stochastic gradient tracking method: At each step $k\in\mathbb{N}$, 
every agent $i$ independently implements the following two steps:
\begin{equation}
\label{eq: x_i,k}
\begin{split}
x_{i,k+1} = & \sum_{j=1}^{n}w_{ij}(x_{j,k}-\alpha y_{j,k}), \\
y_{i,k+1} = & \sum_{j=1}^{n}w_{ij}y_{j,k}+g_i(x_{i,k+1},\xi_{i,k+1})-g_i(x_{i,k},\xi_{i,k}),
\end{split}
\end{equation}
where $w_{ij}$ are nonnegative weights and $\alpha>0$ is a constant stepsize. Agent $i$ and $j$  are connected iff $w_{ij}, w_{ji}>0$. The iterates are initiated with an arbitrary 
$x_{i,0}$ and $y_{i,0}= g_i(x_{i,0},\xi_{i,0})$ for all~$i\in{\cal N}$.
We can also write (\ref{eq: x_i,k}) in the following compact form:
\begin{equation}
\label{eq: x_k}
\begin{split}
\mx_{k+1} = & \mW(\mx_k-\alpha \my_k), \\
\my_{k+1} = & \mW\my_k+G(\mx_{k+1},\boldsymbol{\xi}_{k+1})-G(\mx_k,\boldsymbol{\xi}_k),
\end{split}
\end{equation}
where $\mW=[w_{ij}]\in\mathbb{R}^{n\times n}$ denotes the coupling matrix of agents. We assume that $\mW$ satisfies the following condition.
\begin{assumption}
	\label{asp: W}
	Nonnegative coupling matrix $\mW$ is doubly stochastic, 
	i.e., $\mW\mathbf{1}=\mathbf{1}$ and $\mathbf{1}^{\T}\mW=\mathbf{1}^{\T}$. 
	In addition, $w_{ii}>0$ for some $i\in\mathcal{N}$.
\end{assumption}
In the subsequent analysis, we will frequently use the following result, which is a direct implication of Assumption \ref{asp: network} and Assumption \ref{asp: W} (see \cite{qu2017harnessing} Section II-B).
\begin{lemma}
	\label{lem: spectral norm}
	Let Assumption \ref{asp: network} and Assumption \ref{asp: W} hold, and let $\rho_w$ denote the spectral norm of 
	the matrix $\mW-\frac{1}{n}\mathbf{1}\mathbf{1}^{\T}$. Then, $\rho_w<1$ and 
	\begin{equation*}
	\|\mW\omega-\mathbf{1}\overline{\omega}\|\le \rho_w\|\omega-\mathbf{1}\overline{\omega}\|
	\end{equation*}
	for all $\omega\in\mathbb{R}^{n\times p}$, where $\overline{\omega}=\frac{1}{n}\mathbf{1}^{\T}\omega$.
\end{lemma}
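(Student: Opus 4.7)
The plan: decouple the two conclusions, handling the algebraic inequality first and then the spectral bound $\rho_w<1$ via a Perron--Frobenius argument.

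First, set $\mPi:=\frac{1}{n}\mathbf{1}\mathbf{1}^{\T}$, so that $\mathbf{1}\overline{\omega}=\mPi\omega$. Assumption~\ref{asp: W} implies $\mW\mPi=\mPi$, $\mPi\mW=\mPi$, and $\mPi^{2}=\mPi$, from which a direct expansion yields the key identity
\[
(\mW-\mPi)(\omega-\mPi\omega)=\mW\omega-\mPi\omega.
\]
Taking Frobenius norms on both sides and invoking the standard bound $\|AB\|\le\|A\|_{2}\|B\|$ (applied columnwise to $\omega-\mPi\omega$) delivers the claimed contraction with $\rho_w=\|\mW-\mPi\|_{2}$.

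Second, to establish $\rho_w<1$, I would route through $\mW$'s own primitivity. Under Assumptions~\ref{asp: network} and~\ref{asp: W}, $\mW$ is doubly stochastic, corresponds to a connected graph, and carries at least one diagonal self-loop; a nonnegative matrix with these properties is primitive, so $\mW^{m}>0$ for some $m\in\mathbb{N}$, and consequently $\mS:=\mW\mW^{\T}$ satisfies $\mS^{m}>0$. Thus $\mS$ is symmetric, doubly stochastic, and primitive. Using the same three identities,
\[
(\mW-\mPi)(\mW-\mPi)^{\T}=\mW\mW^{\T}-\mPi\mW^{\T}-\mW\mPi+\mPi^{2}=\mS-\mPi,
\]
so $\rho_w^{2}=\lambda_{\max}(\mS-\mPi)$. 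Perron--Frobenius applied to $\mS$ forces $1$ to be a simple eigenvalue with eigenvector $\mathbf{1}/\sqrt{n}$ and all other eigenvalues to lie strictly in $(-1,1)$. Subtracting $\mPi$ annihilates the top eigenpair and leaves the others untouched, so $\lambda_{\max}(\mS-\mPi)<1$ and hence $\rho_w<1$.

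The main obstacle is the primitivity step for $\mW$ itself. Connectivity provides irreducibility, and the single guaranteed diagonal self-loop provides aperiodicity, but the combinatorial verification that one self-loop propagates through the connected graph to kill the period of the whole matrix has to be done carefully. Once this is in place, the transfer to $\mS$ and the Perron--Frobenius conclusion are routine, which is presumably why the authors prefer to cite \cite{qu2017harnessing}, Sec.~II-B, rather than reproduce the argument.
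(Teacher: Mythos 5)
Your first half is correct and clean: with $\mathbf{J}:=\frac{1}{n}\mathbf{1}\mathbf{1}^{\T}$, double stochasticity gives $\mW\mathbf{J}=\mathbf{J}\mW=\mathbf{J}=\mathbf{J}^2$, hence $(\mW-\mathbf{J})(\omega-\mathbf{J}\omega)=\mW\omega-\mathbf{J}\omega$, and $\|AB\|\le\|A\|_2\|B\|$ gives the contraction with constant $\rho_w=\|\mW-\mathbf{J}\|_2$ exactly as the lemma defines it. Note that the paper itself offers no proof of this lemma (it only cites \cite{qu2017harnessing}, Sec.~II-B), so yours is a genuinely self-contained argument; the identity $(\mW-\mathbf{J})(\mW-\mathbf{J})^{\T}=\mW\mW^{\T}-\mathbf{J}$ and the reduction of $\rho_w^2$ to $\lambda_{\max}(\mS-\mathbf{J})$ with $\mS=\mW\mW^{\T}$ are also correct.

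The gap is in the step you dismiss as routine: ``$\mW^{m}>0$, and consequently $\mS^{m}>0$.'' This implication is false for general primitive nonnegative matrices. Take the $3\times3$ pattern with positive entries in positions $(1,1),(1,2),(2,3),(3,1)$: the associated digraph is strongly connected with a self-loop, so the matrix is primitive, yet its rows $1$ and $2$ (and rows $2$ and $3$) have disjoint supports, so $AA^{\T}$ is reducible (node $2$ is isolated in its graph) and no power of it is positive. So primitivity of $\mW$ alone does not transfer to $\mS$, and the ``consequently'' is doing unearned work — in fact this transfer, not the aperiodicity of $\mW$, is where the real content lies. The step can be repaired, but only by invoking structure you never use: by Assumption~\ref{asp: network} the positive off-diagonal pattern of $\mW$ contains a connected \emph{undirected} graph ($w_{ij}>0$ iff $w_{ji}>0$ on graph edges), so the alternating condition $w_{ik_1}>0,\ w_{i_1k_1}>0,\ w_{i_1k_2}>0,\dots$ that characterizes $(\mS^{m})_{ij}>0$ can be rewritten, by flipping every second factor, as the existence of an ordinary directed walk of length $2m$ from $i$ to $j$ in the graph of $\mW$; such walks exist for all pairs once $2m$ exceeds the diameter, by routing through the node with $w_{ii}>0$. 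A lighter repair is also available: $\mS$ is symmetric, positive semidefinite and doubly stochastic, so its spectrum already lies in $[0,1]$ and you only need $1$ to be a \emph{simple} eigenvalue, i.e.\ irreducibility of $\mS$ rather than primitivity — but irreducibility of $\mS$ requires the same symmetric-sparsity argument, so either way the undirectedness of the communication graph must enter explicitly.
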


Algorithm (\ref{eq: x_i,k}) is closely related to the schemes considered in \cite{di2016next,nedic2017achieving,qu2017harnessing}, in which auxiliary variables $y_{i,k}$ 
were introduced to track the average $\frac{1}{n}\sum_{i=1}^{n}\nabla f_i(x_{i,k})$. This design ensures that the algorithms achieve linear convergence under a constant stepsize choice.
Correspondingly, under our approach $y_{i,k}$ are (approximately) tracking $\frac{1}{n}\sum_{i=1}^{n}g_i(x_{i,k},\xi_{i,k})$.
To see why this is the case, note that $\oy_k = \frac{1}{n}\mathbf{1}^{\T} \my_k$.
Since $y_{i,0}=g(x_{i,0},\xi_{i,0}),\forall i$, by induction we have
\begin{equation}
\oy_k=\frac{1}{n}\mathbf{1}^{\T}G(\mx_k,\boldsymbol{\xi}_k)
=\frac{1}{n}\sum_{i=1}^{n}g_i(x_{i,k},\xi_{i,k}),\forall k.
\end{equation}
It will be shown that $\my_k$ is close to $\mathbf{1}\oy_k$ in expectation when $k$ is sufficiently large. 
As a result, $y_{i,k}$ are (approximately) tracking $\frac{1}{n}\sum_{i=1}^{n}g_i(x_{i,k},\xi_{i,k})$.

It is worth noting that compared to the standard distributed subgradient methods \cite{nedic2009distributed}, DSGT incurs two times the communication and storage costs per iteration.

\subsection{Main Results}
\label{subsec: main}
Main convergence properties of DSGT are covered in the following theorem.
\begin{theorem}
	\label{Theorem1}
	Let $\Gamma>1$ be arbitrarily chosen. Suppose Assumptions \ref{asp: gradient samples}-\ref{asp: strconvexity} hold and 
	the stepsize $\alpha$ satisfies
	\begin{equation}
	\label{alpha_ultimate_bound}
	\alpha\le \min\left\{\frac{(1-\rho_w^2)}{12\rho_w L},\frac{(1-\rho_w^2)^2}{2\sqrt{\Gamma}L\max\{6\rho_w\|\mW-\mI\|,1-\rho_w^2\}}, \frac{(1-\rho_w^2)}{3\rho_w^{2/3}L}\left[\frac{\mu^2}{L^2}\frac{(\Gamma-1)}{\Gamma(\Gamma+1)}\right]^{1/3}\right\}.
	\end{equation}
	Then $\sup_{l\ge k}\bE[\|\ox_l-x^*\|^2]$ and $\sup_{l\ge k}\bE[\|\mx_{l}-\mathbf{1}\ox_{l}\|^2]$, respectively, converge to $\limsup_{k\rightarrow\infty}\bE[\|\ox_k-x^*\|^2]$ and $\limsup_{k\rightarrow\infty}\bE[\|\mx_k-\mathbf{1}\ox_k\|^2]$ at the linear rate $\mathcal{O}(\rho(\mA)^k)$, where $\rho(\mA)<1$ is the spectral radius of the matrix $\mA$ given by
	\begin{eqnarray*}
		\mA=\begin{bmatrix}
			1-\alpha\mu & \frac{\alpha L^2}{\mu n}(1+\alpha\mu) & 0\\
			0 & \frac{1}{2}(1+\rho_w^2) & \alpha^2\frac{(1+\rho_w^2)\rho_w^2}{(1-\rho_w^2)}\\
			2\alpha nL^3 & \left(\frac{1}{\beta}+2\right)\|\mW-\mI\|^2 L^2+3\alpha L^3 & \frac{1}{2}(1+\rho_w^2)
		\end{bmatrix},
	\end{eqnarray*}
	where $\beta=\frac{1-\rho_w^2}{2\rho_w^2}-4\alpha L-2\alpha^2 L^2$.
	Furthermore,
	\begin{equation}
	\label{error_bound_ultimate}
	\limsup_{k\rightarrow\infty}\bE[\|\ox_k-x^*\|^2]\le \frac{(\Gamma+1)}{\Gamma}\frac{\alpha\sigma^2}{\mu n}
	+\left(\frac{\Gamma+1}{\Gamma-1}\right)\frac{4\alpha^2 L^2(1+\alpha\mu)(1+\rho_w^2)\rho_w^2}{\mu^2 n(1-\rho_w^2)^3}M_{\sigma},
	\end{equation}
	and
	\begin{equation}
	\label{consensus_error_bound_ultimate}
	\limsup_{k\rightarrow\infty}\bE[\|\mx_k-\mathbf{1}\ox_k\|^2]
	\le \left(\frac{\Gamma+1}{\Gamma-1}\right)\frac{4\alpha^2 (1+\rho_w^2)\rho_w^2(2\alpha^2L^3\sigma^2+\mu M_{\sigma})}{\mu(1-\rho_w^2)^3},
	\end{equation}
	where 
	\begin{equation}
	\label{M_sigma}
	M_{\sigma}:=\left[3\alpha^2L^2+2(\alpha L+1)(n+1)\right]\sigma^2.
	\end{equation}
\end{theorem}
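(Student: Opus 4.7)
The plan is to reduce the theorem to a component-wise linear vector recursion $\mathbf{v}_{k+1} \le \mA \mathbf{v}_k + \mathbf{b}$, where the three coordinates of $\mathbf{v}_k$ are $\bE[\|\ox_k - x^*\|^2]$, $\bE[\|\mx_k - \mathbf{1}\ox_k\|^2]$, and $\bE[\|\my_k - \mathbf{1}\oy_k\|^2]$, and $\mathbf{b}$ collects the noise-variance contributions. This is the stochastic analogue of the deterministic tracking analysis in \cite{nedic2017achieving,qu2017harnessing}. Once the recursion is in hand with $\rho(\mA) < 1$, standard fixed-point comparison in the nonnegative cone yields the claimed tail bound converging to $(\mI - \mA)^{-1}\mathbf{b}$ at rate $\rho(\mA)^k$.

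The first row is obtained from the averaged dynamics. Projecting the $\mx$-update onto $\tfrac{1}{n}\mathbf{1}^{\T}$ and using double stochasticity yields $\ox_{k+1} = \ox_k - \alpha \oy_k$, and the paper has already noted $\oy_k = \tfrac{1}{n}\sum_i g_i(x_{i,k},\xi_{i,k})$. Expanding $\|\ox_{k+1}-x^*\|^2$ and taking conditional expectation, I use Assumption~\ref{asp: gradient samples} to split off the martingale part (whose variance collapses to $\alpha^2\sigma^2/n$ by cross-agent independence of $\xi_{i,k}$), control the inner product $\langle\ox_k-x^*,\nabla f(\ox_k)\rangle$ via $\mu$-strong convexity, and absorb $h(\mx_k)-\nabla f(\ox_k)$ via $L$-smoothness and Young's inequality. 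This reproduces row one of $\mA$ and the $\alpha^2\sigma^2/n$ entry of $\mathbf{b}$.

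For row two, I apply Lemma~\ref{lem: spectral norm} to $\mx_{k+1}-\mathbf{1}\ox_{k+1}=(\mW-\tfrac{1}{n}\mathbf{1}\mathbf{1}^{\T})(\mx_k-\alpha\my_k)$, split $\my_k=(\my_k-\mathbf{1}\oy_k)+\mathbf{1}\oy_k$, and apply $\|a+b\|^2 \le (1+c)\|a\|^2+(1+1/c)\|b\|^2$ with $c=(1-\rho_w^2)/(2\rho_w^2)$; the resulting diagonal is $\tfrac12(1+\rho_w^2)$ and the $(2,3)$ coefficient is $\alpha^2(1+\rho_w^2)\rho_w^2/(1-\rho_w^2)$. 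Row three is the most delicate: from $\my_{k+1}-\mathbf{1}\oy_{k+1}=(\mW-\tfrac{1}{n}\mathbf{1}\mathbf{1}^{\T})\my_k+(\mI-\tfrac{1}{n}\mathbf{1}\mathbf{1}^{\T})(G(\mx_{k+1},\boldsymbol{\xi}_{k+1})-G(\mx_k,\boldsymbol{\xi}_k))$, I apply Lemma~\ref{lem: spectral norm} and a splitting with the parameter $\beta$ in the statement, and bound $\bE\|G(\mx_{k+1},\boldsymbol{\xi}_{k+1})-G(\mx_k,\boldsymbol{\xi}_k)\|^2 \le L^2\bE\|\mx_{k+1}-\mx_k\|^2+2n\sigma^2$ via Lipschitzness plus independent noise. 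Writing $\mx_{k+1}-\mx_k=(\mW-\mI)(\mx_k-\mathbf{1}\ox_k)-\alpha\mW\my_k$ (using $(\mW-\mI)\mathbf{1}=0$) and then $\|\my_k\|^2\le 2\|\my_k-\mathbf{1}\oy_k\|^2+2n\|\oy_k\|^2$, and linking $\|\oy_k\|^2$ back to $\|\ox_k-x^*\|^2$ through $h(\mx_k)$ and $\nabla f(\ox_k)$, produces the $(3,1)$ entry $2\alpha n L^3$, the $(3,2)$ entry $(\tfrac{1}{\beta}+2)\|\mW-\mI\|^2 L^2+3\alpha L^3$, and the noise constant $M_\sigma$ defined in (\ref{M_sigma}).

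I expect the main obstacle to be verifying $\rho(\mA)<1$ under the stepsize restriction (\ref{alpha_ultimate_bound}). The plan is to exhibit a strictly positive vector $\mathbf{u}=(u_1,u_2,u_3)^{\T}$ with $\mA\mathbf{u}\le \tfrac{\Gamma}{\Gamma+1}\mathbf{u}$ componentwise; by Perron--Frobenius this forces $\rho(\mA)\le \tfrac{\Gamma}{\Gamma+1}<1$. The ratios $u_i/u_j$ are the degrees of freedom controlled by $\Gamma>1$, and the three terms in the min defining $\alpha$ correspond respectively to (i) keeping $\beta>0$ so that $\mA$ is well-defined, (ii) controlling the short loop between rows two and three so that the consensus block contracts below $\tfrac12(1+\rho_w^2)+\varepsilon$, and (iii) controlling the long cycle $(1,2)\to(2,3)\to(3,1)$ against the strong-convexity slack $\alpha\mu$ in the $(1,1)$ entry. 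The explicit bounds (\ref{error_bound_ultimate}) and (\ref{consensus_error_bound_ultimate}) then follow by extracting the first two coordinates of $(\mI-\mA)^{-1}\mathbf{b}$; the prefactor $(\Gamma+1)/(\Gamma-1)$ is precisely the amplification created when inverting against $\mathbf{u}$, while the leading $(\Gamma+1)/\Gamma$ in (\ref{error_bound_ultimate}) comes from Young's inequality in the first-row derivation.
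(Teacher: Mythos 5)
Your overall architecture---the componentwise recursion $\mathbf{v}_{k+1}\le\mA\mathbf{v}_k+\mathbf{b}$ in the three coordinates $\bE[\|\ox_k-x^*\|^2]$, $\bE[\|\mx_k-\mathbf{1}\ox_k\|^2]$, $\bE[\|\my_k-\mathbf{1}\oy_k\|^2]$, followed by a spectral-radius argument and reading the limits off $(\mI-\mA)^{-1}\mathbf{b}$---is exactly the paper's, and your identification of which of the three stepsize restrictions controls which cycle of $\mA$ is correct. However, there are two genuine gaps.

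First, in the third row you bound $\bE\|G(\mx_{k+1},\boldsymbol{\xi}_{k+1})-G(\mx_k,\boldsymbol{\xi}_k)\|^2$ by $L^2\bE\|\mx_{k+1}-\mx_k\|^2+2n\sigma^2$ ``via Lipschitzness plus independent noise.'' The noise is not independent in the way this requires: $\mx_{k+1}=\mW(\mx_k-\alpha\my_k)$ and $\my_k$ contains $G(\mx_k,\boldsymbol{\xi}_k)$, so $\boldsymbol{\xi}_k$ is correlated with $\mx_{k+1}$, and the cross terms $\bE[\langle\nabla F(\mx_{k+1}),\nabla F(\mx_k)-G(\mx_k,\boldsymbol{\xi}_k)\rangle\mid\mathcal{F}_k]$ and $\bE[\langle\mW\my_k-\mathbf{1}\oy_k,\nabla F(\mx_k)-G(\mx_k,\boldsymbol{\xi}_k)\rangle\mid\mathcal{F}_k]$ do not vanish. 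The paper devotes Lemma \ref{lem: 3 nablas} and Lemma \ref{lem: Vy_k-oy_k, 4 nablas} to bounding them by $\alpha Ln\sigma^2$ and $\sigma^2$, exploiting that the dependence of $\mx_{k+1}$ on $\xi_{i,k}$ enters only through the $-\alpha w_{ii}g_i$ term; these contributions are precisely what produce the factor $2(\alpha L+1)(n+1)$ in $M_\sigma$. As written, your step asserts a false independence and cannot reproduce the stated $M_\sigma$.

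Second, your proposed certificate $\mA\mathbf{u}\le\tfrac{\Gamma}{\Gamma+1}\mathbf{u}$ for some positive $\mathbf{u}$ cannot exist: the first row gives $(\mA\mathbf{u})_1\ge(1-\alpha\mu)u_1$, so any componentwise contraction factor is at least $1-\alpha\mu$, which exceeds $\tfrac{\Gamma}{\Gamma+1}$ whenever $\alpha<\tfrac{1}{(\Gamma+1)\mu}$---i.e., in essentially the entire regime the theorem covers (the paper shows $\rho(\mA)\in(1-\alpha\mu,1)$, and Corollary \ref{cor: speed} obtains only $\rho(\mA)\le1-\tfrac{\Gamma-1}{\Gamma+1}\alpha\mu$). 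A Perron--Frobenius test-vector argument can be repaired by targeting a contraction factor of the form $1-\Theta(\alpha\mu)$, but this change matters for the error bounds as well: the leading term $\tfrac{(\Gamma+1)}{\Gamma}\tfrac{\alpha\sigma^2}{\mu n}$ of (\ref{error_bound_ultimate}) arises from the $\tfrac{1}{\alpha\mu}$ amplification of $\tfrac{\alpha^2\sigma^2}{n}$ in the first coordinate when inverting $\mI-\mA$, which a uniform amplification by $\Gamma+1$ would not produce. The paper instead certifies $\rho(\mA)<1$ via the determinant criterion of Lemma \ref{lem: rho_M} and obtains the limits from the explicit adjugate of $\mI-\mA$ together with the lower bound $\mathrm{det}(\mI-\mA)\ge\tfrac{\Gamma-1}{\Gamma+1}(1-a_{11})(1-a_{22})(1-a_{33})$; you would need to carry out an equivalent computation once the test vector is corrected.
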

\begin{remark}
	The first term on the right-hand side of (\ref{error_bound_ultimate}) can be interpreted as the error caused by stochastic gradients only, since it does not depend on the network topology. 
	The second term 
	on the right hand side of~(\ref{error_bound_ultimate}) and the bound in (\ref{consensus_error_bound_ultimate}) are network dependent, and they increase with $\rho_w$ (larger $\rho_w$ indicates worse network connectivity).
	
	In light of (\ref{error_bound_ultimate}) and (\ref{consensus_error_bound_ultimate}), we have
	\begin{equation*}
	\limsup_{k\rightarrow\infty}\bE[\|\ox_k-x^*\|^2]=\alpha\mathcal{O}\left(\frac{\sigma^2}{\mu n}\right)+\frac{\alpha^2}{(1-\rho_w)^3}\mathcal{O}\left(\frac{ L^2\sigma^2}{\mu^2}\right),
	\end{equation*}
	and
	\begin{equation*}
	\limsup_{k\rightarrow\infty}\frac{1}{n}\bE[\|\mx_k-\mathbf{1}\ox_k\|^2]=\frac{\alpha^2}{(1-\rho_w)^3}\mathcal{O}\left(\sigma^2\right)+\frac{\alpha^4}{(1-\rho_w)^3}\mathcal{O}\left(\frac{ L^3\sigma^2}{\mu n}\right).
	\end{equation*}
	Let $\frac{1}{n}\bE[\|\mx_k-\mathbf{1}x^*\|^2]$ measure the average quality of solutions obtained by all the agents. We have
	\begin{multline}
	\label{limsup_x-x*}
	\limsup_{k\rightarrow\infty}\frac{1}{n}\bE[\|\mx_k-\mathbf{1}x^*\|^2]=\limsup_{k\rightarrow\infty}\bE[\|\ox_k-x^*\|^2]+\limsup_{k\rightarrow\infty}\frac{1}{n}\bE[\|\mx_k-\mathbf{1}\ox_k\|^2]\\
	=\alpha\mathcal{O}\left(\frac{\sigma^2}{\mu n}\right)+\frac{\alpha^2}{(1-\rho_w)^3}\mathcal{O}\left(\frac{ L^2\sigma^2}{\mu^2}\right),
	\end{multline}
	which is decreasing in the network size $n$ when $\alpha$ is sufficiently small, i.e, when
		\begin{equation*}
		\alpha\frac{\sigma^2}{\mu n}\sim \frac{\alpha^2}{(1-\rho_w)^3}\frac{ L^2\sigma^2}{\mu^2},\quad \text{or}\quad
		\alpha\sim \frac{\mu}{L^2}\frac{(1-\rho_w)^3}{n}.
		\end{equation*}
		
		The spectral gap $1-\rho_w$ depends on the graph topology as well as the choices of weights $[w_{ij}]$. For example, suppose the Lazy Metropolis rule is adopted (see \cite{nedic2018network}). In this case, if the communication graph is a 1) path or ring graph, then $1-\rho_w=\mathcal{O}(1/n^2)$; 2) lattice, then $1-\rho_w=\mathcal{O}(1/n)$; 3) geometric random graph, then $1-\rho_w=\mathcal{O}(1/n\log n)$; 4) Erd\H{o}s-R{\'e}nyi random graph, then $1-\rho_w=\mathcal{O}(1)$; 5) complete graph, then $1-\rho_w=1$; 6) any connected undirected graph, then $1-\rho_w=\mathcal{O}(1/n^2)$.
		
		From the above argument, the condition $\alpha\sim\frac{\mu}{L^2}\frac{(1-\rho_w)^3}{n}$ is in general more strict than (\ref{alpha_ultimate_bound}) in Theorem \ref{Theorem1}, which requires $\alpha\sim \frac{(1-\rho_w)^2}{L}$ (when $1-\rho_w\le(\frac{\mu}{L})^{2/3}$). Such a difference suggests that when implementing DSGT in practice, it can be advantageous to use a larger stepsize in the early stage to achieve a faster convergence speed (see Corollary \ref{cor: speed} below) and then switch to smaller stepsizes for more accuracy on the final solutions.
\end{remark}

\begin{remark}
	Under a centralized stochastic gradient (CSG) algorithm in the form of
	\begin{equation}
	\label{eq: centralized}
	x_{k+1}=x_k-\alpha \frac{1}{n}\sum_{i=1}^n g_i(x_k,\xi_{i,k}),\ \ k\in\mathbb{N},
	\end{equation}
	we would obtain
	\begin{equation*}
	\limsup_{k\rightarrow\infty}\bE[\|x_k-x^*\|^2]=\alpha\mathcal{O}\left(\frac{\sigma^2}{\mu n}\right).
	\end{equation*}
	It can be observed that DSGT is comparable with CSG in their ultimate error bounds (up to constant factors) with sufficiently small stepsizes.
\end{remark}

As shown in Theorem \ref{Theorem1}, the convergence rate of DSGT is determined by the spectral radius $\rho(\mA)<1$. In the corollary below we provide an upper bound of $\rho(\mA)$.
\begin{corollary}
	\label{cor: speed}
	Under the conditions in Theorem {\ref{Theorem1}}, assuming in addition that the stepsize $\alpha$ also satisfies\footnote{This condition is weaker than (\ref{alpha_ultimate_bound}) when $\rho_w^2\ge \frac{\Gamma}{\Gamma+1}\frac{2\mu}{3L}$.}
	\begin{equation}
	\label{alpha condition corollary}
	\alpha\le \frac{(\Gamma+1)}{\Gamma}\frac{(1-\rho_w^2)}{8\mu},
	\end{equation}
	we have
	\begin{equation*}
	\rho(\mA)\le 1-\left(\frac{\Gamma-1}{\Gamma+1}\right)\alpha\mu.
	\end{equation*}
\end{corollary}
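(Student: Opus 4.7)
The plan is to use the classical Collatz--Wielandt style bound for nonnegative matrices: if $\mathbf{A}\ge 0$ and there exists a strictly positive vector $\delta>0$ with $\mathbf{A}\delta\le\lambda\delta$ componentwise, then $\rho(\mathbf{A})\le\lambda$. Equivalently, one conjugates $\mathbf{A}$ by $\mathbf{D}=\mathrm{diag}(\delta)$ and bounds the infinity-norm of $\mathbf{D}^{-1}\mathbf{A}\mathbf{D}$. A preliminary check is that $\mathbf{A}$ is nonnegative: the only non-obviously nonnegative entry is $(1/\beta+2)\|\mathbf{W}-\mathbf{I}\|^2 L^2+3\alpha L^3$, and one verifies $\beta>0$ using the first clause of (\ref{alpha_ultimate_bound}), namely $\alpha\le (1-\rho_w^2)/(12\rho_w L)$, which forces $4\alpha L+2\alpha^2 L^2<(1-\rho_w^2)/(2\rho_w^2)$.

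Write $c=\frac{\Gamma-1}{\Gamma+1}$, so $1-c=\frac{2}{\Gamma+1}$, and set $\lambda=1-c\alpha\mu$. Unfolding $\mathbf{A}\delta\le\lambda\delta$ yields three inequalities, of which the first two rows are
\begin{equation*}
\frac{\alpha L^2(1+\alpha\mu)}{\mu n}\delta_2\le (1-c)\alpha\mu\,\delta_1,\qquad
\alpha^2\frac{(1+\rho_w^2)\rho_w^2}{1-\rho_w^2}\delta_3\le \Bigl[\tfrac{1-\rho_w^2}{2}-c\alpha\mu\Bigr]\delta_2,
\end{equation*}
and the third row reads
\begin{equation*}
2\alpha n L^3\delta_1+\Bigl[\bigl(\tfrac{1}{\beta}+2\bigr)\|\mathbf{W}-\mathbf{I}\|^2 L^2+3\alpha L^3\Bigr]\delta_2 \le \Bigl[\tfrac{1-\rho_w^2}{2}-c\alpha\mu\Bigr]\delta_3.
\end{equation*}
The added hypothesis (\ref{alpha condition corollary}) is used precisely here to guarantee the margin $\tfrac{1-\rho_w^2}{2}-c\alpha\mu>0$; in fact it gives $\tfrac{1-\rho_w^2}{2}-c\alpha\mu\ge \tfrac{(3\Gamma+1)}{8\Gamma}(1-\rho_w^2)$, a useful quantitative lower bound.

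The natural choice is to saturate the first two rows: set
\begin{equation*}
\delta_1=\frac{L^2(1+\alpha\mu)}{(\Gamma+1)\mu^2 n},\qquad \delta_2=1,\qquad
\delta_3=\frac{\alpha^2(1+\rho_w^2)\rho_w^2}{(1-\rho_w^2)\bigl[\tfrac{1-\rho_w^2}{2}-c\alpha\mu\bigr]}.
\end{equation*}
The two candidate ratios $\delta_1/\delta_2$ and $\delta_3/\delta_2$ are precisely the orders of magnitude that appear in the ultimate error bounds (\ref{error_bound_ultimate})--(\ref{consensus_error_bound_ultimate}) of Theorem \ref{Theorem1}, which is not a coincidence: that theorem is proved by iterating $\mathbf{A}$, so its fixed-point structure dictates the correct eigendirection.

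The entire difficulty is then concentrated in verifying the third-row inequality, which is where the remaining clauses of (\ref{alpha_ultimate_bound}) get consumed. The three additive contributions on the left distribute as follows: the $2\alpha n L^3\delta_1\sim \alpha L^5/\mu^2$ term is controlled against $\tfrac12\bigl[\tfrac{1-\rho_w^2}{2}-c\alpha\mu\bigr]\delta_3\sim \alpha^2(1-\rho_w^2)\rho_w^2/(1-\rho_w^2)^2$ precisely by the third clause $\alpha\le \tfrac{1-\rho_w^2}{3\rho_w^{2/3}L}\bigl[\tfrac{\mu^2(\Gamma-1)}{L^2\Gamma(\Gamma+1)}\bigr]^{1/3}$; the $(1/\beta+2)\|\mathbf{W}-\mathbf{I}\|^2 L^2\delta_2$ term is controlled by the second clause $\alpha\le \tfrac{(1-\rho_w^2)^2}{2\sqrt{\Gamma}L\max\{6\rho_w\|\mathbf{W}-\mathbf{I}\|,1-\rho_w^2\}}$ together with the bound $1/\beta+2=\mathcal{O}(\rho_w^2/(1-\rho_w^2))$ coming from the first clause; and the residual $3\alpha L^3\delta_2$ is a lower-order term absorbed by the same second clause. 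The main obstacle is the bookkeeping required to package these three cube-root/square-root stepsize bounds into the single unified inequality of the third row; no new idea is needed beyond careful case-splitting of the budget $\tfrac{1-\rho_w^2}{2}-c\alpha\mu$ into three pieces, one per summand on the left.
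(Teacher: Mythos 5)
Your overall strategy---exhibiting a positive vector $\delta$ with $\mA\delta\le\lambda\delta$ to conclude $\rho(\mA)\le\lambda$---is sound, and in fact, once the saturating choices are made correctly, it is not a different proof at all: with $\delta_2=1$, $\delta_1=a_{12}/(\lambda-a_{11})$ and $\delta_3=(\lambda-a_{22})/a_{23}$, the third-row inequality multiplied through by $a_{23}(\lambda-a_{11})$ is literally $(\lambda-a_{11})(\lambda-a_{22})(\lambda-a_{33})-(\lambda-a_{11})a_{23}a_{32}-a_{12}a_{23}a_{31}\ge 0$, i.e.\ $\mathrm{det}(\lambda\mI-\mA)\ge 0$ at $\lambda=1-\frac{\Gamma-1}{\Gamma+1}\alpha\mu$, which is exactly what the paper verifies using (\ref{alpha,beta}), (\ref{alpha last condition}) and the extra condition (\ref{alpha condition corollary}). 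So the approach buys nothing new; it repackages the determinant computation.

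The genuine problems are in the execution. First, both components of your test vector are wrong. Saturating row 1 forces $\delta_1=\frac{a_{12}}{(1-c)\alpha\mu}=\frac{(\Gamma+1)L^2(1+\alpha\mu)}{2\mu^2 n}$, whereas you wrote $\frac{L^2(1+\alpha\mu)}{(\Gamma+1)\mu^2 n}$; with your (smaller) value, row 1 of $\mA\delta\le\lambda\delta$ fails for every $\Gamma>1$. Worse, your $\delta_3$ is the \emph{reciprocal} of the saturating value $\frac{(\lambda-a_{22})(1-\rho_w^2)}{\alpha^2(1+\rho_w^2)\rho_w^2}$: you need $\delta_3$ as large as row 2 permits because it sits on the right-hand side of row 3, but your $\delta_3=\mathcal{O}(\alpha^2)$ makes the right-hand side of row 3 of order $\alpha^2$ while the left-hand side contains the $\alpha$-independent term $a_{32}\ge 2\|\mW-\mI\|^2L^2$, so row 3 fails outright for small $\alpha$. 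Second, even granting the corrected $\delta$, the entire content of the corollary is the row-3 verification, which you defer to ``bookkeeping''; the nontrivial step there is showing that $(\lambda-a_{22})(\lambda-a_{33})=\left(\frac{1-\rho_w^2}{2}-c\alpha\mu\right)^2$ is not too much smaller than $(1-a_{22})(1-a_{33})=\frac{(1-\rho_w^2)^2}{4}$, which is precisely where (\ref{alpha condition corollary}) enters via the bound $\frac{1-\rho_w^2}{2}-c\alpha\mu\ge\frac{(3\Gamma+1)(1-\rho_w^2)}{8\Gamma}$ (you state this bound, correctly, but never combine it with (\ref{alpha,beta}) and (\ref{alpha last condition}) to close the inequality). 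As written, the proposal does not establish the claim.
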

Corollary \ref{cor: speed} implies that, for sufficiently small stepsizes, 
the distributed gradient tracking method has a comparable convergence speed to that of a centralized scheme (in which case the linear rate is $\mathcal{O}((1-2\alpha\mu)^k)$).

In the next theorem, we show DSGT achieves the $\mathcal{O}(\frac{1}{k})$ rate of convergence under a diminishing stepsize policy.
\begin{theorem}
	\label{theorem2}
	Let Assumptions \ref{asp: gradient samples}-\ref{asp: strconvexity} hold. 
	Consider the method in~\eqref{eq: x_i,k} where $\alpha$ is replaced with the time-varying stepsize
	$\alpha_k$  given by
	$\alpha_k:=\theta/(m+k)$, where $\theta>1/\mu$ and $m$ satisfies
	\begin{equation}
	\label{m: condition}
	\begin{cases}
	m>\max\left\{\frac{\theta}{2}(\mu+L), \frac{4\theta L\rho_w^2+2\theta L\rho_w\sqrt{1+3\rho_w^2}}{1-\rho_w^2}\right\},\\
	\frac{(1-\rho_w^2)^2}{\theta^2(1+\rho_w^2)\rho_w^2}\left[\frac{(1-\rho_w^2)}{2}-\frac{2m+1}{(m+1)^2}\right]> \frac{1}{(\theta\mu-1)}\left(\frac{1}{\mu}+\frac{\theta}{m}\right) \frac{4\theta^2 L^5}{m^3}+\frac{2C}{m^2},
	\end{cases}
	\end{equation}
	with $C=\left[\left(\frac{1-\rho_w^2}{2\rho_w^2}-\frac{4\theta L}{m}-\frac{2\theta^2 L^2}{m^2}\right)^{-1}+2\right]\|\mW-\mI\|^2 L^2+\frac{3\theta L^3}{m}.$
	Then for all $k\ge 0$, we have
	\begin{subequations}
		\label{SA_rate}
		\begin{align}
		& \bE[\|\ox_{k}-x^*\|^2]
		\le \frac{2\theta^2 \sigma^2}{n(\theta\mu-1)(m+k)}+\frac{\mathcal{O}_k(1)}{(m+k)^{\theta\mu}}+\frac{\mathcal{O}_k(1)}{(m+k)^2},\\
		& \bE[\|\mx_{k}-\mathbf{1}\ox_{k}\|^2]\le \frac{\mathcal{O}_k(1)}{(m+k)^2},
		\end{align}
	\end{subequations}
where $\mathcal{O}_k(1)$ denotes some constant that does not depend on $k$.
\end{theorem}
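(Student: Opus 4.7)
The plan is to adapt the coupled linear recursions underlying Theorem~\ref{Theorem1} to the time-varying stepsize $\alpha_k=\theta/(m+k)$, and then close out the claimed convergence rates by a joint induction on the three sequences $\bE[\|\ox_k-x^*\|^2]$, $\bE[\|\mx_k-\mathbf{1}\ox_k\|^2]$, and $\bE[\|\my_k-\mathbf{1}\oy_k\|^2]$.

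First, I would revisit the three scalar recursions that produce the matrix $\mA$ in Theorem~\ref{Theorem1}. Each of those bounds is derived pointwise in $\alpha$---via strong convexity, Lipschitz gradients, the contraction $\|\mW\omega-\mathbf{1}\overline{\omega}\|\le\rho_w\|\omega-\mathbf{1}\overline{\omega}\|$ from Lemma~\ref{lem: spectral norm}, and the noise moment bound in Assumption~\ref{asp: gradient samples}---so the same manipulations go through after substituting $\alpha_k$ for $\alpha$, producing
\begin{align*}
\bE[\|\ox_{k+1}-x^*\|^2] &\le (1-\alpha_k\mu)\bE[\|\ox_k-x^*\|^2]+\tfrac{\alpha_k L^2(1+\alpha_k\mu)}{\mu n}\bE[\|\mx_k-\mathbf{1}\ox_k\|^2]+\tfrac{\alpha_k^2\sigma^2}{n},\\
\bE[\|\mx_{k+1}-\mathbf{1}\ox_{k+1}\|^2] &\le \tfrac{1+\rho_w^2}{2}\bE[\|\mx_k-\mathbf{1}\ox_k\|^2]+\alpha_k^2\tfrac{(1+\rho_w^2)\rho_w^2}{1-\rho_w^2}\bE[\|\my_k-\mathbf{1}\oy_k\|^2],\\
\bE[\|\my_{k+1}-\mathbf{1}\oy_{k+1}\|^2] &\le 2\alpha_k nL^3\bE[\|\ox_k-x^*\|^2]+C_k\bE[\|\mx_k-\mathbf{1}\ox_k\|^2]+\tfrac{1+\rho_w^2}{2}\bE[\|\my_k-\mathbf{1}\oy_k\|^2]+D_k\sigma^2,
\end{align*}
where $C_k=(1/\beta_k+2)\|\mW-\mI\|^2L^2+3\alpha_k L^3$ with $\beta_k=(1-\rho_w^2)/(2\rho_w^2)-4\alpha_k L-2\alpha_k^2 L^2$, and $D_k$ is the time-varying analogue of the $M_\sigma$-type noise coefficient in \eqref{M_sigma}. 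The first line of~\eqref{m: condition} ensures $\beta_k>0$ and $C_k\le C$ uniformly in $k$.

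Second, I would prove by induction on $k$ that there exist constants $W,W',U,V$ (independent of $k$) with
\begin{equation*}
\bE[\|\mx_k-\mathbf{1}\ox_k\|^2]\le\tfrac{W}{(m+k)^2},\qquad\bE[\|\my_k-\mathbf{1}\oy_k\|^2]\le\tfrac{W'}{(m+k)^2},
\end{equation*}
and
\begin{equation*}
\bE[\|\ox_k-x^*\|^2]\le\tfrac{2\theta^2\sigma^2}{n(\theta\mu-1)(m+k)}+\tfrac{V}{(m+k)^{\theta\mu}}+\tfrac{U}{(m+k)^2}.
\end{equation*}
The base case is handled by inflating $W,W',U,V$ enough to dominate the initial values at $k=0$. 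For the induction step, substituting the hypothesis into the three recursions and using the elementary identity $(1-\tfrac{\theta\mu}{m+k})\tfrac{1}{m+k}=\tfrac{1}{m+k+1}-\tfrac{\theta\mu-1}{(m+k)(m+k+1)}+\mathcal{O}((m+k)^{-3})$ reduces the task to absorbing residual $\mathcal{O}((m+k)^{-3})$ terms back into the right-hand sides. The second inequality in~\eqref{m: condition} is precisely the quantitative threshold that makes this absorption feasible, by forcing the cross-coupling through $\alpha_k^2(1+\rho_w^2)\rho_w^2/(1-\rho_w^2)$ and $2\alpha_k nL^3$ to stay strictly below the contraction budget left over from $(1+\rho_w^2)/2$ and $1-\alpha_k\mu$.

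The main obstacle will be the cyclic coupling among the three sequences: the $\mx$-consensus error is driven by the $\my$-tracking error, the $\my$-tracking error is driven by both the optimality gap and the $\mx$-consensus error, and the optimality gap itself is fed by the $\mx$-consensus error. To close the induction I expect to fix the constants in the order $W'\to W\to V\to U$: solve the $\my$-recursion for $W'$ as a linear function of $W$ and the optimality-gap constants, then the $\mx$-recursion for $W$ as a linear function of $W'$, and finally read off $V,U$ from the standard Robbins--Monro-type analysis of the $\ox$-recursion. The second condition in~\eqref{m: condition} is exactly what guarantees that this system of constraints admits finite solutions (each step introduces a contraction strictly smaller than one). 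Once the induction closes, the dominant $\tfrac{2\theta^2\sigma^2}{n(\theta\mu-1)(m+k)}$ term in $\bE[\|\ox_k-x^*\|^2]$ arises from the $\alpha_k^2\sigma^2/n$ forcing, the $(m+k)^{-\theta\mu}$ term is the homogeneous solution of the strongly-convex SA recursion, and the $(m+k)^{-2}$ term comes from the propagation of the consensus error through the cross term $\tfrac{\alpha_k L^2(1+\alpha_k\mu)}{\mu n}\cdot \tfrac{W}{(m+k)^2}$.
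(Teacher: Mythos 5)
Your overall architecture---rewriting the three coupled recursions with $\alpha_k$ in place of $\alpha$ and closing a joint induction---matches the paper's strategy, and your bounds for $\bE[\|\ox_k-x^*\|^2]$ and $\bE[\|\mx_k-\mathbf{1}\ox_k\|^2]$ are the right targets. However, there is a genuine gap in your induction ansatz for the gradient-tracking error: you posit $\bE[\|\my_k-\mathbf{1}\oy_k\|^2]\le W'/(m+k)^2$, and this is false in general. The forcing term in the $\my$-recursion is the time-varying analogue of $M_\sigma$ in \eqref{M_sigma}, namely $M_k=\left[3\alpha_k^2L^2+2(\alpha_k L+1)(n+1)\right]\sigma^2$, which contains the non-vanishing contribution $2(n+1)\sigma^2$ coming from the fresh stochastic gradient $g_i(x_{i,k+1},\xi_{i,k+1})-g_i(x_{i,k},\xi_{i,k})$ injected at every iteration. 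Since $y_{i,k}$ tracks the average of \emph{stochastic} gradients, whose variance persists even at the optimum, $\bE[\|\my_k-\mathbf{1}\oy_k\|^2]$ converges only to an $\mathcal{O}(n\sigma^2)$ neighborhood of zero; it does not decay. Your induction step for $W'$ would fail immediately, because the constant $2(n+1)\sigma^2$ cannot be absorbed into a $W'/(m+k+1)^2$ bound. The correct ansatz, used in the paper and recorded in Corollary~\ref{cor:SA}, is $\bE[\|\my_k-\mathbf{1}\oy_k\|^2]\le\hat{Y}$ for a constant $\hat{Y}$; the consensus bound $\hat{X}/(m+k)^2$ still closes because the coupling coefficient into the $\mx$-recursion is $\alpha_k^2(1+\rho_w^2)\rho_w^2/(1-\rho_w^2)=\mathcal{O}(1/(m+k)^2)$, so $\alpha_k^2\hat{Y}$ already has the right order.

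A secondary, more stylistic difference: you attempt to induct directly on the three-term bound $\frac{2\theta^2\sigma^2}{n(\theta\mu-1)(m+k)}+\frac{V}{(m+k)^{\theta\mu}}+\frac{U}{(m+k)^2}$ for the optimality gap, which requires delicate bookkeeping to preserve the exact leading constant through the step $(1-\frac{\theta\mu}{m+k})$. The paper instead proves a cruder bound $\hat{U}/(m+k)$ by induction (jointly with $\hat{X}$, $\hat{Y}$), and only afterwards unrolls the scalar recursion $U_{k+1}\le(1-\frac{\theta\mu}{m+k})U_k+\frac{2\theta L^2\hat{X}}{\mu n(m+k)^3}+\frac{\theta^2\sigma^2}{n(m+k)^2}$ using a product estimate of the form $\prod_{j=t+1}^{k-1}(1-\frac{\theta\mu}{m+j})\le\frac{(m+t+1)^{\theta\mu}}{(m+k)^{\theta\mu}}$ to extract the sharp constant and the $(m+k)^{-\theta\mu}$ and $(m+k)^{-2}$ remainder terms. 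Once you repair the $\my$ ansatz, adopting this two-stage refinement would also spare you the one-shot induction on the refined bound.
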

\begin{remark}
	From (\ref{SA_rate}), noting that $\theta\mu>1$, we have
	\begin{equation*}
	\frac{1}{n}\bE[\|\mx_{k}-\mathbf{1}x^*\|^2]=\bE[\|\ox_{k}-x^*\|^2]+ \frac{1}{n}\bE[\|\mx_{k}-\mathbf{1}\ox_{k}\|^2]=\mathcal{O}_k\left(\frac{1}{k}\right).
	\end{equation*}
	In particular, $\frac{1}{n}\bE[\|\mx_{k}-\mathbf{1}x^*\|^2]$ asymptotically converges to $0$ at the rate $\frac{2\theta^2 \sigma^2}{n(\theta\mu-1)k}$, which does not depend on the spectral norm $\rho_w$ and matches the convergence rate (up to constant factors) for centralized stochastic gradient methods (see \cite{nemirovski2009robust,rakhlin2012making})\footnote{It is worth noting that the transient time for $\frac{1}{n}\bE[\|\mx_{k}-\mathbf{1}x^*\|^2]$ to approach the asymptotic convergence rate depends on the network topology, and its dependence on $n$ can be significant.}.
	\end{remark}
\begin{remark}
	Under a well connected network ($\rho_w\simeq 0$), $m>\frac{\theta}{2}(\mu+L)$ suffices. When $\rho_w\simeq 1$, the lower bound of $m$ is in the order of $\mathcal{O}((1-\rho_w)^{-2})$. 
\end{remark}
Theorem \ref{theorem2} implies the following result if we choose $\alpha_k=\theta/(k+1)$.
\begin{corollary}
	\label{cor:SA}
	Under Assumptions \ref{asp: gradient samples}-\ref{asp: strconvexity} and stepsize policy $\alpha_k:=\theta/(k+1)$ for some $\theta>1/\mu$, we have for all $k\ge m$ where $m$ satisfies condition (\ref{m: condition}), 
	\begin{equation*}
	\bE[\|\ox_{k}-x^*\|^2]\le \frac{\tilde{U}}{k}, \ \ \bE[\|\mx_{k}-\mathbf{1}\ox_{k}\|^2]\le \frac{\tilde{X}}{k^2},\ \ \bE[\|\my_{k}-\mathbf{1}\oy_{k}\|^2]\le \tilde{Y},
	\end{equation*}
	where $\tilde{U}$, $\tilde{X}$, and $\tilde{Y}$ are some positive constants.
\end{corollary}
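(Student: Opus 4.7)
The plan is to derive this as a direct consequence of Theorem \ref{theorem2} via a time-shift, supplemented by one additional step to extract the $\my$-consensus bound. First I would note that the stepsize $\alpha_k = \theta/(k+1)$ of the corollary matches Theorem \ref{theorem2}'s stepsize $\theta/(m+j)$ under the substitution $j = k - m + 1$. Equivalently, for $k \ge m$ the iterates $(\mx_k, \my_k)$ from (\ref{eq: x_k}) coincide with iterates of DSGT initialized at the random state $(\mx_{m-1}, \my_{m-1})$ and run with stepsize $\tilde\alpha_j = \theta/(m+j)$, which is exactly Theorem \ref{theorem2}'s setup. The consistency property $\oy_k = \frac{1}{n}\mathbf{1}^{\T} G(\mx_k, \boldsymbol{\xi}_k)$ is preserved by the shift, and $\mx_{m-1}, \my_{m-1}$ are measurable functions of finitely many samples with bounded variance (Assumption \ref{asp: gradient samples}), so their second moments are finite. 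Applying Theorem \ref{theorem2} to the shifted sequence therefore yields $\bE[\|\ox_k - x^*\|^2] \le \tilde U/k$ and $\bE[\|\mx_k - \mathbf{1}\ox_k\|^2] \le \tilde X/k^2$, where $\tilde U$ and $\tilde X$ absorb all lower-order terms in (\ref{SA_rate}) together with the (finite) random initial moments.

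For the third bound, I would project the $\my$-update in (\ref{eq: x_k}) onto the subspace orthogonal to $\mathbf{1}$ using Lemma \ref{lem: spectral norm}, split the stochastic-gradient-difference driving term via Young's inequality into a Lipschitz piece controlled by $L^2 \bE[\|\mx_{k+1}-\mx_k\|^2]$ and a pure-noise piece controlled by $\sigma^2$ via Assumption \ref{asp: gradient samples}, and arrive at
\begin{equation*}
\bE[\|\my_{k+1}-\mathbf{1}\oy_{k+1}\|^2] \le \tfrac{1+\rho_w^2}{2}\,\bE[\|\my_k - \mathbf{1}\oy_k\|^2] + c_1\,\bE[\|\mx_{k+1}-\mx_k\|^2] + c_2\sigma^2.
\end{equation*}
Expanding $\mx_{k+1}-\mx_k = (\mW-\mI)\mx_k - \alpha_k \mW\my_k$ bounds $\bE[\|\mx_{k+1}-\mx_k\|^2]$ by a sum of $\mathcal{O}(\bE[\|\mx_k - \mathbf{1}\ox_k\|^2])$, $\alpha_k^2\,\mathcal{O}(\bE[\|\my_k - \mathbf{1}\oy_k\|^2])$, and $\alpha_k^2\,\mathcal{O}(\bE[\|\oy_k\|^2])$. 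The first is $\mathcal{O}(1/k^2)$ by Step~1, $\alpha_k^2$ is $\mathcal{O}(1/k^2)$, and $\bE[\|\oy_k\|^2]$ is uniformly bounded since $\bE[\|\oy_k\|^2] \le 2\bE[\|h(\mx_k)\|^2] + 2\sigma^2/n$ with $\bE[\|h(\mx_k)\|^2] \le (L^2/n)\,\bE[\|\mx_k-\mathbf{1}x^*\|^2] = \mathcal{O}(1)$ by Lipschitz gradients, $\nabla f(x^*) = 0$, and Step~1.

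The main obstacle is the self-coupling in Step~2: the bound on $\bE[\|\my_k - \mathbf{1}\oy_k\|^2]$ picks up a feedback copy of itself through the $\alpha_k^2$ factor inside $\bE[\|\mx_{k+1}-\mx_k\|^2]$. This is benign because the feedback coefficient is $\mathcal{O}(1/k^2)$, which is summable, so the resulting scalar recursion takes the form $u_{k+1} \le \tilde\rho_k u_k + b_k$ with $\limsup_k \tilde\rho_k < 1$ (after enlarging $m$ if necessary) and $\sup_k b_k < \infty$. A direct induction, or equivalently a comparison against a scalar linear recursion with uniformly bounded forcing, then yields $\sup_k \bE[\|\my_k - \mathbf{1}\oy_k\|^2] =: \tilde Y < \infty$, completing the proof.
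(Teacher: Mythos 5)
Your proposal is correct, and for the first two bounds it follows exactly the paper's (implicit) route: the paper proves the corollary simply by invoking Theorem \ref{theorem2} under the re-indexing $k+1 = m+j$, which is precisely your time-shift argument; your observations that the invariant $\oy_k=\frac{1}{n}\mathbf{1}^{\T}G(\mx_k,\boldsymbol{\xi}_k)$ survives the shift and that the shifted initial moments are finite are the right (and only) technical points to check there. Where you diverge is the third bound: the paper obtains $\bE[\|\my_k-\mathbf{1}\oy_k\|^2]\le\tilde Y$ for free, because the induction hypothesis (\ref{Induction_assumption}) in the proof of Theorem \ref{theorem2} already carries $Y_k\le\hat Y$ alongside the $U_k$ and $X_k$ bounds, so one just reads it off the same induction. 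You instead re-derive a standalone contraction $u_{k+1}\le(\tfrac{1+\rho_w^2}{2}+\mathcal{O}(\alpha_k^2))u_k+b_k$ with bounded forcing and conclude $\sup_k u_k<\infty$; this is valid (your handling of the $\mathcal{O}(1/k^2)$ self-feedback and of $\bE[\|\oy_k\|^2]$ via $\nabla f(x^*)=0$ is sound) and is arguably the cleaner route if one insists on treating Theorem \ref{theorem2} as a black box, since its statement (\ref{SA_rate}) does not mention the $\my$-consensus error. The only cost of your route is redundancy: you reprove a weaker version of inequality (\ref{Third_Main_Inquality}) that the paper already has in Lemma \ref{lem: Main_Inequalities} and already exploits inside the induction.
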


\section{Analysis}
\label{sec:cohesion}

In this section, we prove Theorem \ref{Theorem1} by studying the evolution of $\bE[\|\ox_k-x^*\|^2]$, $\bE[\|\mx_k-\mathbf{1}\ox_k\|^2]$ and $\bE[\|\my_k-\mathbf{1}\oy_k\|^2]$. Our strategy is to bound the three expressions in terms of linear combinations of their past values, in which way we establish a linear system of inequalities. This approach is different from those employed in \cite{qu2017harnessing,nedic2017achieving}, where the analyses pertain to the examination of $\|\ox_k-x^*\|$, $\|\mx_k-\mathbf{1}\ox_k\|$ and $\|\my_k-\mathbf{1}\oy_k\|$. Such distinction is due to the  stochastic gradients $g_i(x_{i,k},\xi_{i,k})$ whose variances play a crucial role in deriving the main inequalities.

We first introduce some lemmas that will be used later in the analysis. 
Denote by $\mathcal{F}_k$ the $\sigma$-algebra generated by 
$\{\boldsymbol{\xi}_0,\ldots,\boldsymbol{\xi}_{k-1}\}$, and define $\bE[\cdot \mid\mathcal{F}_k]$ 
as the conditional expectation given $\mathcal{F}_k$.
\begin{lemma}
	\label{lem: oy_k-h_k}
	Under Assumption \ref{asp: gradient samples}, recalling that 
	$h(\mx)= \frac{1}{n}\mathbf{1}^{\T}\nabla F(\mx)$,
	we have for all $k\ge0$,
	\begin{align}
	\bE\left[\|\oy_k-h(\mx_k)\|^2\mid\mathcal{F}_k\right] \le \frac{\sigma^2}{n}.
	\end{align}
\end{lemma}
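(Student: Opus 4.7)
The plan is to use the identity already established in the excerpt just above the lemma, namely the induction showing
\[
\oy_k \;=\; \frac{1}{n}\mathbf{1}^{\T} G(\mx_k,\boldsymbol{\xi}_k) \;=\; \frac{1}{n}\sum_{i=1}^n g_i(x_{i,k},\xi_{i,k}).
\]
Since $h(\mx_k) = \frac{1}{n}\sum_{i=1}^n \nabla f_i(x_{i,k})$, I can write
\[
\oy_k - h(\mx_k) \;=\; \frac{1}{n}\sum_{i=1}^n \bigl(g_i(x_{i,k},\xi_{i,k}) - \nabla f_i(x_{i,k})\bigr),
\]
and then compute the conditional expectation of the squared Frobenius/Euclidean norm by expanding into a double sum.

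First I would note that, conditional on $\mathcal{F}_k$, each $x_{i,k}$ is deterministic (it is a function of $\boldsymbol{\xi}_0,\ldots,\boldsymbol{\xi}_{k-1}$). Then, by Assumption~\ref{asp: gradient samples}, each $\xi_{i,k}$ is independent, and for every $i$,
\[
\bE\bigl[g_i(x_{i,k},\xi_{i,k}) - \nabla f_i(x_{i,k}) \,\bigm|\, \mathcal{F}_k\bigr] = 0,
\qquad
\bE\bigl[\|g_i(x_{i,k},\xi_{i,k}) - \nabla f_i(x_{i,k})\|^2 \,\bigm|\, \mathcal{F}_k\bigr] \le \sigma^2.
\]

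Next I would expand the squared norm and separate diagonal from off-diagonal terms:
\[
\bE\bigl[\|\oy_k - h(\mx_k)\|^2 \,\bigm|\, \mathcal{F}_k\bigr]
= \frac{1}{n^2}\sum_{i,j=1}^n \bE\bigl[\langle g_i - \nabla f_i,\, g_j - \nabla f_j\rangle \,\bigm|\, \mathcal{F}_k\bigr],
\]
where I have abbreviated $g_i = g_i(x_{i,k},\xi_{i,k})$ and $\nabla f_i = \nabla f_i(x_{i,k})$. For $i\neq j$, the independence of $\xi_{i,k}$ and $\xi_{j,k}$ (given $\mathcal{F}_k$) together with zero conditional mean kills each off-diagonal term. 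Only the diagonal terms survive, and the variance bound in Assumption~\ref{asp: gradient samples} yields
\[
\bE\bigl[\|\oy_k - h(\mx_k)\|^2 \,\bigm|\, \mathcal{F}_k\bigr]
\le \frac{1}{n^2}\sum_{i=1}^n \sigma^2 \;=\; \frac{\sigma^2}{n},
\]
which is the desired bound.

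The proof is essentially routine; the only place that needs a moment of care is the vanishing of the cross terms, which relies on the independence of $\{\xi_{i,k}\}_{i=1}^n$ across agents at the same time step and on the $\mathcal{F}_k$-measurability of $x_{i,k}$. Both are guaranteed by Assumption~\ref{asp: gradient samples} and the definition of $\mathcal{F}_k$ as the $\sigma$-algebra generated by $\{\boldsymbol{\xi}_0,\ldots,\boldsymbol{\xi}_{k-1}\}$, so no additional hypothesis is required.
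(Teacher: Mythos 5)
Your proof is correct and follows essentially the same route as the paper: both write $\oy_k-h(\mx_k)$ as the average of the per-agent noise terms $g_i(x_{i,k},\xi_{i,k})-\nabla f_i(x_{i,k})$, use conditional independence and zero conditional mean to kill the cross terms, and bound each diagonal term by $\sigma^2$. The paper merely states the resulting single-sum identity without spelling out the vanishing of the off-diagonal terms, which you make explicit.
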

\begin{proof}
	By the definitions of $\oy_k$ and $h(\mx_k)$,
	\begin{equation*}
	\bE\left[\|\oy_k-h(\mx_k)\|^2\mid\mathcal{F}_k\right]\\
	=	\frac{1}{n^2}\sum_{i=1}^n\bE\left[\|g_i(x_{i,k},\xi_{i,k})-\nabla f_i(x_{i,k})\|^2\vert\mathcal{F}_k\right]\le \frac{\sigma^2}{n}.
	\end{equation*}
\end{proof}
\begin{lemma}
	\label{lem: strong_convexity}
	Under Assumption \ref{asp: strconvexity}, we have for all $k\ge0$,
	\begin{align}
	\| \nabla f(\ox_k)-h(\mx_k)\| \le \frac{L}{\sqrt{n}}\|\mx_k-\mathbf{1}\ox_k\|.
	\end{align}
	If in addition $\alpha<2/(\mu+L)$, then
	\begin{equation*}
	\|x-\alpha\nabla f(x)-x^*\|\le (1-\alpha \mu)\|x-x^*\|,\, \forall x\in\mathbb{R}^p.
	\end{equation*}
\end{lemma}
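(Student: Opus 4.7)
The first inequality is a straightforward consequence of Jensen's inequality (or equivalently Cauchy–Schwarz applied to the averaged sum) together with the Lipschitz hypothesis. My plan is to write
\begin{equation*}
\nabla f(\ox_k)-h(\mx_k)=\frac{1}{n}\sum_{i=1}^n\bigl(\nabla f_i(\ox_k)-\nabla f_i(x_{i,k})\bigr),
\end{equation*}
apply $\|\frac{1}{n}\sum_i v_i\|^2\le \frac{1}{n}\sum_i\|v_i\|^2$ (convexity of $\|\cdot\|^2$), then invoke the $L$-Lipschitz bound from Assumption~\ref{asp: strconvexity} on each summand to obtain $\frac{L^2}{n}\sum_{i=1}^n\|\ox_k-x_{i,k}\|^2=\frac{L^2}{n}\|\mx_k-\mathbf{1}\ox_k\|^2$, and finally take the square root.

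For the second inequality, the cleanest route I would take is to decompose $f$ into a quadratic strongly convex part and a convex part with smaller smoothness constant. Specifically, set $g(x):=f(x)-\tfrac{\mu}{2}\|x\|^2$; Assumption~\ref{asp: strconvexity} gives that $g$ is convex with $(L-\mu)$-Lipschitz gradient. Using $\nabla f(x^*)=0$ (so $\nabla g(x^*)=-\mu x^*$), one rewrites
\begin{equation*}
x-\alpha\nabla f(x)-x^*=(1-\alpha\mu)(x-x^*)-\alpha\bigl(\nabla g(x)-\nabla g(x^*)\bigr).
\end{equation*}
Expanding the squared norm, the cross term is controlled by the co-coercivity inequality $\langle \nabla g(x)-\nabla g(x^*),x-x^*\rangle\ge \frac{1}{L-\mu}\|\nabla g(x)-\nabla g(x^*)\|^2$ that holds for convex functions with $(L-\mu)$-Lipschitz gradient. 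Collecting the $\|\nabla g(x)-\nabla g(x^*)\|^2$ terms yields the coefficient $\alpha\bigl(\alpha-\tfrac{2(1-\alpha\mu)}{L-\mu}\bigr)$, which is non-positive exactly when $\alpha\le 2/(\mu+L)$. Discarding that non-positive contribution leaves $(1-\alpha\mu)^2\|x-x^*\|^2$, and taking square roots finishes the claim.

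The main obstacle is the second inequality, because the naive estimate (strong convexity on the cross term plus $\|\nabla f(x)\|\le L\|x-x^*\|$ on the squared term) only yields the coefficient $1-2\alpha\mu+\alpha^2L^2$, and the standard co-coercivity argument applied directly to $f$ yields $1-\frac{2\alpha\mu L}{\mu+L}$; both are strictly larger than $(1-\alpha\mu)^2$ in general, so neither delivers the target rate. The $\mu$-quadratic peel-off above is the trick that tightens the bound to exactly $(1-\alpha\mu)$, matching the tight spectral-radius constant $\max\{|1-\alpha\mu|,|1-\alpha L|\}=1-\alpha\mu$ that one gets in the quadratic case via $x-\alpha\nabla f(x)-x^*=(I-\alpha H)(x-x^*)$ with $H=\int_0^1\nabla^2 f(x^*+t(x-x^*))\,dt$ when $f$ is $C^2$; the peel-off argument removes the need for twice-differentiability.
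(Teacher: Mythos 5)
Your proof is correct. Note that the paper does not actually prove this lemma---it simply cites Lemma~10 of the reference \cite{qu2017harnessing}---so what you have written is a self-contained argument for the cited fact rather than a variant of an in-paper proof. Your first part (write the difference as an average of $\nabla f_i(\ox_k)-\nabla f_i(x_{i,k})$, apply convexity of $\|\cdot\|^2$, then the Lipschitz bound) is exactly the standard computation. Your second part, peeling off the quadratic $\tfrac{\mu}{2}\|x\|^2$ and applying co-coercivity to the residual convex function with $(L-\mu)$-Lipschitz gradient, is the standard tight proof of the contraction factor $\max\{|1-\alpha\mu|,|1-\alpha L|\}$ without assuming twice-differentiability, and your algebra for the coefficient $\alpha\bigl(\alpha-\tfrac{2(1-\alpha\mu)}{L-\mu}\bigr)$ and the threshold $\alpha\le 2/(\mu+L)$ checks out; you also correctly diagnose why co-coercivity applied directly to $f$ only yields the weaker factor $1-\tfrac{2\alpha\mu L}{\mu+L}$. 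Two trivial points worth recording: the sign of the cross term requires $1-\alpha\mu>0$, which follows from $\alpha<2/(\mu+L)\le 1/\mu$, and the degenerate case $L=\mu$ (where $\nabla g$ is constant, so $\nabla g(x)-\nabla g(x^*)=0$) makes the claim immediate without invoking co-coercivity with a zero denominator.
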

\begin{proof}
	See  \cite{qu2017harnessing} Lemma 10 for reference.
\end{proof}

In the following lemma, we establish bounds on $\|\mx_{k+1}-\mathbf{1}\ox_{k+1}\|^2$ and on the conditional expectations of $\|\ox_{k+1}-x^*\|^2$ and $\|\my_{k+1}-\mathbf{1}\oy_{k+1}\|^2$, respectively.
\begin{lemma}
	\label{lem: Main_Inequalities}
	Suppose Assumptions \ref{asp: gradient samples}-\ref{asp: strconvexity} hold and $\alpha<2/(\mu+L)$. We have the following inequalities:
	\begin{equation}
	\label{First_Main_Inequality}
	\bE[\|\ox_{k+1}-x^*\|^2\mid \mathcal{F}_k]
	\le \left(1-\alpha\mu\right)\|\ox_k-x^*\|^2\\
	+\frac{\alpha L^2}{\mu n}\left(1+\alpha\mu\right)\|  \mx_k-\mathbf{1}\ox_k\|^2
	+\frac{\alpha^2\sigma^2}{n},
	\end{equation}
	\begin{equation}
	\label{Second_Main_Inequality}
	\|\mx_{k+1}-\mathbf{1}\ox_{k+1}\|^2
	\le \frac{(1+\rho_w^2)}{2}\|\mx_k-\mathbf{1}\ox_k\|^2\\+\alpha^2\frac{(1+\rho_w^2)\rho_w^2}{(1-\rho_w^2)}\|\my_{k}-\mathbf{1}\oy_k\|^2,
	\end{equation}
	and for any $\beta>0$,
	\begin{multline}
	\label{Third_Main_Inquality}
	\bE[\|\my_{k+1}-\mathbf{1}\oy_{k+1}\|^2\mid \mathcal{F}_k]
	\le \left(1+4\alpha L+2\alpha^2 L^2+\beta\right)\rho_w^2\bE[\|\my_{k}-\mathbf{1}\oy_{k}\|^2\mid \mathcal{F}_k]\\
	+\left(\frac{1}{\beta}\|\mW-\mI\|^2 L^2+2\|\mW-\mI\|^2 L^2+3\alpha L^3\right)\|\mx_k-\mathbf{1}\ox_k\|^2+2\alpha nL^3\|\ox_k-x^*\|^2+M_{\sigma}.
	\end{multline}
\end{lemma}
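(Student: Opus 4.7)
The plan is to derive the three inequalities in turn, each tied to one line of the update~\eqref{eq: x_k}, and to exploit the fact that $\mathbf{1}^{\T}(G(\mx_k,\boldsymbol{\xi}_k)-\nabla F(\mx_k))$ has zero conditional mean to separate the stochastic contributions from the deterministic ones.

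For inequality~\eqref{First_Main_Inequality}, I first average the first line of~\eqref{eq: x_k} using $\mathbf{1}^{\T}\mW=\mathbf{1}^{\T}$ to obtain $\ox_{k+1}=\ox_k-\alpha\oy_k$, and then write
\[
\ox_{k+1}-x^* \;=\; \bigl(\ox_k-\alpha\nabla f(\ox_k)-x^*\bigr)+\alpha\bigl(\nabla f(\ox_k)-h(\mx_k)\bigr)-\alpha\bigl(\oy_k-h(\mx_k)\bigr).
\]
Taking $\bE[\cdot\mid\mathcal{F}_k]$, the last summand is orthogonal to the first two (since $\bE[\oy_k\mid\mathcal{F}_k]=h(\mx_k)$) and contributes exactly $\alpha^2\bE[\|\oy_k-h(\mx_k)\|^2\mid\mathcal{F}_k]\le\alpha^2\sigma^2/n$ via Lemma~\ref{lem: oy_k-h_k}. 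The remaining deterministic piece is split by Young's inequality with the specific parameter $\alpha\mu$ (so that $(1+\alpha\mu)(1-\alpha\mu)^2\le 1-\alpha\mu$), combined with the two estimates from Lemma~\ref{lem: strong_convexity}, producing the exact coefficient $\frac{\alpha L^2(1+\alpha\mu)}{\mu n}$.

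For inequality~\eqref{Second_Main_Inequality}, the identities $(\mW-\mI)\mathbf{1}=0$ and $\mathbf{1}^{\T}\mW=\mathbf{1}^{\T}$ yield
\[
\mx_{k+1}-\mathbf{1}\ox_{k+1}=\mW(\mx_k-\mathbf{1}\ox_k)-\alpha\mW(\my_k-\mathbf{1}\oy_k),
\]
after which one application of Young's inequality with parameter $\eta=(1-\rho_w^2)/(2\rho_w^2)$ and Lemma~\ref{lem: spectral norm} on each $\mW$-term gives $(1+\eta)\rho_w^2=(1+\rho_w^2)/2$ and $(1+1/\eta)\rho_w^2=(1+\rho_w^2)\rho_w^2/(1-\rho_w^2)$, exactly matching~\eqref{Second_Main_Inequality}.

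The work lies in inequality~\eqref{Third_Main_Inquality}. Setting $P:=\mI-\tfrac{1}{n}\mathbf{1}\mathbf{1}^{\T}$ and $N_{\ell}:=G(\mx_\ell,\boldsymbol{\xi}_\ell)-\nabla F(\mx_\ell)$, subtracting $\mathbf{1}\oy_{k+1}$ from the second line of~\eqref{eq: x_k} yields
\[
\my_{k+1}-\mathbf{1}\oy_{k+1}=\mW(\my_k-\mathbf{1}\oy_k)+P\bigl[\nabla F(\mx_{k+1})-\nabla F(\mx_k)\bigr]+PN_{k+1}-PN_k.
\]
My plan is to first condition on $\mathcal{F}_{k+1}$: since $\mx_{k+1}$, $N_k$, and $\my_k$ are $\mathcal{F}_{k+1}$-measurable while $\bE[PN_{k+1}\mid\mathcal{F}_{k+1}]=0$, the $PN_{k+1}$ term decouples and contributes $\bE[\|PN_{k+1}\|^2\mid\mathcal{F}_{k+1}]\le n\sigma^2$. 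After further conditioning on $\mathcal{F}_k$, I apply Young's inequality with parameter $\beta$ to separate the $\mW(\my_k-\mathbf{1}\oy_k)$ piece (which yields the leading $\rho_w^2$ factor) from the remaining deterministic-plus-$N_k$ piece, and then expand $\|\nabla F(\mx_{k+1})-\nabla F(\mx_k)\|^2\le L^2\|\mx_{k+1}-\mx_k\|^2$ using the decomposition
\[
\mx_{k+1}-\mx_k=(\mW-\mI)(\mx_k-\mathbf{1}\ox_k)-\alpha\mW(\my_k-\mathbf{1}\oy_k)-\alpha\mathbf{1}\oy_k.
\]
The first summand feeds the $\|\mW-\mI\|^2L^2\|\mx_k-\mathbf{1}\ox_k\|^2$ terms; the second summand, via cross-terms generated when squaring, feeds back into $\|\my_k-\mathbf{1}\oy_k\|^2$ and is what produces the extra $4\alpha L+2\alpha^2L^2$ correction to the $\rho_w^2$ coefficient; the third is then split as $\mathbf{1}\nabla f(\ox_k)+\mathbf{1}(h(\mx_k)-\nabla f(\ox_k))+\mathbf{1}(\oy_k-h(\mx_k))$, where $\|\mathbf{1}\nabla f(\ox_k)\|^2=n\|\nabla f(\ox_k)-\nabla f(x^*)\|^2\le nL^2\|\ox_k-x^*\|^2$ supplies the $2\alpha nL^3\|\ox_k-x^*\|^2$ term, Lemma~\ref{lem: strong_convexity} feeds the $3\alpha L^3\|\mx_k-\mathbf{1}\ox_k\|^2$ term, and Lemma~\ref{lem: oy_k-h_k} contributes $\sigma^2$ pieces to $M_\sigma$ together with the $\|N_k\|^2\le n\sigma^2$ and $\|N_{k+1}\|^2\le n\sigma^2$ bounds.

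The main obstacle is precisely the last step: keeping every cross-term and Frobenius-norm expansion in~\eqref{Third_Main_Inquality} accounted for so that the coefficient $(1/\beta+2)\|\mW-\mI\|^2L^2+3\alpha L^3$ of $\|\mx_k-\mathbf{1}\ox_k\|^2$, the coefficient $2\alpha nL^3$ of $\|\ox_k-x^*\|^2$, and the exact constant $M_\sigma=[3\alpha^2L^2+2(\alpha L+1)(n+1)]\sigma^2$ all emerge without slack; inequalities~\eqref{First_Main_Inequality} and~\eqref{Second_Main_Inequality} are by comparison routine, each requiring only a single, carefully tuned application of Young's inequality.
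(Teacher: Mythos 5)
Your treatment of \eqref{First_Main_Inequality} and \eqref{Second_Main_Inequality} is correct and essentially identical to the paper's (the paper expands the square and regroups rather than writing the three-term decomposition up front, but the orthogonality of $\oy_k-h(\mx_k)$, the use of Lemma~\ref{lem: strong_convexity}, and the Young parameters $\alpha\mu$ and $(1-\rho_w^2)/(2\rho_w^2)$ are exactly what the paper uses). The decomposition you set up for \eqref{Third_Main_Inquality} is also the paper's, modulo writing the projection $P=\mI-\tfrac1n\mathbf{1}\mathbf{1}^{\T}$ explicitly, and your handling of $PN_{k+1}$ by first conditioning on $\mathcal{F}_{k+1}$ is right.

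The gap is in the terms you do not mention: after $PN_{k+1}$ decouples, the expansion of $\|\mW(\my_k-\mathbf{1}\oy_k)+P(\nabla F(\mx_{k+1})-\nabla F(\mx_k))-PN_k\|^2$ still contains the cross terms $\bE[\langle \mW\my_k-\mathbf{1}\oy_k,\,N_k\rangle\mid\mathcal{F}_k]$ and $\bE[\langle \nabla F(\mx_{k+1}),\,N_k\rangle\mid\mathcal{F}_k]$, and neither of these vanishes: $\my_k$ contains $G(\mx_k,\boldsymbol{\xi}_k)=\nabla F(\mx_k)+N_k$ through its own update, and $\mx_{k+1}=\mW(\mx_k-\alpha\my_k)$ therefore depends on $\xi_{i,k}$ as well, so $\nabla F(\mx_{k+1})$ is correlated with $N_k$. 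Your accounting of $M_\sigma$ via ``Lemma~\ref{lem: oy_k-h_k} plus $\|N_k\|^2,\|N_{k+1}\|^2\le n\sigma^2$'' cannot produce the factor $2(\alpha L+1)(n+1)\sigma^2$: the missing $2\alpha Ln\sigma^2+2\sigma^2$ comes precisely from these two cross terms. The paper devotes two dedicated lemmas (Lemmas~\ref{lem: 3 nablas} and~\ref{lem: Vy_k-oy_k, 4 nablas} in the appendix) to them, exploiting that $y_{j,k}$ depends on $\xi_{j,k}$ only through the single summand $g_j(x_{j,k},\xi_{j,k})$ (and $x_{i,k+1}$ on $\xi_{i,k}$ only through $-\alpha w_{ii}g_i(x_{i,k},\xi_{i,k})$), together with independence of the noise across agents and $L$-Lipschitzness of $\nabla f_i$, to get the sharp bounds $\sigma^2$ and $\alpha Ln\sigma^2$. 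A generic Cauchy--Schwarz/Young treatment of these cross terms would instead introduce a term of order $\|\my_k-\mathbf{1}\oy_k\|\cdot\sqrt{n}\sigma$, which either inflates the coefficient of $\bE[\|\my_k-\mathbf{1}\oy_k\|^2\mid\mathcal{F}_k]$ beyond $(1+4\alpha L+2\alpha^2L^2+\beta)\rho_w^2$ or puts an $n\sigma^2/\epsilon$ with a free parameter into the additive constant; either way the stated inequality is not recovered. You need to supply this structural argument to close the proof.
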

\begin{proof}
	See Appendix \ref{proof lem: Main_Inequalities}.
\end{proof}

\subsection{Proof of Theorem \ref{Theorem1}}
Taking full expectation on both sides of (\ref{First_Main_Inequality}), (\ref{Second_Main_Inequality}) and (\ref{Third_Main_Inquality}), we obtain the following linear system of inequalities
\begin{eqnarray}
\label{linear_system}
\begin{bmatrix}
\bE[\|\ox_{k+1}-x^*\|^2]\\
\bE[\|\mx_{k+1}-\mathbf{1}\ox_{k+1}\|^2]\\
\bE[\|\my_{k+1}-\mathbf{1}\oy_{k+1}\|^2]
\end{bmatrix}
\le 
\mA\begin{bmatrix}
\bE[\|\ox_{k}-x^*\|^2]\\
\bE[\|\mx_{k}-\mathbf{1}\ox_{k}\|^2]\\
\bE[\|\my_{k}-\mathbf{1}\oy_{k}\|^2]
\end{bmatrix}+\begin{bmatrix}
\frac{\alpha^2\sigma^2}{n}\\
0\\
M_{\sigma}
\end{bmatrix},
\end{eqnarray}
where the inequality is to be taken component-wise, and the entries of the matrix
$\mA=[a_{ij}]$ are given by
\begin{eqnarray*}
	& \begin{bmatrix}
		a_{11}\\
		a_{21}\\
		a_{31}
	\end{bmatrix}  =  
	\begin{bmatrix}
		1-\alpha\mu\\
		0\\
		2\alpha nL^3 
	\end{bmatrix},
	\begin{bmatrix}
		a_{12}\\
		a_{22}\\
		a_{32}
	\end{bmatrix} = 
	\begin{bmatrix}
		\frac{\alpha L^2}{\mu n}(1+\alpha\mu)\\
		\frac{1}{2}(1+\rho_w^2)\\
		\left(\frac{1}{\beta}+2\right)\|\mW-\mI\|^2 L^2+3\alpha L^3
	\end{bmatrix},\\
	& \begin{bmatrix}
		a_{13}\\
		a_{23}\\
		a_{33}
	\end{bmatrix} = 
	\begin{bmatrix}
		0 \\
		\alpha^2\frac{(1+\rho_w^2)\rho_w^2}{(1-\rho_w^2)}\\
		\left(1+4\alpha L+2\alpha^2 L^2+\beta\right)\rho_w^2
	\end{bmatrix},
\end{eqnarray*}
and $M_{\sigma}$ is given in (\ref{M_sigma}). Therefore, by induction we have
\begin{eqnarray}
\label{linear_system_bound}
\begin{bmatrix}
\bE[\|\ox_{k}-x^*\|^2]\\
\bE[\|\mx_{k}-\mathbf{1}\ox_{k}\|^2]\\
\bE[\|\my_{k}-\mathbf{1}\oy_{k}\|^2]
\end{bmatrix}
\le 
\mA^k\begin{bmatrix}
\bE[\|\ox_{0}-x^*\|^2]\\
\bE[\|\mx_{0}-\mathbf{1}\ox_{0}\|^2]\\
\bE[\|\my_{0}-\mathbf{1}\oy_{0}\|^2]
\end{bmatrix}+\sum_{l=0}^{k-1}\mA^l\begin{bmatrix}
\frac{\alpha^2\sigma^2}{n}\\
0\\
M_{\sigma}
\end{bmatrix}.
\end{eqnarray}
If the spectral radius of $\mA$ satisfies $\rho(\mA)<1$, then $\mA^k$ converges to $\mathbf{0}$ at the linear rate $\mathcal{O}(\rho(\mA)^k)$  (see \cite{horn1990matrix}), in which case $\sup_{l\ge k}\bE[\|\ox_l-x^*\|^2]$, $\sup_{l\ge k}\bE[\|\mx_l-\mathbf{1}\ox_l\|^2]$ and $\sup_{l\ge k}\bE[\|\my_l-\mathbf{1}\oy_l\|^2]$ all converge to a neighborhood of $0$ at the linear rate $\mathcal{O}(\rho(\mA)^k)$. 
The next lemma provides conditions that ensure $\rho(\mA)<1$.
\begin{lemma}
	\label{lem: rho_M}
	Let $\mS=[s_{ij}]\in\mathbb{R}^{3\times 3}$ be a nonnegative, irreducible matrix with 
	$s_{ii}<\lambda^*$ for some~{$\lambda^*>0$} for all $i=1,2,3$.
	Then $\rho(\mS)<\lambda^*$ iff $\mathrm{det}(\lambda^* \mI-\mS)>0$.
\end{lemma}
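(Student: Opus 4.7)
The plan is to prove the equivalence by a change of variables that reduces the statement to one about an irreducible nonnegative matrix with \emph{zero trace}, which then falls to a short case analysis on its Perron-Frobenius structure.

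First, I would decompose $\mS = D + N$ with $D := \mathrm{diag}(s_{11}, s_{22}, s_{33})$ and $N$ the off-diagonal part; the hypothesis $s_{ii} < \lambda^*$ makes $E := \lambda^* \mI - D$ a positive diagonal matrix, and so
\begin{equation*}
\lambda^* \mI - \mS \;=\; E - N \;=\; E\bigl(\mI - \tilde{N}\bigr), \qquad \tilde{N} := E^{-1} N.
\end{equation*}
The matrix $\tilde{N}$ is nonnegative with zero diagonal and inherits irreducibility from $\mS$ (the off-diagonal zero pattern is preserved). Taking determinants yields $\det(\lambda^* \mI - \mS) = \bigl(\prod_i (\lambda^* - s_{ii})\bigr)\det(\mI - \tilde{N})$, which has the sign of $\det(\mI - \tilde{N})$. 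In parallel, rearranging $\mS x = \mu x$ to $(\mu \mI - D)^{-1} N x = x$ (valid for $\mu > \max_i s_{ii}$) and invoking the strict monotonicity of the Perron root $\mu \mapsto \rho\bigl((\mu \mI - D)^{-1} N\bigr)$ on that range, one finds that $\rho(\mS) < \lambda^*$ iff $\rho(\tilde{N}) < 1$. The lemma thus reduces to showing
\begin{equation*}
\det(\mI - \tilde{N}) > 0 \iff \rho(\tilde{N}) < 1
\end{equation*}
for an irreducible nonnegative $3 \times 3$ matrix $\tilde{N}$ with zero trace.

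For the reduced claim I would write the spectrum as $\{\rho, \mu_2, \mu_3\}$ with $\rho := \rho(\tilde{N}) > 0$ (Perron-Frobenius), so that the zero-trace condition gives $\mu_2 + \mu_3 = -\rho$, and split into three cases. If $\tilde{N}$ is imprimitive (necessarily of period $3$ for $n = 3$), the eigenvalues are $\rho, \omega\rho, \omega^2\rho$ with $\omega = e^{2\pi i/3}$, and a direct computation gives $\det(\mI - \tilde{N}) = 1 - \rho^3$. If $\tilde{N}$ is primitive with a complex-conjugate pair $\mu_2 = \bar{\mu}_3$, then $\det(\mI - \tilde{N}) = (1 - \rho)\lvert 1 - \mu_2 \rvert^2$, whose sign is that of $1 - \rho$. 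If instead all three eigenvalues are real, primitivity forces $|\mu_2|, |\mu_3| < \rho$ strictly; combined with $\mu_2 + \mu_3 = -\rho$ this forces both $\mu_2, \mu_3 < 0$ (if the larger were $\ge 0$ the smaller would satisfy $\le -\rho$, contradicting strict Perron), whence $(1 - \mu_2)(1 - \mu_3) > 0$ and $\det(\mI - \tilde{N}) = (1-\rho)(1 - \mu_2)(1 - \mu_3)$ again has the sign of $1 - \rho$. In each case the equivalence is immediate.

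The subtle step is the three-real-eigenvalues case: without the zero-trace identity supplied by the $\mS = D + N$ reduction, one could a priori have $\mu_2 < 1 < \mu_3 < \rho$, producing $\det(\mI - \tilde{N}) > 0$ alongside $\rho > 1$. It is precisely the coupling of $\mu_2 + \mu_3 = -\rho$ with the strict Perron inequality $|\mu_i| < \rho$ (itself a consequence of irreducibility combined with primitivity in this subcase) that rules out this pathological configuration and closes the argument.
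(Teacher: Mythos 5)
Your reduction to the normalized matrix $\tilde{N} = (\lambda^*\mI - D)^{-1}N$ is sound: the sign identity $\det(\lambda^*\mI - \mS) = \bigl(\prod_i(\lambda^*-s_{ii})\bigr)\det(\mI-\tilde{N})$ is correct, and the equivalence $\rho(\mS)<\lambda^* \iff \rho(\tilde{N})<1$ does follow from strict monotonicity of $\mu\mapsto\rho\bigl((\mu\mI-D)^{-1}N\bigr)$, together with the facts that $\rho(\mS)>\max_i s_{ii}$ (strict domination of proper principal submatrices for irreducible matrices) and that $1$ is the Perron root of $(\rho(\mS)\mI-D)^{-1}N$ because it admits a positive eigenvector. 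This is a genuinely different route from the paper, which never leaves the characteristic polynomial $g(\lambda)=\det(\lambda\mI-\mS)$: there one checks $g'(\lambda)>0$ for $|\lambda|\ge\lambda^*$ and $g(-\lambda^*)<0$, so $g(\lambda^*)>0$ traps every real root in $(-\lambda^*,\lambda^*)$, and Perron--Frobenius supplies the real eigenvalue $\rho(\mS)$. The paper's argument is more elementary (no primitivity or spectral-structure input); yours buys a cleaner normalized statement about zero-trace matrices, but at the cost of a case analysis that must be exhaustive.

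And it is not exhaustive: the parenthetical claim that an imprimitive irreducible $3\times 3$ matrix is ``necessarily of period $3$'' is false. Period $2$ occurs, e.g.\ for
\begin{equation*}
\tilde{N}=\begin{pmatrix} 0 & a & 0\\ b & 0 & c\\ 0 & d & 0\end{pmatrix},\qquad a,b,c,d>0,
\end{equation*}
whose underlying graph is the bipartite path $1\leftrightarrow 2\leftrightarrow 3$; its spectrum is $\{\rho,0,-\rho\}$ with $\rho=\sqrt{ab+cd}$. This matrix is neither of period $3$ nor primitive, so it falls through all three of your cases; in particular the strict inequality $|\mu_i|<\rho$ that you invoke in the all-real case fails here since $\mu_3=-\rho$. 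The conclusion of the lemma does survive in this missing case: invariance of the spectrum under $\lambda\mapsto-\lambda$ combined with simplicity of the Perron root forces the spectrum to be exactly $\{\rho,0,-\rho\}$, whence $\det(\mI-\tilde{N})=(1-\rho)(1+\rho)=1-\rho^2$ again has the sign of $1-\rho$. But this fourth case must be stated and handled explicitly before the argument is complete.
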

\begin{proof}
	See Appendix \ref{subsec: proof lemma rho_M}.
\end{proof}
We now derive the conditions such that $\rho(\mA)<1$.
Suppose $\alpha$ and $\beta$ meet the following relations\footnote{Matrix $\mA$ in Theorem~\ref{Theorem1} 
	corresponds to such a choice of $\alpha$ and $\beta$.}:
\begin{equation}
\label{beta}
a_{33}=\left(1+4\alpha L+2\alpha^2 L^2+\beta\right)\rho_w^2=\frac{1+\rho_w^2}{2}<1,
\end{equation}
\begin{equation}
\label{alpha,beta}
a_{23}a_{32}=\alpha^2\frac{(1+\rho_w^2)\rho_w^2}{(1-\rho_w^2)}\left[\left(\frac{1}{\beta}+2\right)\|\mW-\mI\|^2 L^2+3\alpha L^3\right]
\le\frac{1}{\Gamma}(1-a_{22})(1-a_{33}),
\end{equation}
 and
\begin{equation}
\label{alpha last condition}
a_{12}a_{23}a_{31}=\frac{2\alpha^4 L^5(1+\alpha\mu)}{\mu}\frac{(1+\rho_w^2)}{(1-\rho_w^2)}\rho_w^2
\le \frac{1}{\Gamma+1}(1-a_{11})[(1-a_{22})(1-a_{33})-a_{23}a_{32}].
\end{equation}
Then,
\begin{multline*}
\text{det}(\mI-\mA)=(1-a_{11})(1-a_{22})(1-a_{33})-(1-a_{11})a_{23}a_{32}-a_{12}a_{23}a_{31}\\
\ge \frac{\Gamma}{(\Gamma+1)}(1-a_{11})[(1-a_{22})(1-a_{33})-a_{23}a_{32}]
\ge \left(\frac{\Gamma-1}{\Gamma+1}\right)(1-a_{11})(1-a_{22})(1-a_{33})>0.
\end{multline*}
Given that $a_{11},a_{22},a_{33}<1$, in light of Lemma \ref{lem: rho_M}, we have $\rho(\mA)<1$.
In addition, by denoting $B:=[\frac{\alpha^2\sigma^2}{n}, 0, M_{\sigma}]^{\T}$ and $[(\mI-\mA)^{-1}B]_j$ the $j$-th element of the vector $[(\mI-\mA)^{-1}B]$, we obtain from (\ref{linear_system_bound}) that
\begin{align}
\label{error_bound_preliminary}
\lim\sup_{k\rightarrow\infty}\bE[\|\ox_k-x^*\|^2]\le & [(\mI-\mA)^{-1}B]_1 \notag\\
= & \frac{1}{\text{det}(\mI-\mA)}\left\{\left[(1-a_{22})(1-a_{33})-a_{23}a_{32}\right]\frac{\alpha^2\sigma^2}{n}+a_{12}a_{23}M_{\sigma}\right\} \notag\\
\le & \frac{(\Gamma+1)}{\Gamma}\frac{\alpha\sigma^2}{\mu n}+\left(\frac{\Gamma+1}{\Gamma-1}\right)\frac{ a_{12}a_{23}M_{\sigma}}{(1-a_{11})(1-a_{22})(1-a_{33})}\notag\\
= & \frac{(\Gamma+1)}{\Gamma}\frac{\alpha\sigma^2}{\mu n}+ \left(\frac{\Gamma+1}{\Gamma-1}\right)\frac{\alpha^3 L^2(1+\alpha\mu)(1+\rho_w^2)\rho_w^2M_{\sigma}}{\mu n(1-\rho_w^2)(1-a_{11})(1-a_{22})(1-a_{33})}\notag\\
= & \frac{(\Gamma+1)}{\Gamma}\frac{\alpha\sigma^2}{\mu n}+\left(\frac{\Gamma+1}{\Gamma-1}\right)\frac{4\alpha^2 L^2(1+\alpha\mu)(1+\rho_w^2)\rho_w^2}{\mu^2 n(1-\rho_w^2)^3}M_{\sigma},
\end{align}
and
\begin{multline*}
\lim\sup_{k\rightarrow\infty}\bE[\|\mx_k-\mathbf{1}\ox_k\|^2] \le [(\mI-\mA)^{-1}B]_2
=\frac{1}{\text{det}(\mI-\mA)}\left[a_{23}a_{31}\frac{\alpha^2\sigma^2}{n}+a_{23}(1-a_{11})M_{\sigma}\right]\\
\le \left(\frac{\Gamma+1}{\Gamma-1}\right)\frac{a_{23}}{(1-a_{11})(1-a_{22})(1-a_{33})}\left(2\alpha nL^3\frac{\alpha^2 \sigma^2}{n}+\alpha\mu M_{\sigma}\right)\\
= \frac{4(\Gamma+1)\alpha^2 (1+\rho_w^2)\rho_w^2(2\alpha^2L^3\sigma^2+\mu M_{\sigma})}{(\Gamma-1)\mu(1-\rho_w^2)^3}.
\end{multline*}
It remains to show that (\ref{beta}), (\ref{alpha,beta}) and (\ref{alpha last condition}) are satisfied under condition (\ref{alpha_ultimate_bound}).  By (\ref{beta}), we need
\begin{equation*}
\beta=\frac{1-\rho_w^2}{2\rho_w^2}-4\alpha L-2\alpha^2 L^2.
\end{equation*}
Since $\alpha\le \frac{1-\rho_w^2}{12\rho_w L}$ by (\ref{alpha_ultimate_bound}), we know that
\begin{equation}
\label{beta_bound}
\beta\ge \frac{1-\rho_w^2}{2\rho_w^2}-\frac{1-\rho_w^2}{3\rho_w}-\frac{(1-\rho_w^2)^2}{72\rho_w^2}\ge \frac{11(1-\rho_w^2)}{72\rho_w^2}\ge \frac{1-\rho_w^2}{8\rho_w^2}>0.
\end{equation}
Condition (\ref{alpha,beta}) leads to the inequality below:
\begin{equation*}
\alpha^2\frac{(1+\rho_w^2)\rho_w^2}{(1-\rho_w^2)}\left[\left(\frac{1}{\beta}+2\right)\|\mW-\mI\|^2 L^2+3\alpha L^3\right]\le\frac{(1-\rho_w^2)^2}{4\Gamma}.
\end{equation*}
By (\ref{beta_bound}), we only need
\begin{equation*}
\alpha^2\left[\frac{(2+6\rho_w^2)}{(1-\rho_w^2)}\|\mW-\mI\|^2 L^2+\frac{(1-\rho_w^2)}{4\rho_w^2}L^2\right]\le\frac{(1-\rho_w^2)^3}{4\Gamma(1+\rho_w^2)\rho_w^2}.
\end{equation*}
The preceding inequality is equivalent to
\begin{equation*}
\alpha\le \frac{(1-\rho_w^2)^2}{L\sqrt{\Gamma(1+\rho_w^2)}\sqrt{4\rho_w^2(2+6\rho_w^2)\|\mW-\mI\|^2+(1-\rho_w^2)^2}},
\end{equation*}
implying that it is sufficient to have
\begin{equation*}
\alpha\le\frac{(1-\rho_w^2)^2}{2\sqrt{\Gamma}L\max(6\rho_w\|\mW-\mI\|,1-\rho_w^2)}.
\end{equation*}
To see that relation (\ref{alpha last condition}) holds, consider a stronger condition
\begin{equation*}
\frac{2\alpha^4 L^5(1+\alpha\mu)}{\mu}\frac{(1+\rho_w^2)}{(1-\rho_w^2)}\rho_w^2
\le \frac{(\Gamma-1)}{\Gamma(\Gamma+1)}(1-a_{11})(1-a_{22})(1-a_{33}),
\end{equation*}
or equivalently,
\begin{equation*}
\frac{2\alpha^3 L^5(1+\alpha\mu)}{\mu^2}\frac{(1+\rho_w^2)}{(1-\rho_w^2)}\rho_w^2 \le \frac{(\Gamma-1)}{4\Gamma(\Gamma+1)}(1-\rho_w^2)^2.
\end{equation*}
It suffices that
\begin{equation}
\alpha\le \frac{(1-\rho_w^2)}{3\rho_w^{2/3}L}\left[\frac{\mu^2}{L^2}\frac{(\Gamma-1)}{\Gamma(\Gamma+1)}\right]^{1/3}.
\end{equation}

\subsection{Proof of Corollary \ref{cor: speed}}
We derive an upper bound of $\rho(\mA)$ under conditions (\ref{alpha_ultimate_bound}) and (\ref{alpha condition corollary}). Note that the characteristic function of $\mA$ is given by
\begin{equation*}
\text{det}(\lambda \mI-\mA)=(\lambda-a_{11})(\lambda-a_{22})(\lambda-a_{33})
-(\lambda-a_{11})a_{23}a_{32}-a_{12}a_{23}a_{31}.
\end{equation*}
Since $\text{det}(\mI-\mA)> 0$ and $\text{det}(\max\{a_{11},a_{22},a_{33}\} \mI-\mA)=\text{det}(a_{11}\mI-\mA)<0$, we have $\rho(\mA)\in(a_{11},1)$. By (\ref{alpha,beta}) and (\ref{alpha last condition}),
\begin{multline*}
\text{det}(\lambda \mI-\mA)
\ge (\lambda-a_{11})(\lambda-a_{22})(\lambda-a_{33})-(\lambda-a_{11})a_{23}a_{32}-\frac{1}{\Gamma+1}(1-a_{11})[(1-a_{22})(1-a_{33})-a_{23}a_{32}]\\
\ge (\lambda-a_{11})(\lambda-a_{22})(\lambda-a_{33})-\frac{1}{\Gamma}(\lambda-a_{11})(1-a_{22})(1-a_{33})
-\frac{(\Gamma-1)}{\Gamma(\Gamma+1)}(1-a_{11})(1-a_{22})(1-a_{33}).
\end{multline*}
Suppose $\lambda=1-\epsilon$ for some $\epsilon\in(0,\alpha\mu)$, satisfying
\begin{equation*}
\text{det}(\lambda \mI-\mA)
\ge \frac{1}{4}(\alpha\mu-\epsilon)\left(1-\rho_w^2-2\epsilon\right)^2
-\frac{1}{4\Gamma}(\alpha\mu-\epsilon)(1-\rho_w^2)^2-\frac{(\Gamma-1)\alpha\mu}{4\Gamma(\Gamma+1)}(1-\rho_w^2)^2\ge 0,
\end{equation*}
or equivalently,
\begin{equation*}
\frac{(\alpha\mu-\epsilon)}{\alpha\mu}\left[\frac{\left(1-\rho_w^2-2\epsilon\right)^2}{(1-\rho_w^2)^2}
-\frac{1}{\Gamma}\right]\ge \frac{(\Gamma-1)}{\Gamma(\Gamma+1)}.
\end{equation*}
It suffices that
\begin{equation*}
\epsilon\le \left(\frac{\Gamma-1}{\Gamma+1}\right)\alpha\mu.
\end{equation*}
To see why this is the case, note that $\frac{(\alpha\mu-\epsilon)}{\alpha\mu}\ge\frac{2}{\Gamma+1}$, and under (\ref{alpha condition corollary}),
\begin{equation*}
\epsilon\le \left(\frac{\Gamma-1}{\Gamma+1}\right)\frac{(\Gamma+1)}{\Gamma}\frac{(1-\rho_w^2)}{8\mu}\mu=\frac{(\Gamma-1)(1-\rho_w^2)}{8\Gamma}.
\end{equation*}
As a result,
\begin{equation*}
1-\rho_w^2-2\epsilon\ge 1-\rho_w^2-\frac{(\Gamma-1)(1-\rho_w^2)}{4\Gamma}=\frac{(3\Gamma+1)(1-\rho_w^2)}{4\Gamma}.
\end{equation*}
We then have
\begin{multline*}
\frac{(\alpha\mu-\epsilon)}{\alpha\mu}\left[\frac{\left(1-\rho_w^2-2\epsilon\right)^2}{(1-\rho_w^2)^2}
-\frac{1}{\Gamma}\right]\ge \frac{2}{(\Gamma+1)} \left[\frac{(3\Gamma+1)^2}{16\Gamma^2}-\frac{1}{\Gamma}\right]=\frac{1}{\Gamma(\Gamma+1)} \left[\frac{(3\Gamma+1)^2}{8\Gamma}-2\right]\\
=\frac{1}{\Gamma(\Gamma+1)} \left(\Gamma-1+\frac{\Gamma}{8}+\frac{1}{8\Gamma}-\frac{1}{4}\right)\ge \frac{(\Gamma-1)}{\Gamma(\Gamma+1)}.
\end{multline*}
Denote
\begin{equation*}
\tilde{\lambda}=1-\left(\frac{\Gamma-1}{\Gamma+1}\right)\alpha\mu.
\end{equation*}
Then $\text{det}(\tilde{\lambda} \mI-\mA)\ge 0$ so that $\rho(\mA)\le \tilde{\lambda}$.

\subsection{Proof of Theorem \ref{theorem2}}

Similar to (\ref{linear_system}), under stepsize policy $\alpha_k=\theta/(m+k)$ where $m>\frac{\theta}{2}(\mu+L)$, we have
\begin{eqnarray}
\label{linear_system_SA}
\begin{bmatrix}
\bE[\|\ox_{k+1}-x^*\|^2]\\
\bE[\|\mx_{k+1}-\mathbf{1}\ox_{k+1}\|^2]\\
\bE[\|\my_{k+1}-\mathbf{1}\oy_{k+1}\|^2]
\end{bmatrix}
\le 
\mA_k\begin{bmatrix}
\bE[\|\ox_{k}-x^*\|^2]\\
\bE[\|\mx_{k}-\mathbf{1}\ox_{k}\|^2]\\
\bE[\|\my_{k}-\mathbf{1}\oy_{k}\|^2]
\end{bmatrix}+\begin{bmatrix}
\frac{\alpha_k^2\sigma^2}{n}\\
0\\
M_k
\end{bmatrix},
\end{eqnarray}
where
\begin{eqnarray*}
	\mA_k=\begin{bmatrix}
		1-\alpha_k\mu & \frac{\alpha_k L^2}{\mu n}(1+\alpha_k\mu) & 0\\
		0 & \frac{1}{2}(1+\rho_w^2) & \alpha_k^2\frac{(1+\rho_w^2)\rho_w^2}{(1-\rho_w^2)}\\
		2\alpha_k nL^3 & \left(\frac{1}{\beta_k}+2\right)\|\mW-\mI\|^2 L^2+3\alpha_k L^3 & \frac{1}{2}(1+\rho_w^2)
	\end{bmatrix},
\end{eqnarray*}
$\beta_k=\frac{1-\rho_w^2}{2\rho_w^2}-4\alpha_k L-2\alpha_k^2 L^2>0$, and
\begin{equation}
\label{M_k}
M_k:=\left[3\alpha_k^2L^2+2(\alpha_k L+1)(n+1)\right]\sigma^2.
\end{equation}
The condition $\beta_k>0$ is satisfied when
\begin{equation*}
\beta_0=\frac{1-\rho_w^2}{2\rho_w^2}-\frac{4\theta L}{m}-\frac{2\theta^2 L^2}{m^2}>0,
\end{equation*}
or equivalently,
\begin{equation}
\label{m condition1}
m>\frac{4\theta L\rho_w^2+2\theta L\rho_w\sqrt{1+3\rho_w^2}}{1-\rho_w^2}.
\end{equation}

We first show that $\bE[\|\ox_{k}-x^*\|^2]\le \hat{U}/(m+k)$ and $\bE[\|\mx_{k}-\mathbf{1}\ox_{k}\|^2\le \hat{X}/(m+k)^2$ for some $\hat{U}$, $\hat{X}>0$ by induction.
Denote $U_k:=\bE[\|\ox_{k}-x^*\|^2]$, $X_k:=\bE[\|\mx_{k}-\mathbf{1}\ox_{k}\|^2]$, and $Y_k:=\bE[\|\my_{k}-\mathbf{1}\oy_{k}\|^2]$. 
Suppose for some $k\ge 0$,
\begin{equation}
\label{Induction_assumption}
U_k\le \hat{U}/(m+k), \ \ X_k\le \hat{X}/(m+k)^2,\ \ Y_k\le \hat{Y}.
\end{equation}
We want to prove that
\begin{subequations}
	\label{Condition: UXY}
	\begin{align}
	& U_{k+1}\le \frac{(1-\alpha_k\mu)\hat{U}}{(m+k)}+\frac{\alpha_k L^2}{\mu n}\frac{(1+\alpha_k\mu)\hat{X}}{(m+k)^2}+\frac{\alpha_k^2\sigma^2}{n}\le \frac{\hat{U}}{(m+k+1)},\\
	& X_{k+1}\le \frac{(1+\rho_w^2)\hat{X}}{2(m+k)^2}+\alpha_k^2 \frac{(1+\rho_w^2)\rho_w^2 \hat{Y}}{(1-\rho_w^2)}\le \frac{\hat{X}}{(m+k+1)^2},\\
	& Y_{k+1}\le \frac{2\alpha_k nL^3\hat{U}}{(m+k)}+\frac{C\hat{X}}{(m+k)^2}+\frac{(1+\rho_w^2)\hat{Y}}{2}+M_0\le \hat{Y},
	\end{align}
\end{subequations}
where $C=(\frac{1}{\beta_0}+2)\|\mW-\mI\|^2 L^2+3\alpha_0 L^3$. Plugging in $\alpha_k=\theta/(m+k)$, it suffices to show
\begin{subequations}
	\label{Ultimate_Condtion: UXV}
	\begin{align}
	& \hat{U}\ge \frac{1}{n(\theta\mu-1)}\left[\left(\frac{\theta L^2}{\mu m}+\frac{\theta^2 L^2}{m^2}\right)\hat{X}+\theta^2\sigma^2\right], \label{Ultimate_Condtion: UXV_line1}\\
	& \hat{Y}\le \frac{(1-\rho_w^2)}{\theta^2(1+\rho_w^2)\rho_w^2}\left[\frac{(1-\rho_w^2)}{2}-\frac{2m+1}{(m+1)^2}\right]\hat{X}, \label{Ultimate_Condtion: UXV_line2}\\
	& \hat{Y}\ge \frac{2}{(1-\rho_w^2)}\left(\frac{2\theta nL^3\hat{U}}{m^2}+\frac{C\hat{X}}{m^2}+\left[\frac{3\theta^2L^2}{m^2}+2\left(\frac{\theta L}{m}+1\right)(n+1)\right]\sigma^2\right).\label{Ultimate_Condtion: UXV_line3}
	\end{align}
\end{subequations}
Let
\begin{equation*}
\hat{U}:= \max\left\{\frac{1}{n(\theta\mu-1)}\left[\left(\frac{\theta L^2}{\mu m}+\frac{\theta^2 L^2}{m^2}\right)\hat{X}+\theta^2\sigma^2\right],m\|\ox_0-x^*\|^2\right\},
\end{equation*}
condition (\ref{Ultimate_Condtion: UXV}) admits a solution iff
\begin{equation}
\label{m condtion2}
\frac{(1-\rho_w^2)^2}{\theta^2(1+\rho_w^2)\rho_w^2}\left[\frac{(1-\rho_w^2)}{2}-\frac{2m+1}{(m+1)^2}\right]> \frac{1}{(\theta\mu-1)}\left(\frac{1}{\mu}+\frac{\theta}{m}\right) \frac{4\theta^2 L^5}{m^3}+\frac{2C}{m^2},
\end{equation}
in which case $\hat{X}$ and $\hat{Y}$ are lower bounded under constraints \{(\ref{Ultimate_Condtion: UXV_line2}), (\ref{Ultimate_Condtion: UXV_line3}), $\hat{X}\ge m^2 X_0$, $\hat{Y}\ge Y_0$\}. Specifically, $\hat{X}$ can be chosen as follows:
\begin{equation*}
\hat{X}:=\max\left\{\frac{C_3}{C_1-C_2},\frac{C_5}{C_1-C_4},m^2 X_0,\frac{Y_0}{C_1}\right\},
\end{equation*}
where
\begin{subequations}
	\label{Constants C's}
	\begin{align}
	C_1 & =\frac{(1-\rho_w^2)}{\theta^2(1+\rho_w^2)\rho_w^2}\left[\frac{(1-\rho_w^2)}{2}-\frac{2m+1}{(m+1)^2}\right],\\
	C_2 & =\frac{2C}{(1-\rho_w^2)m^2},\\
	C_3 & = \frac{2}{(1-\rho_w^2)}\left\{\frac{2\theta nL^3\|\ox_{0}-x^*\|^2}{m}+\left[\frac{3\theta^2L^2}{m^2}+2\left(\frac{\theta L}{m}+1\right)(n+1)\right]\sigma^2\right\},\\
	C_4 & = \frac{2}{(1-\rho_w^2)}\left[\frac{2\theta^2 L^5}{m^3(\theta\mu-1)}\left(\frac{1}{\mu}+\frac{\theta}{m}\right)+\frac{C}{m^2}\right],\\
	C_5& = \frac{2}{(1-\rho_w^2)}\left[\frac{2\theta^3 L^3}{m^2(\theta\mu -1)}+\frac{3\theta^2L^2}{m^2}+2\left(\frac{\theta L}{m}+1\right)(n+1)\right]\sigma^2.
	\end{align}
\end{subequations}
Noticing that relation (\ref{Induction_assumption}) holds trivially when $k=0$, the induction is complete.

We further improve the bound on $U_k$ (inspired by \cite{olshevsky2018robust,pu2019sharp}). From (\ref{linear_system_SA}),
\begin{equation*}
U(k+1)\le (1-\alpha_k\mu)U_k+\frac{2\alpha_k L^2}{\mu n}X_k+\frac{\alpha_k^2 \sigma^2}{n}.
\end{equation*}
Since $X_k\le \hat{X}/(m+k)^2$,
\begin{equation*}
U(k+1)\le \left(1-\frac{\theta\mu}{m+k}\right)U_k+\frac{2\theta L^2\hat{X}}{\mu n(m+k)^3}+\frac{\theta^2 \sigma^2}{n(m+k)^2}.
\end{equation*}
Hence
\begin{equation*}
U_k\le \prod_{t=0}^{k-1}\left(1-\frac{\theta\mu}{m+t}\right)U_0+\sum_{t=0}^{k-1}\left(\prod_{j=t+1}^{k-1}\left(1-\frac{\theta\mu}{m+j}\right)\right)\left(\frac{2\theta L^2\hat{X}}{\mu n(m+t)^3}+\frac{\theta^2 \sigma^2}{n(m+t)^2}\right).
\end{equation*}
In light of Lemma 4.1 in \cite{olshevsky2019non},
\begin{equation*}
\prod_{t=0}^{k-1}\left(1-\frac{\theta\mu}{m+t}\right)\le \frac{m^{\theta\mu}}{(m+k)^{\theta\mu}},\quad \prod_{j=t+1}^{k-1}\left(1-\frac{\theta\mu}{m+j}\right)\le \frac{(m+t+1)^{\theta\mu}}{(m+k)^{\theta\mu}}.
\end{equation*}
Then,
\begin{align*}
U_k\le &  \frac{m^{\theta\mu}}{(m+k)^{\theta\mu}}U_0+\sum_{t=0}^{k-1}\frac{(m+t+1)^{\theta\mu}}{(m+k)^{\theta\mu}}\left(\frac{2\theta L^2\hat{X}}{\mu n(m+t)^3}+\frac{\theta^2 \sigma^2}{n(m+t)^2}\right)\\
= & \frac{m^{\theta\mu}}{(m+k)^{\theta\mu}}U_0+\frac{1}{(m+k)^{\theta\mu}}\left(\frac{2\theta L^2\hat{X}}{\mu n}\sum_{t=0}^{k-1}\frac{(m+t+1)^{\theta\mu}}{(m+t)^3}+\frac{\theta^2 \sigma^2}{n}\sum_{t=0}^{k-1}\frac{(m+t+1)^{\theta\mu}}{(m+t)^2}\right)\\
\le & \frac{m^{\theta\mu}}{(m+k)^{\theta\mu}}U_0+\frac{1}{(m+k)^{\theta\mu}}\left(\frac{4\theta L^2\hat{X}}{\mu n}\sum_{t=0}^{k-1}(m+t)^{\theta\mu-3}+\frac{2\theta^2 \sigma^2}{n}\sum_{t=0}^{k-1}(m+t)^{\theta\mu-2}\right).
\end{align*}
Note that
\begin{align*}
& \sum_{t=0}^{k-1}(m+t)^{\theta\mu-3}\le \int_{t=-1}^{k}(m+t)^{\theta\mu-3}dt\le \max\left\{\frac{1}{\theta\mu-2}(m+k)^{\theta\mu-2},\frac{1}{2-\theta\mu}(m-1)^{\theta\mu-2}\right\},\\
& \sum_{t=0}^{k-1}(m+t)^{\theta\mu-2}\le \int_{t=-1}^{k}(m+t)^{\theta\mu-2}dt\le \frac{1}{\theta\mu-1}(m+k)^{\theta\mu-1}.
\end{align*}
We conclude that
\begin{align*}
U_k
\le & \frac{2\theta^2 \sigma^2}{n(\theta\mu-1)(m+k)}+\frac{m^{\theta\mu}}{(m+k)^{\theta\mu}}U_0+\frac{4\theta L^2\hat{X}}{\mu n}\max\left\{\frac{1}{(\theta\mu-2)(m+k)^2},\frac{(m-1)^{\theta\mu-2}}{(2-\theta\mu)(m+k)^{\theta\mu}}\right\}\\
\le & \frac{2\theta^2 \sigma^2}{n(\theta\mu-1)(m+k)}+\frac{\mathcal{O}_k(1)}{(m+k)^{\theta\mu}}+\frac{\mathcal{O}_k(1)}{(m+k)^2}.
\end{align*}

\section{A Gossip-Like Stochastic Gradient Tracking Method (GSGT)}
\label{sec: gossip}
In this section, we consider a gossip-like stochastic gradient tracking method (GSGT): Initialize with an arbitrary $x_{i,0}$ and $y_{i,0}=g_i(x_{i,0},\xi_{i,0})$ for all $i\in\mathcal{N}$. At each round $k\in\mathbb{N}$, agent $i_k\in\mathcal{N}$ wakes up with probability $1/n$. Then $i_k$ either communicates with one of its neighbors $j_k$ (with probability $\pi_{i_k j_k}$) or not (with probability $\pi_{i_k i_k}=1-\sum_{j\in\mathcal{N}_{i_k}}\pi_{i_k j}$). In the former situation, the update rule for $i\in\{i_k,j_k\}$ is as follows,
\begin{subequations}\label{gossip algorithm}
	\begin{align}
	x_{i,k+1}
	= & \frac{1}{2}(x_{i_k,k}+x_{j_k,k})-\alpha y_{i,k}, \label{eq:x-update gossip}\\
	y_{i,k+1}= & \frac{1}{2}(y_{i_k,k}+y_{j_k,k})+g_i(x_{i,k+1},\xi_{i,k+1})-g_i(x_{i,k},\xi_{i,k}), \label{eq:y-update gossip}
	\end{align}
\end{subequations}
and for $i\notin \{i_k,j_k\}$, $x_{i,k+1}=x_{i,k}$, $y_{i,k+1}=y_{i,k}$, and $\xi_{i,k+1}=\xi_{i,k}$\footnote{In practice, this means agent $i$ holds vectors $x_i$, $y_i$ and $g_i(x_i,\xi_i)$ if it does not wake up.}. In the latter situation, agent $i_k$ performs update based on its own information:
\begin{subequations}\label{gossip algorithm2}
	\begin{align}
	x_{i_k,k+1}
	= & x_{i_k,k}-2\alpha y_{i_k,k}, \label{eq:x-update gossip2}\\
	y_{i_k,k+1}= & y_{i_k,k}+g_{i_k}(x_{i_k,k+1},\xi_{i_k,k+1})-g_{i_k}(x_{i_k,k},\xi_{i_k,k}), \label{eq:y-update gossip2}
	\end{align}
\end{subequations}
while no action is taken by agent $i\neq i_k$. For ease of analysis, we denote $j_k=i_k$ in this case, and let $\mathds{1}_k$ be the indicator function for the event $\{j_k\neq i_k\}$, i.e., $\mathds{1}_k=1$ iff $j_k\neq i_k$.

The use of stepsize $2\alpha$ instead of $\alpha$ in (\ref{eq:x-update gossip2}) can be understood as follows. At each iteration, GSGT performs two gradient updates within the network. This can be achieved either by two different agents respectively updating their solutions, or by one agent using a doubled stepsize. The method is different from a standard gossip algorithm where exactly two agents update at each round. This difference allows us to design the probabilities $\pi_{ij}$ with more flexibility. In particular, it is possible to construct a doubly stochastic 
probability matrix $\mPi=[\pi_{ij}]$ for any graph $\mathcal{G}$ under Assumption \ref{asp: network}.

We can present GSGT in the following compact matrix form, in which we adopt the notation previously used.
\begin{subequations}\label{gossip algorithm compact}
	\begin{align}
	\mx_{k+1}
	= & \mathbf{W}_k\mx_k-\alpha \mathbf{D}_k\my_k, \label{eq:x-update gossip compact}\\
	\my_{k+1}= & \mathbf{W}_k\my_k+\tilde{\mathbf{D}}_k (G(\mx_{k+1},\xi_{k+1})-G(\mx_k,\xi_k)), \label{eq:y-update gossip compact}
	\end{align}
\end{subequations}
where the random coupling matrix $\mathbf{W}_k$ is defined as
\begin{equation*}
\mathbf{W}_k:=\mathbf{I}-\frac{(e_{i_k}-e_{j_k})(e_{i_k}-e_{j_k})^{\T}}{2},
\end{equation*}
in which $e_i=[0\, \cdots\, 0\, 1\, 0\,\cdots]^{\T}\in\mathbb{R}^{n\times 1}$ is a unit vector with the $i$th component equal to $1$. By definition, each $\mW_k$ is symmetric and doubly stochastic. The matrices $\mathbf{D}_k$ and $\tilde{\mathbf{D}}_k$ are diagonal with their $i_k$th and $j_k$th  diagonal entries equal to $1$ and all other entries equal to $0$ if $j_k\neq i_k$, otherwise the $i_k$th entry of $\mathbf{D}_k$ (respectively, $\tilde{\mathbf{D}}_k$) equals $2$ (respectively, $1$) while all other entries equal $0$.

We assume the following condition on the probability matrix $\mPi$:
\begin{assumption}
	\label{asp: Pi}
	Nonnegative matrix $\mPi$ is doubly stochastic.
\end{assumption}
Let
\begin{equation}
\bar{\mW}:=\bE[\mathbf{W}_k^{\T} \mathbf{W}_k].
\end{equation}
It can be shown that (see \cite{boyd2006randomized})
\begin{equation}
\label{bar W}
\bar{\mW}=\left(1-\frac{1}{n}\right)\mathbf{I}+\frac{\mPi+\mPi^{\T}}{2n},
\end{equation}
which is doubly stochastic.
\begin{lemma}
	\label{lem: spectral norm_gossip}
	Let Assumption \ref{asp: network} and Assumption \ref{asp: Pi} hold, and let $\rho_{\bar{w}}$ denote the spectral norm of 
	the matrix $\bar{\mW}-\frac{1}{n}\mathbf{1}\mathbf{1}^{\T}$. Then, $\rho_{\bar{w}}\in[1-2/n,1)$.
\end{lemma}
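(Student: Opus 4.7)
The plan is to exploit the symmetric doubly stochastic structure of $\bar{\mW}$ given by~\eqref{bar W}. Writing $\bar{\mW}=\mI-\frac{1}{n}\mathbf{L}$ with $\mathbf{L}:=\mI-(\mPi+\mPi^{\T})/2$, Assumption~\ref{asp: Pi} makes $(\mPi+\mPi^{\T})/2$ symmetric doubly stochastic; since the spectrum of any symmetric doubly stochastic matrix lies in $[-1,1]$, the matrix $\mathbf{L}$ is symmetric positive semidefinite with eigenvalues $\ell_1\le\ell_2\le\cdots\le\ell_n$ lying in $[0,2]$, and $\ell_1=0$ with associated eigenvector $\mathbf{1}/\sqrt{n}$.

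Correspondingly, the eigenvalues of $\bar{\mW}$ are $\mu_i=1-\ell_i/n\in[1-2/n,\,1]$, with $\mu_1=1$ attached to $\mathbf{1}/\sqrt{n}$. Because $\frac{1}{n}\mathbf{1}\mathbf{1}^{\T}$ is the orthogonal projector onto $\mathrm{span}\{\mathbf{1}\}$, the matrix $\bar{\mW}-\frac{1}{n}\mathbf{1}\mathbf{1}^{\T}$ shares an orthonormal eigenbasis with $\bar{\mW}$ and has eigenvalues $\{0\}\cup\{\mu_i\}_{i=2}^n$. For $n\ge 2$ these are all nonnegative, so
\[
\rho_{\bar w}\;=\;\max_{2\le i\le n}|\mu_i|\;=\;\max_{2\le i\le n}\mu_i\;=\;1-\ell_2/n.
\]

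The lower bound then drops out from $\ell_2\le 2$, which gives $\rho_{\bar w}\ge 1-2/n$. For the upper bound $\rho_{\bar w}<1$ I need $\ell_2>0$, i.e., $0$ simple as an eigenvalue of $\mathbf{L}$. Since $\mathbf{L}$ is the Laplacian of the weighted undirected graph with edge weights $(\pi_{ij}+\pi_{ji})/2$, its nullspace is spanned by indicator vectors of the connected components of that graph; under Assumption~\ref{asp: network}, together with the fact that in the gossip protocol $\pi_{ij}+\pi_{ji}>0$ on every edge of the connected graph $\mathcal{G}$, this support graph is connected, so $\ell_1=0$ is simple and $\ell_2>0$, yielding $\rho_{\bar w}=1-\ell_2/n<1$.

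The main obstacle is the upper bound, which hinges on identifying the spectral gap of $\mathbf{L}$ with the algebraic connectivity of the graph underlying $(\mPi+\mPi^{\T})/2$; the lower bound is essentially a one-line consequence of the eigenvalue shift $\mu_i=1-\ell_i/n$ together with the uniform estimate $\ell_i\le 2$ valid for any symmetric doubly stochastic matrix. I would formalize the connectivity step either by a standard spanning-tree argument (showing that any vector in $\ker\mathbf{L}$ is constant along edges of positive weight) or by invoking the corresponding spectral statement from \cite{boyd2006randomized}.
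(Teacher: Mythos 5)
Your proof is correct, and for the lower bound it is in substance the same computation as the paper's: the paper writes $\rho_{\bar w}=1-\frac{1}{n}+\frac{1}{n}\lambda_2\bigl(\frac{\mPi+\mPi^{\T}}{2}\bigr)$ and uses $\lambda_2\ge -1$, which is exactly your $\rho_{\bar w}=1-\ell_2/n$ with $\ell_2=1-\lambda_2\le 2$ after the substitution $\mathbf{L}=\mI-(\mPi+\mPi^{\T})/2$. Where you genuinely diverge is the strict upper bound $\rho_{\bar w}<1$: the paper simply says this ``follows from Lemma~\ref{lem: spectral norm},'' i.e., it delegates to the cited Perron--Frobenius-type result for doubly stochastic matrices, whereas you prove it from scratch by identifying $\mathbf{L}$ as the Laplacian of the weighted graph with weights $(\pi_{ij}+\pi_{ji})/2$ and arguing that its kernel is one-dimensional when that support graph is connected. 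Your route is more self-contained and has the virtue of exposing the hypothesis that is actually needed and that both arguments use implicitly: Assumption~\ref{asp: Pi} alone (nonnegative doubly stochastic $\mPi$) does not rule out, say, $\mPi=\mI$, for which $\rho_{\bar w}=1$; one must additionally know that $\pi_{ij}+\pi_{ji}>0$ on the edges of a connected graph, which is how the gossip protocol is set up but is not stated in the lemma's hypotheses. The paper's invocation of Lemma~\ref{lem: spectral norm} quietly relies on the same fact. The only cost of your approach is length; the eigenvalue bookkeeping ($\bar{\mW}-\frac{1}{n}\mathbf{1}\mathbf{1}^{\T}$ sharing an eigenbasis with $\bar{\mW}$, all relevant eigenvalues nonnegative for $n\ge 2$) is handled correctly.
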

\begin{proof}
	Since $\bar{W}$ is doubly stochastic, $\rho_{\bar{w}}<1$ follows from Lemma \ref{lem: spectral norm}.
	To see $\rho_{\bar{w}}\ge 1-2/n$, note that $\rho(\frac{\mPi+\mPi^{\T}}{2n})=\frac{1}{n}$, and $1$ is an algebraically simple eigenvalue of $\frac{\mPi+\mPi^{\T}}{2}$ and $\frac{1}{n}\mathbf{1}\mathbf{1}^{\T}$. We have
	\begin{equation}
	\label{equation:rho_bar w}
	\rho_{\bar{w}}=1-\frac{1}{n}+\frac{1}{n}\lambda_2\left(\frac{\mPi+\mPi^{\T}}{2}\right),
	\end{equation}
	where $\lambda_2(\frac{\mPi+\mPi^{\T}}{2})$ is the second largest eigenvalue of $\frac{\mPi+\mPi^{\T}}{2}$.
	Since $\lambda_2(\frac{\mPi+\mPi^{\T}}{2})\in[-1,1]$, we conclude that $\rho_{\bar{w}}\ge 1-2/n$.
\end{proof}
Before proceeding, it is worth noting that for GSGT, we still have the following relation:
\begin{equation}
\oy_k=\frac{1}{n}\mathbf{1}^{\T}G(\mx_k,\boldsymbol{\xi}_k),\forall k.
\end{equation}

\subsection{Main Results}

We present the main convergence results of GSGT in the following theorem.
\begin{theorem}
	\label{thm_gossip}
	Let $\Gamma>1$ be arbitrarily chosen. Suppose Assumptions \ref{asp: gradient samples}-\ref{asp: Pi} hold, 
	and assume that the stepsize $\alpha$ satisfies
	\begin{equation}
	\label{alpha_ultimate_bound_gossip}
	\alpha\le \frac{2n(1-\rho_{\bar{w}})}{\sqrt{\Gamma}L}\left\{\left[27(2\eta+3) Q n+16(8\eta+9)\right]Q(1-\rho_{\bar{w}})+48(6\eta+1)(8\eta+3)+96Q(1-\rho_{\bar{w}})\right\}^{-1/2},
	\end{equation}
	where $\eta=\frac{1}{n(1-\rho_{\bar{w}})}$ and $Q=L/\mu$. Then $\sup_{l\ge k}\bE[\|\ox_l-x^*\|^2]$ and $\sup_{l\ge k}\bE[\|\mx_{l}-\mathbf{1}\ox_{l}\|^2]$, respectively, converge to $\limsup_{k\rightarrow\infty}\bE[\|\ox_k-x^*\|^2]$ and $\limsup_{k\rightarrow\infty}\bE[\|\mx_k-\mathbf{1}\ox_k\|^2]$ at the linear rate $\mathcal{O}(\rho(\mA_g)^k)$, where $\rho(\mA_g)<1$ is the spectral radius of the matrix $\mA_g$ given by
	\begin{equation*}
	\mA_g=\begin{bmatrix}
	1-\frac{2\alpha\mu}{n} & \frac{2\alpha L^2}{\mu n^2}\left(1+\frac{2\alpha\mu}{n}\right) & \frac{4\alpha^2}{n^3}\\
	8\alpha^2 L^2 & \frac{1}{2}(1+\rho_{\bar{w}}) & \frac{2\alpha}{n}\left(\frac{1}{\beta_1}+\alpha\right)\\
	8\alpha^2L^4+4\alpha L^3 & \frac{L^2}{n}\left(4+\frac{2}{\beta_2}+8\alpha^2 L^2+4\alpha L\right) & \frac{1}{2}(1+\rho_{\bar{w}})
	\end{bmatrix},
	\end{equation*}
	in which $\beta_1=\frac{n(1-\rho_{\bar{w}})}{4\alpha}-4\alpha L^2$ and $\beta_2=\frac{n(1-\rho_{\bar{w}})}{4}-2\alpha L-2\alpha^2 L^2$.
	In addition,
	\begin{equation}
	\label{error_bound_ultimate_gossip}
	\limsup_{k\rightarrow\infty}\bE[\|\ox_k-x^*\|^2]\le \frac{\Gamma}{(\Gamma-1)}\frac{\sigma^2}{n^2}\left[\frac{20\alpha}{\mu (1-\rho_{\bar{w}})}+\frac{42(6\eta+1)\alpha^2 L^2}{\mu^2 (1-\rho_{\bar{w}})^2}\right],
	\end{equation}
	and
	\begin{equation}
	\label{consensus_error_bound_ultimate_gossip}
	\limsup_{k\rightarrow\infty}\bE[\|\mx_k-\mathbf{1}\ox_k\|^2]
	\le \frac{4\Gamma \sigma^2}{(\Gamma-1)(1-\rho_{\bar{w}})^2}\left[\frac{9(6\eta+1)\alpha^2}{n}+\frac{72\alpha^3 L^2}{\mu n^2}\right].
	\end{equation}
\end{theorem}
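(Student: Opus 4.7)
The plan is to mirror the strategy used for Theorem \ref{Theorem1}: build a three-variable recursion for the vector $\mathbf{v}_k := (U_k, X_k, Y_k)^{\T}$ with $U_k := \bE[\|\ox_k-x^*\|^2]$, $X_k := \bE[\|\mx_k-\mathbf{1}\ox_k\|^2]$, and $Y_k := \bE[\|\my_k-\mathbf{1}\oy_k\|^2]$; invoke Lemma \ref{lem: rho_M} to certify $\rho(\mA_g)<1$ under (\ref{alpha_ultimate_bound_gossip}); and conclude via $\limsup_k \mathbf{v}_k \le (\mI-\mA_g)^{-1}\mathbf{b}$, where $\mathbf{b}$ collects the $\sigma^2$-dependent residual terms.

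For the first row of $\mA_g$, I condition on $(i_k,j_k)$ and use $\bE[\mathbf{D}_k]=(2/n)\mathbf{I}$ to obtain $\bE[\ox_{k+1}\mid\mathcal{F}_k,\boldsymbol{\xi}_{k+1}]=\ox_k-(2\alpha/n)\oy_k$. Splitting $\oy_k=h(\mx_k)+(\oy_k-h(\mx_k))$, invoking the contraction $\|x-(2\alpha/n)\nabla f(x)-x^*\|\le (1-2\alpha\mu/n)\|x-x^*\|$ from Lemma \ref{lem: strong_convexity}, and combining Young's inequality with Lemma \ref{lem: strong_convexity} and Lemma \ref{lem: oy_k-h_k} produces $a_{11}$, $a_{12}$, and the $\sigma^2/n^3$-entry of $\mathbf{b}$; the residual variance of $\mathbf{D}_k$ about its mean, controlled via $\mathbf{1}^{\T}(\mathbf{D}_k-\bE\mathbf{D}_k)\mathbf{1}=0$ so that $\my_k$ may be replaced by $\my_k-\mathbf{1}\oy_k$, yields $a_{13}=4\alpha^2/n^3$. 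The second row follows from writing $\mx_{k+1}-\mathbf{1}\ox_{k+1}=\mW_k(\mx_k-\alpha\my_k)-\mathbf{1}\ox_{k+1}$, invoking the identity $\bE[\mW_k^{\T}\mW_k]=\bar{\mW}$ together with the $\rho_{\bar{w}}$-contraction from Lemma \ref{lem: spectral norm_gossip}, and splitting the cross terms by Young's inequality with parameter $\beta_1=\frac{n(1-\rho_{\bar{w}})}{4\alpha}-4\alpha L^2$ so that the consensus coefficient equals $(1+\rho_{\bar{w}})/2$. The third row is analogous, but first requires a bound on $\bE\|G(\mx_{k+1},\boldsymbol{\xi}_{k+1})-G(\mx_k,\boldsymbol{\xi}_k)\|^2$. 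Because only the $i_k$-th and $j_k$-th rows of $G$ can differ (precisely why the bookkeeping $\xi_{i,k+1}=\xi_{i,k}$ for inactive $i$ preserves $\oy_k=(1/n)\mathbf{1}^{\T}G(\mx_k,\boldsymbol{\xi}_k)$), Assumption \ref{asp: strconvexity} reduces the bound to $\|\mx_{k+1}-\mx_k\|^2$, which I control via (\ref{eq:x-update gossip compact}) together with Lemma \ref{lem: strong_convexity} applied to $\nabla f(\ox_k)$. A second Young's inequality with parameter $\beta_2=\frac{n(1-\rho_{\bar{w}})}{4}-2\alpha L-2\alpha^2 L^2$ then equates the tracking coefficient to $(1+\rho_{\bar{w}})/2$ and delivers the third row together with the pure-noise contribution to $\mathbf{b}$.

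With the recursion $\bE\mathbf{v}_{k+1}\le \mA_g\bE\mathbf{v}_k + \mathbf{b}$ in hand, Lemma \ref{lem: rho_M} applied with $\lambda^*=1$ reduces $\rho(\mA_g)<1$ to three polynomial inequalities in $\alpha$: $a_{33}<1$ (automatic from $\beta_2>0$), $a_{23}a_{32}\le \frac{1}{\Gamma}(1-a_{22})(1-a_{33})$, and $a_{12}a_{23}a_{31}+a_{13}a_{22}a_{31}\le \frac{1}{\Gamma+1}(1-a_{11})[(1-a_{22})(1-a_{33})-a_{23}a_{32}]$. Substituting the entries $a_{ij}$, expressing the resulting expressions in terms of $\eta=1/[n(1-\rho_{\bar{w}})]$ and $Q=L/\mu$, and collecting powers of $\alpha$ bundles all three into the single quadratic cap (\ref{alpha_ultimate_bound_gossip}). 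The limits (\ref{error_bound_ultimate_gossip}) and (\ref{consensus_error_bound_ultimate_gossip}) then come from reading off $[(\mI-\mA_g)^{-1}\mathbf{b}]_1$ and $[(\mI-\mA_g)^{-1}\mathbf{b}]_2$ via Cramer's rule and simplifying under the stepsize cap.

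The main obstacle I anticipate is the third-row derivation: unlike DSGT where every coordinate of $\my$ updates each step, in GSGT only two coordinates change and they depend jointly on the random indices $(i_k,j_k)$, the random multipliers $\tilde{\mathbf{D}}_k$, and the fresh samples $\boldsymbol{\xi}_{k+1}$. Cleanly decomposing $\bE[\|\mW_k\my_k+\tilde{\mathbf{D}}_k(G(\mx_{k+1},\boldsymbol{\xi}_{k+1})-G(\mx_k,\boldsymbol{\xi}_k))-\mathbf{1}\oy_{k+1}\|^2\mid\mathcal{F}_k]$ into contractive, consensus-error, optimality-gap, and pure-noise pieces that reproduce exactly the coefficients of the third row of $\mA_g$ is the technically heaviest step. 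A secondary challenge is the algebraic verification that the composite bound (\ref{alpha_ultimate_bound_gossip}) simultaneously tightens all three determinant-positivity conditions while preserving $\beta_1,\beta_2>0$.
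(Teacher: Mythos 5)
Your overall architecture is exactly the paper's: derive the three coupled inequalities of Lemma \ref{lem: gossip three main inequalities} by conditioning on the random pair $(i_k,j_k)$, assemble the recursion $\bE\mathbf{v}_{k+1}\le \mA_g\bE\mathbf{v}_k+\mathbf{b}$, certify $\rho(\mA_g)<1$ via Lemma \ref{lem: rho_M}, and read off the limits from $(\mI-\mA_g)^{-1}\mathbf{b}$ by Cramer's rule. The row-by-row derivations you sketch (the $\bE[\mathbf{D}_k]=(2/n)\mathbf{I}$ computation, the $\bar{\mW}$-contraction with Young parameter $\beta_1$, the reduction of $\|G_{k+1}-G_k\|$ to the two active rows with Young parameter $\beta_2$) are the ones the paper carries out.

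There is, however, one step that fails as written. You reduce $\rho(\mA_g)<1$ to the same pair of off-diagonal conditions used for DSGT, namely $a_{23}a_{32}\le\frac{1}{\Gamma}(1-a_{22})(1-a_{33})$ and a condition on $a_{12}a_{23}a_{31}$ (plus a term $a_{13}a_{22}a_{31}$, which is not a term of $\det(\mI-\mA_g)$ at all). That enumeration is valid only because the DSGT matrix has $a_{13}=a_{21}=0$. Here $\mA_g$ is fully populated ($b_{13}=4\alpha^2/n^3$, $b_{21}=8\alpha^2L^2$), so the determinant carries five negative contributions: $b_{12}b_{23}b_{31}$, $b_{13}b_{21}b_{32}$, $(1-b_{11})b_{23}b_{32}$, $(1-b_{22})b_{13}b_{31}$, and $(1-b_{33})b_{12}b_{21}$. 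Your listed inequalities omit $b_{13}b_{21}b_{32}$, $(1-b_{22})b_{13}b_{31}$, and $(1-b_{33})b_{12}b_{21}$, so they do not imply $\det(\mI-\mA_g)>0$. The paper instead imposes the single condition $\det(\mI-\mA_g)\ge(1-1/\Gamma)(1-b_{11})(1-b_{22})(1-b_{33})$ covering all five products at once, which is also why its final bounds carry the prefactor $\Gamma/(\Gamma-1)$ rather than the $(\Gamma+1)/(\Gamma-1)$ your two-stage splitting would produce. A second, smaller under-specification: in the third row, $G_k$ is not $\mathcal{F}_k$-measurable (inactive agents carry their old samples forward), so $\bE\|G_{k+1}-G_k\|^2$ does not reduce purely to a Lipschitz bound on $\|\mx_{k+1}-\mx_k\|^2$; the residual cross terms $\bE[\langle\nabla f_{i}(x_{i,k+1}),-g_{i,k}+\nabla f_i(x_{i,k})\rangle]$ and $\bE[\langle y_{i_k,k}+y_{j_k,k}-2\oy_k,-G_k+\nabla_k\rangle]$ require the dedicated arguments of Lemmas \ref{lem: gossip 3 bablas} and \ref{lem: gossip 4 nablas} (bounding the first by $O(\alpha L\sigma^2)$ and showing the second has the right sign). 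You correctly flag this as the hard step, but the plan as stated does not yet contain the idea needed to close it.
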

\begin{remark}
	Notice that $\eta=\frac{1}{n(1-\rho_{\bar{w}})}$ and $1-\rho_{\bar{w}}\le \frac{2}{n}$ from Lemma \ref{lem: spectral norm_gossip}.
	We can see from (\ref{error_bound_ultimate_gossip}) and (\ref{consensus_error_bound_ultimate_gossip}) that
	\begin{equation}
	\label{limsup_error_bound_ultimate_gossip}
	\limsup_{k\rightarrow\infty}\bE[\|\ox_k-x^*\|^2]=\frac{\alpha}{(1-\rho_{\bar{w}})}\mathcal{O}\left(\frac{\sigma^2}{\mu n^2}\right)+\frac{\alpha^2}{(1-\rho_{\bar{w}})^3}\mathcal{O}\left(\frac{ L^2\sigma^2}{\mu^2 n^3}\right),
	\end{equation}
	and
	\begin{equation}
	\label{limsup_consensus_error_bound_ultimate_gossip}
	\limsup_{k\rightarrow\infty}\frac{1}{n}\bE[\|\mx_k-\mathbf{1}\ox_k\|^2]=\frac{\alpha^2}{(1-\rho_{\bar{w}})^3}\mathcal{O}\left(\frac{\sigma^2}{n^3}\right)+\frac{\alpha^3}{(1-\rho_{\bar{w}})^2}\mathcal{O}\left(\frac{L^2\sigma^2}{\mu n^3}\right).
	\end{equation}
	Since 
	\begin{equation*}
	\frac{ L^2\sigma^2}{\mu^2 n^3}\ge \frac{\sigma^2}{n^3},
	\end{equation*}
	and by (\ref{alpha_ultimate_bound_gossip}),
	\begin{equation*}
	\frac{ L^2\sigma^2}{\mu^2 n^3}\ge \alpha(1-\rho_{\bar{w}})\frac{L^2\sigma^2}{\mu n^3},
	\end{equation*}
   the second term on the right-hand side of (\ref{limsup_error_bound_ultimate_gossip}) dominates the two terms on the right-hand side of (\ref{limsup_consensus_error_bound_ultimate_gossip}). Thus, we have
	\begin{multline}
	\label{limsup_x-x*_gossip}
	\limsup_{k\rightarrow\infty}\frac{1}{n}\bE[\|\mx_k-\mathbf{1}x^*\|^2]
	=\limsup_{k\rightarrow\infty}\bE[\|\ox_k-x^*\|^2]+\limsup_{k\rightarrow\infty}\frac{1}{n}\bE[\|\mx_k-\mathbf{1}\ox_k\|^2]\\
	=\frac{\alpha}{(1-\rho_{\bar{w}})}\mathcal{O}\left(\frac{\sigma^2}{\mu n^2}\right)+\frac{\alpha^2}{(1-\rho_{\bar{w}})^3}\mathcal{O}\left(\frac{ L^2\sigma^2}{\mu^2 n^3}\right).
	\end{multline}
\end{remark}
The corollary below provides an upper bound for $\rho(\mA_g)$.
\begin{corollary}
	\label{cor: speed_gossip}
	Under the conditions in Theorem {\ref{theorem2}} where $\Gamma>3/2$, we have
	\begin{equation*}
	\rho(\mA_g)\le 1-\frac{(2\Gamma-3)}{\Gamma}\frac{\alpha\mu}{n}.
	\end{equation*}
\end{corollary}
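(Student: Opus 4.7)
My plan is to mirror the proof of Corollary \ref{cor: speed} while carefully accounting for the two extra nonzero off-diagonal entries of $\mA_g$ (namely $a_{13} = 4\alpha^2/n^3$ and $a_{21} = 8\alpha^2 L^2$) that are absent in the DSGT matrix $\mA$. Writing $a_{ij}$ for the entries of $\mA_g$, the characteristic polynomial now expands fully as
\begin{equation*}
\det(\lambda \mI - \mA_g) = (\lambda - a_{11})(\lambda - a_{22})(\lambda - a_{33}) - (\lambda - a_{11})a_{23}a_{32} - (\lambda - a_{22})a_{13}a_{31} - (\lambda - a_{33})a_{12}a_{21} - a_{12}a_{23}a_{31} - a_{13}a_{21}a_{32},
\end{equation*}
containing three additional terms (those involving $a_{13}$ or $a_{21}$) beyond the DSGT expansion.

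The first step is to invoke the structural inequalities that the proof of Theorem \ref{thm_gossip} must establish to ensure $\det(\mI - \mA_g)>0$: namely analogs of conditions~(\ref{alpha,beta}) and~(\ref{alpha last condition}) that yield $a_{23}a_{32}\le \frac{1}{\Gamma}(1-a_{22})(1-a_{33})$ and that bound the combined cross contributions $a_{12}a_{23}a_{31} + a_{13}a_{21}a_{32} + (1-a_{22})a_{13}a_{31} + (1-a_{33})a_{12}a_{21}$ by $\frac{1}{\Gamma+1}(1-a_{11})[(1-a_{22})(1-a_{33})-a_{23}a_{32}]$. These are exactly what the intricate stepsize bound~(\ref{alpha_ultimate_bound_gossip}) is engineered to enforce: the extra $\alpha^2$-factors carried by $a_{13}$ and $a_{21}$ let the new terms be absorbed into the same type of estimate already used in Theorem \ref{Theorem1}.

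Next I would try the ansatz $\lambda = 1-\epsilon$ for $\epsilon \in (0, 2\alpha\mu/n)$, so that $\lambda - a_{11} = 2\alpha\mu/n - \epsilon$ and $\lambda - a_{22} = \lambda - a_{33} = (1-\rho_{\bar w})/2 - \epsilon$. Substituting and applying the inequalities above yields a lower bound
\begin{equation*}
\det((1-\epsilon) \mI - \mA_g) \ge \left(\tfrac{2\alpha\mu}{n} - \epsilon\right)\Bigl[\bigl(\tfrac{1-\rho_{\bar w}}{2}-\epsilon\bigr)^2 - \tfrac{1}{\Gamma}\bigl(\tfrac{1-\rho_{\bar w}}{2}\bigr)^2\Bigr] - \tfrac{\Gamma-1}{\Gamma(\Gamma+1)}\cdot \tfrac{2\alpha\mu}{n}\bigl(\tfrac{1-\rho_{\bar w}}{2}\bigr)^2,
\end{equation*}
exactly paralleling the corresponding step in Corollary \ref{cor: speed}. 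Setting $\epsilon = \frac{2\Gamma-3}{\Gamma}\frac{\alpha\mu}{n}$ makes the ratio $(2\alpha\mu/n - \epsilon)/(2\alpha\mu/n)$ equal to $3/(2\Gamma)$; and~(\ref{alpha_ultimate_bound_gossip}) makes $\epsilon$ small enough relative to $(1-\rho_{\bar w})/2$ that the bracketed expression is close to $(\Gamma-1)/\Gamma \cdot((1-\rho_{\bar w})/2)^2$. The elementary scalar inequality $\frac{3}{2\Gamma}\cdot\frac{\Gamma-1}{\Gamma}\ge \frac{\Gamma-1}{\Gamma(\Gamma+1)}$, valid whenever $\Gamma > 1$, then closes the estimate and gives $\det((1-\epsilon)\mI - \mA_g)\ge 0$, so $\rho(\mA_g)\le 1-\epsilon$ by Lemma \ref{lem: rho_M}; the hypothesis $\Gamma > 3/2$ enters separately, to guarantee that the chosen $\epsilon$ is positive.

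The main obstacle is controlling the three extra cross terms. The DSGT proof only needs the single chain $a_{12}\to a_{23}\to a_{31}$, whereas here we additionally must handle $a_{13}a_{31}$, $a_{12}a_{21}$, and the second triple product $a_{13}a_{21}a_{32}$. Although each is of higher order in $\alpha$, they shrink the usable spectral margin and are the reason the improvement factor degrades from $(\Gamma-1)/(\Gamma+1)$ in DSGT to $(2\Gamma-3)/\Gamma$ here. Tracking all these contributions carefully through the stepsize bound so that the final scalar inequality still closes is the principal bookkeeping challenge.
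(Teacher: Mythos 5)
Your high-level skeleton (full characteristic polynomial, ansatz $\lambda=1-\epsilon$, the ratio $(2\alpha\mu/n-\epsilon)/(2\alpha\mu/n)=3/(2\Gamma)$, Lemma \ref{lem: rho_M}) matches the paper, but the middle of your argument rests on structural inequalities that the GSGT analysis does not establish. The proof of Theorem \ref{thm_gossip} does not prove separate analogs of (\ref{alpha,beta}) and (\ref{alpha last condition}); it proves only the single lumped bound (\ref{det mI-mA>0}), i.e.\ that the sum of all five cross contributions $b_{12}b_{23}b_{31}+b_{13}b_{21}b_{32}+(1-b_{11})b_{23}b_{32}+(1-b_{22})b_{13}b_{31}+(1-b_{33})b_{12}b_{21}$ is at most $\frac{1}{\Gamma}(1-b_{11})(1-b_{22})(1-b_{33})$. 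Your proposed second condition (bounding the combined extra terms by $\frac{1}{\Gamma+1}(1-b_{11})[(1-b_{22})(1-b_{33})-b_{23}b_{32}]$) does not follow from this lumped bound -- deducing it would require something like $(1-b_{22})(1-b_{33})\le\Gamma^2\, b_{23}b_{32}$, which can fail -- so you would have to re-engineer the stepsize condition to justify it, and you do not.

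The observation you are missing is that for $\lambda\le 1$ each pairwise cross term enters the characteristic polynomial as $-(\lambda-b_{ii})b_{jk}b_{kj}=-(1-b_{ii})b_{jk}b_{kj}+(1-\lambda)b_{jk}b_{kj}$; after substituting (\ref{det mI-mA>0}) the leftover $(1-\lambda)(b_{23}b_{32}+b_{13}b_{31}+b_{12}b_{21})$ is \emph{nonnegative} and is simply dropped. The extra entries $b_{13},b_{21}$ therefore do not ``shrink the usable spectral margin'' at all, and one gets directly $\det(\lambda\mI-\mA_g)\ge(\lambda-b_{11})(\lambda-b_{22})(\lambda-b_{33})-\frac{1}{\Gamma}(1-b_{11})(1-b_{22})(1-b_{33})$, with no $\frac{1}{\Gamma+1}$ term anywhere. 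The whole corollary then reduces to the scalar inequality $\frac{3}{2\Gamma}\left(\frac{1-\rho_{\bar{w}}}{2}-\epsilon\right)^2\ge\frac{1}{\Gamma}\left(\frac{1-\rho_{\bar{w}}}{2}\right)^2$, i.e.\ $\epsilon\le(1-\sqrt{2/3})\frac{1-\rho_{\bar{w}}}{2}$, which (\ref{alpha_ultimate_bound_gossip}) guarantees since $\epsilon\le 2\alpha\mu/n\le 2\alpha L/n$. Your version instead needs $\frac{3}{2\Gamma}\left[r^2-\frac{1}{\Gamma}\right]\ge\frac{\Gamma-1}{\Gamma(\Gamma+1)}$ with $r=1-2\epsilon/(1-\rho_{\bar{w}})$, which is strictly more demanding (near $\Gamma=3/2$ it needs $r^2\ge 0.8$ rather than $2/3$), and the required quantification of ``$r$ close to $1$'' is exactly the step you leave unverified. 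The degradation of the constant to $(2\Gamma-3)/\Gamma$ comes from $1-b_{11}=2\alpha\mu/n$ and the lumped $\frac{1}{\Gamma}$ bound alone, not from having to tame the additional cross terms.
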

\begin{remark}
	Compared to DSGT, the convergence speed of GSGT is slower than DSGT under the same stepsize $\alpha$ (see Corollary \ref{cor: speed}). This is due to the fact that in GSGT, only two agents update their iterates at each iteration.
	\end{remark}

\subsection{Performance Comparison between DSGT and GSGT}
\label{subsec: comparison}

In this section, we compare the performances of the two proposed algorithms in terms of their required computation and communication efforts for achieving an $\epsilon$-solution (with constant stepsizes), that is, we compute the number of stochastic gradient computations and communications needed to obtain $\frac{1}{n}\bE[\|\mx_k-\mathbf{1}x^*\|^2]\le \epsilon$. Without loss of generality, for each method we first choose stepsize $\alpha$ such that $\frac{1}{n}\limsup_{k\rightarrow\infty}\bE[\|\mx_k-\mathbf{1}x^*\|^2]\le \epsilon/2$ and then compute the number of iterations $K$ such that $\frac{1}{n}\bE[\|\mx_K-\mathbf{1}x^*\|^2]\le \epsilon$.

For DSGT, when $\epsilon$ is small enough, we have $\alpha=\mathcal{O}(\frac{n\mu\epsilon}{\sigma^2})$ from (\ref{limsup_x-x*}).
Then, noting that $\sup_{l\ge k}\bE[\|\mx_l-\mathbf{1}x^*\|^2]$ converges linearly at the rate $\mathcal{O}(\rho(\mA)^k)$ where $\rho(\mA)=1-\mathcal{O}(\alpha\mu)$, we obtain the number of required iterations:
\begin{equation*}
K_d=\mathcal{O}\left(\frac{\ln(\frac{1}{\epsilon})}{\alpha\mu}\right)=\mathcal{O}\left(\frac{\sigma^2}{n\mu^2}\frac{\ln(\frac{1}{\epsilon})}{\epsilon}\right).
\end{equation*}
In $K_d$ iterations, the number of stochastic gradient computations is $N_d=nK_d=\mathcal{O}\left(\frac{\sigma^2}{\mu^2}\frac{\ln(\frac{1}{\epsilon})}{\epsilon}\right)$ and the number of communications is $N_d^c=2|\mathcal{E}|K_d=\mathcal{O}\left(\frac{|\mathcal{E}|}{n}\frac{\sigma^2}{\mu^2}\frac{\ln(\frac{1}{\epsilon})}{\epsilon}\right)$ where $|\mathcal{E}|$ stands for the number of edges in the graph.

For GSGT we need $\alpha=\mathcal{O}(\frac{n^2\mu\epsilon(1-\rho_{\bar{w}})}{\sigma^2})$ from (\ref{limsup_x-x*_gossip}).  Given that $\rho(\mA_g)=1-\mathcal{O}(\frac{\alpha\mu}{n})$, the number of required iterations $K_g$ can be calculated as follows:
\begin{equation*}
K_g=\mathcal{O}\left(\frac{n\ln(\frac{1}{\epsilon})}{\alpha\mu}\right)=\mathcal{O}\left(\frac{\sigma^2}{n(1-\rho_{\bar{w}})\mu^2}\frac{\ln(\frac{1}{\epsilon})}{\epsilon}\right).
\end{equation*}
In $K_g$ iterations, the number of gradient computations and communications are both bounded by $N_g=N_g^c=2K_g=\mathcal{O}(\frac{1}{n(1-\rho_{\bar{w}})}\frac{\sigma^2}{\mu^2}\frac{\ln(\frac{1}{\epsilon})}{\epsilon})$.

Suppose the Metropolis rule is applied to define the weights $\pi_{ij}$ \cite{sayed2014adaptive}.
We first compare the number of stochastic gradient computations for DSGT and GSGT, respectively. Noticing that $1-\rho_{\bar{w}}\le \frac{2}{n}$ by Lemma \ref{lem: spectral norm_gossip}, $N_g$ is at most in the same order of $N_d=\mathcal{O}(\frac{\sigma^2}{\mu^2}\frac{\ln(\frac{1}{\epsilon})}{\epsilon})$, which happens when $1-\rho_{\bar{w}}=\mathcal{O}(\frac{1}{n})$. Given that 
$1-\rho_{\bar{w}}=(1-\lambda_2(\mPi))/n$ from (\ref{equation:rho_bar w}),  we have $1-\rho_{\bar{w}}=\mathcal{O}(\frac{1}{n})$ for complete networks, almost all regular graphs \cite{friedman1989second}, among others.

We then compare the number of required communications $N_d^c=\mathcal{O}(\frac{|\mathcal{E}|}{n}\frac{\sigma^2}{\mu^2}\frac{\ln(\frac{1}{\epsilon})}{\epsilon})$ and $N_g^c=\mathcal{O}(\frac{1}{n(1-\rho_{\bar{w}})}\frac{\sigma^2}{\mu^2}\frac{\ln(\frac{1}{\epsilon})}{\epsilon})$. When $1-\rho_{\bar{w}}=\mathcal{O}(\frac{1}{n})$, we have $N_g^c=\mathcal{O}(\frac{\sigma^2}{\mu^2}\frac{\ln(\frac{1}{\epsilon})}{\epsilon})$. By contrast,  $N_d^c$ is $\mathcal{O}(\frac{|\mathcal{E}|}{n})$ times larger than $N_g^c$. In particular when $|\mathcal{E}|=\mathcal{O}(n^2)$ (e.g., complete network), the number of communications for GSGT is $\mathcal{O}(n)$ times smaller than that of DSGT.


\subsection{Proof of Theorem \ref{thm_gossip}}

We first derive a linear system of inequalities regarding $\bE[\|\ox_k-x^*\|^2]$, $\bE[\|\mx_k-\mathbf{1}\ox_k\|^2]$, $\bE[\|\my_k-\mathbf{1}\oy_k\|^2]$ and their values in the last iteration. 
\begin{lemma}
	\label{lem: gossip three main inequalities}
	Suppose Assumptions \ref{asp: gradient samples}-\ref{asp: Pi} hold and the stepsize satisfies $\alpha<n/(\mu+L)$. Then, we have the following inequalities:
	\begin{multline}
	\label{gossip: first main inequality}
	\bE[\|\ox_{k+1}-x^*\|^2]\le
	\left(1-\frac{2\alpha\mu}{n}\right)\bE[\|\ox_k-x^*\|^2]+\frac{2\alpha L^2}{\mu n^2}\left(1+\frac{2\alpha\mu}{n}\right)\bE[\|\mx_k-\mathbf{1}\ox_k\|^2]+\frac{4\alpha^2}{n^3}\bE[\|\my_k-\mathbf{1}\oy_k\|^2]\\
	+\frac{4\alpha^2\sigma^2}{n^3}.
	\end{multline}
	For any $\beta_1,\beta_2>0$,
	\begin{multline}
	\label{gossip: second main inequality}
	\bE[\|\mx_{k+1}-\mathbf{1}\ox_{k+1}\|^2]
	\le\left(\rho_{\bar{w}}+\frac{2\alpha}{n}\beta_1+\frac{8\alpha^2 L^2}{n}\right)\bE[\|\mx_k-\mathbf{1}\ox_k\|^2]+\frac{2\alpha}{n}\left(\frac{1}{\beta_1}+\alpha\right)\bE[\|\my_k-\mathbf{1}\oy_k\|^2]\\
	+8\alpha^2 L^2\bE[\|\ox_k-x^*\|^2]+\frac{4\alpha^2 \sigma^2}{n},
	\end{multline}
	\begin{multline}
	\label{gossip: third main inequality}
	\bE[\|\my_{k+1}-\mathbf{1}\oy_{k+1}\|^2]\le \left(\rho_{\bar{w}}+\frac{2}{n}\beta_2+\frac{4\alpha L}{n}+\frac{4\alpha^2L^2}{n}\right)\bE[\|\my_k-\mathbf{1}\oy_k\|^2]\\
	+\frac{L^2}{n}\left(4+\frac{2}{\beta_2}+8\alpha^2 L^2+4\alpha L\right)\bE[\|\mx_k-\mathbf{1}\ox_k\|^2]+(8\alpha^2L^4+4\alpha L^3)\bE[\|\ox_k-x^*\|^2]
	+\frac{(4\alpha^2L^2+2\alpha L)\sigma^2}{n}\\
	+4(\alpha L+1)\sigma^2.
	\end{multline}
\end{lemma}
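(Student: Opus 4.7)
The plan is to mirror the three-step structure of Lemma \ref{lem: Main_Inequalities} (for DSGT) while carefully handling the extra randomness introduced by the random coupling matrix $\mW_k$ and the sparse update matrices $\mathbf{D}_k,\tilde{\mathbf{D}}_k$. The compact form \eqref{gossip algorithm compact}, together with $\bE[\mW_k^{\T}\mW_k]=\bar{\mW}$ and Lemma \ref{lem: spectral norm_gossip}, will be the workhorse tools.

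\textbf{First inequality.} I would start by enumerating over wake-up patterns using Assumption \ref{asp: Pi} to show that $\bE[\mathbf{1}^{\T}\mathbf{D}_k\my_k\mid\mathcal{F}_k]=2\mathbf{1}^{\T}\my_k$, which yields $\bE[\ox_{k+1}\mid\mathcal{F}_k]=\ox_k-\tfrac{2\alpha}{n}\oy_k$. Then I split
\[
\bE[\|\ox_{k+1}-x^*\|^2\mid\mathcal{F}_k]=\|\ox_k-\tfrac{2\alpha}{n}\oy_k-x^*\|^2+\tfrac{\alpha^2}{n^2}\bE[\|\mathbf{1}^{\T}\mathbf{D}_k\my_k-2\oy_k\|^2\mid\mathcal{F}_k].
\]
The deterministic piece is handled exactly as in \eqref{First_Main_Inequality} via Lemma~\ref{lem: strong_convexity} with effective stepsize $2\alpha/n$, producing the $1-\tfrac{2\alpha\mu}{n}$ and $\tfrac{2\alpha L^2}{\mu n^2}(1+\tfrac{2\alpha\mu}{n})$ coefficients. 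The variance piece is bounded by $\tfrac{4\alpha^2}{n^3}\|\my_k\|^2$ after computing $\bE[\|\mathbf{D}_k\my_k\|^2\mid\mathcal{F}_k]\le \tfrac{4}{n}\|\my_k\|^2$, and then $\|\my_k\|^2=\|\my_k-\mathbf{1}\oy_k\|^2+n\|\oy_k\|^2$, with Lemma \ref{lem: oy_k-h_k} yielding the $\tfrac{4\alpha^2\sigma^2}{n^3}$ term.

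\textbf{Second inequality.} Using $\mW_k\mathbf{1}=\mathbf{1}$ and $\mathbf{1}\ox_{k+1}=\tfrac{1}{n}\mathbf{1}\mathbf{1}^{\T}\mx_{k+1}$, write
\[
\mx_{k+1}-\mathbf{1}\ox_{k+1}=\mW_k(\mx_k-\mathbf{1}\ox_k)-\alpha\bigl(\mI-\tfrac{1}{n}\mathbf{1}\mathbf{1}^{\T}\bigr)\mathbf{D}_k\my_k.
\]
Expand the squared norm and take conditional expectation: the leading term contracts to $\rho_{\bar{w}}\|\mx_k-\mathbf{1}\ox_k\|^2$ via $\bE[\mW_k^{\T}\mW_k]=\bar{\mW}$ and $\mathbf{1}^{\T}(\mx_k-\mathbf{1}\ox_k)=0$. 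The cross-term is split by Young's inequality with parameter $\beta_1$, giving $\tfrac{2\alpha\beta_1}{n}\|\mx_k-\mathbf{1}\ox_k\|^2+\tfrac{2\alpha}{\beta_1 n}\|\my_k\|^2$ (after projection and the $4/n$ bound on $\bE[\|\mathbf{D}_k\my_k\|^2\mid\mathcal{F}_k]$). The remaining $\alpha^2$-term contributes $\tfrac{4\alpha^2}{n}\|\my_k\|^2$, leading to the $\tfrac{2\alpha}{n}(\tfrac{1}{\beta_1}+\alpha)$ factor. Finally decompose $\|\my_k\|^2=\|\my_k-\mathbf{1}\oy_k\|^2+n\|\oy_k\|^2$ and bound $\|\oy_k\|^2$ using $\|h(\mx_k)\|^2\le 2L^2\|\ox_k-x^*\|^2+\tfrac{2L^2}{n}\|\mx_k-\mathbf{1}\ox_k\|^2$ together with Lemma \ref{lem: oy_k-h_k}, producing the $8\alpha^2 L^2$-coefficient on $\|\ox_k-x^*\|^2$, the absorbed $\tfrac{8\alpha^2 L^2}{n}$ extra on $\|\mx_k-\mathbf{1}\ox_k\|^2$, and the $\tfrac{4\alpha^2\sigma^2}{n}$ additive term.

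\textbf{Third inequality.} Using that $G(\mx_{k+1},\boldsymbol{\xi}_{k+1})-G(\mx_k,\boldsymbol{\xi}_k)$ is already supported on the active rows (so $\tilde{\mathbf{D}}_k$ acts as the identity on it), I would write
\[
\my_{k+1}-\mathbf{1}\oy_{k+1}=\mW_k(\my_k-\mathbf{1}\oy_k)+\bigl(\mI-\tfrac{1}{n}\mathbf{1}\mathbf{1}^{\T}\bigr)\bigl(G(\mx_{k+1},\boldsymbol{\xi}_{k+1})-G(\mx_k,\boldsymbol{\xi}_k)\bigr).
\]
The first term contracts to $\rho_{\bar{w}}\|\my_k-\mathbf{1}\oy_k\|^2$ in expectation. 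Apply Young's inequality with parameter $\beta_2$ to the cross-term, and bound the second term by splitting deterministic and noise parts: $\|\nabla F(\mx_{k+1})-\nabla F(\mx_k)\|^2\le L^2\|\mx_{k+1}-\mx_k\|^2$ (Assumption \ref{asp: strconvexity}), plus a noise contribution of order $\sigma^2$ per active agent, yielding the $4(\alpha L+1)\sigma^2$ and $\tfrac{(4\alpha^2 L^2+2\alpha L)\sigma^2}{n}$ additive pieces. Then substitute $\mx_{k+1}-\mx_k=(\mW_k-\mI)\mx_k-\alpha\mathbf{D}_k\my_k$ and expand, using once more the $4/n$ bound on $\bE[\|\mathbf{D}_k\my_k\|^2\mid\mathcal{F}_k]$, the analogous bound for $\bE[\|(\mW_k-\mI)(\mx_k-\mathbf{1}\ox_k)\|^2\mid\mathcal{F}_k]$, and the reduction of $\|\my_k\|^2$ done in the previous step to collect the $\|\ox_k-x^*\|^2$ and $\|\mx_k-\mathbf{1}\ox_k\|^2$ contributions with the stated coefficients.

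\textbf{Main obstacle.} The bookkeeping. There are three independent sources of randomness --- the active agent $i_k$, the neighbor (or self-update) $j_k$, and the oracles $\xi_{\cdot,k+1}$ --- and each squared-norm expansion generates cross-terms involving products of $\mW_k$, $\mathbf{D}_k$, and $\tilde{\mathbf{D}}_k$. The critical point is to compute $\bE[\|\mathbf{D}_k\my_k\|^2\mid\mathcal{F}_k]$ and analogous second moments sharply enough to retain the $1/n$ factor; a naive $\|\mathbf{D}_k\|^2\le 4$ bound loses the $1/n$ and wrecks the comparison between GSGT and DSGT laid out in Section~\ref{subsec: comparison}. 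The introduction of auxiliary parameters $\beta_1,\beta_2>0$ in the Young's inequality splits (rather than optimized values) is what allows \eqref{gossip: second main inequality}--\eqref{gossip: third main inequality} to be later combined into a spectral radius argument for $\mA_g$, exactly as $\beta$ is used in Lemma~\ref{lem: Main_Inequalities}.
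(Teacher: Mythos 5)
Your route is the paper's own: the exact identity $\ox_{k+1}=\ox_k-\frac{\alpha}{n}(y_{i_k,k}+y_{j_k,k})$ with a bias--variance split for the first bound, contraction through $\bE[\mW_k^{\T}\mW_k]=\bar{\mW}$ and Lemma~\ref{lem: spectral norm_gossip} for the second and third, Young splits with free parameters $\beta_1,\beta_2$, the sharp $\mathcal{O}(1/n)$ second moments of $\mathbf{D}_k\my_k$ computed via the doubly stochastic $\mPi$, and the reduction $\|\my_k\|^2=\|\my_k-\mathbf{1}\oy_k\|^2+n\|\oy_k\|^2$ followed by Lemma~\ref{lem: oy_k-h_k} and the Lipschitz bound on $\|h(\mx_k)\|$. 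Your decomposition for the third inequality is algebraically identical to the paper's once one uses $\mathbf{1}^{\T}(\mW_k\my_k-\mathbf{1}\oy_k)=0$, and you correctly identify that losing the $1/n$ in $\bE[\|\mathbf{D}_k\my_k\|^2\mid\mathcal{F}_k]$ would wreck the comparison in Section~\ref{subsec: comparison}.

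The one step that would fail if executed as written is the phrase ``splitting deterministic and noise parts'' in the third inequality. Neither $\bE[\|G_{k+1}-G_k\|^2]$ nor the cross-term $\bE[\langle\mW_k\my_k-\mathbf{1}\oy_k,G_{k+1}-G_k\rangle]$ decomposes orthogonally: the stale noise $G_k-\nabla F(\mx_k)$ is correlated with $\my_k$ (each $y_{i,k}$ contains $g_i(x_{i,k},\xi_{i,k})$ with coefficient one) and with $\nabla F(\mx_{k+1})$ (each $x_{i,k+1}$ depends on $y_{i,k}$). Assuming these inner products vanish is wrong, and bounding them by Cauchy--Schwarz ruins the constants (it would attach a $1/\beta_2$ to a $\sigma^2$ term and lose the $1/n$ factors). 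The paper devotes two dedicated estimates to them, the GSGT analogues of Lemmas~\ref{lem: 3 nablas} and~\ref{lem: Vy_k-oy_k, 4 nablas}: one shows $\bE[\langle \nabla f_{i_k}(x_{i_k,k+1}),\nabla f_{i_k}(x_{i_k,k})-g_{i_k}(x_{i_k,k},\xi_{i_k,k})\rangle]\le 2\alpha L\sigma^2$ by isolating the single $-\alpha\,g_{i_k}(x_{i_k,k},\xi_{i_k,k})$ term hidden inside $x_{i_k,k+1}$ and invoking Lipschitz continuity (this is precisely where the $\alpha L$ in your target constant $4(\alpha L+1)\sigma^2$ comes from); the other shows $\bE[\langle y_{i_k,k}+y_{j_k,k}-2\oy_k,\,G_k-\nabla F(\mx_k)\rangle]\ge 0$ so that the correlated part of the cross-term can simply be discarded. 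Your target coefficients are right, but these two correlation estimates must be stated and proved for the argument to close.
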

\begin{proof}
	See Appendix \ref{appendix: lem gossip three main inequalities}.
\end{proof}
In light of Lemma \ref{lem: gossip three main inequalities}, 
we have the following linear system of inequalities:
\begin{equation*}
\begin{bmatrix}
\bE[\|\ox_{k+1}-x^*\|^2]\\
\bE[\|\mx_{k+1}-\mathbf{1}\ox_{k+1}\|^2]\\
\bE[\|\my_{k+1}-\mathbf{1}\oy_{k+1}\|^2]
\end{bmatrix}
\le
\mA_g \begin{bmatrix}
\bE[\|\ox_{k}-x^*\|^2]\\
\bE[\|\mx_{k}-\mathbf{1}\ox_{k}\|^2]\\
\bE[\|\my_{k}-\mathbf{1}\oy_{k}\|^2]
\end{bmatrix}
+\begin{bmatrix}
\frac{4\alpha^2\sigma^2}{n^3}\\
\frac{4\alpha^2 \sigma^2}{n}\\
M_g
\end{bmatrix},
\end{equation*}
where
\begin{equation*}
\mA_g=[b_{ij}]=\begin{bmatrix}
1-\frac{2\alpha\mu}{n} & \frac{2\alpha L^2}{\mu n^2}\left(1+\frac{2\alpha\mu}{n}\right) & \frac{4\alpha^2}{n^3}\\
8\alpha^2 L^2 & \rho_{\bar{w}}+\frac{2\alpha}{n}\beta_1+\frac{8\alpha^2 L^2}{n} & \frac{2\alpha}{n}\left(\frac{1}{\beta_1}+\alpha\right)\\
8\alpha^2L^4+4\alpha L^3 & \frac{L^2}{n}\left(4+\frac{2}{\beta_2}+8\alpha^2 L^2+4\alpha L\right) & \rho_{\bar{w}}+\frac{2}{n}\beta_2+\frac{4\alpha L}{n}+\frac{4\alpha^2L^2}{n}
\end{bmatrix},
\end{equation*}
and $M_g=\frac{(4\alpha^2L^2+2\alpha L)\sigma^2}{n}+4(\alpha L+1)\sigma^2$. 
Suppose $\alpha$, $\beta_1,\beta_2>0$ satisfy
\begin{align}
& b_{22}=\rho_{\bar{w}}+\frac{2\alpha}{n}\beta_1+\frac{8\alpha^2 L^2}{n}  = \frac{1+\rho_{\bar{w}}}{2},\label{beta_1 condition}\\
& b_{33}=\rho_{\bar{w}}+\frac{2}{n}\beta_2+\frac{4\alpha L}{n}+\frac{4\alpha^2L^2}{n} = \frac{1+\rho_{\bar{w}}}{2},\label{beta_2 condition}
\end{align}
and
\begin{multline}
\label{det mI-mA>0}
\text{det}(\mI-\mA_g)=(1-b_{11})(1-b_{22})(1-b_{33})-b_{12}b_{23}b_{31}-b_{13}b_{21}b_{32}-(1-b_{11})b_{23}b_{32}-(1-b_{22})b_{13}b_{31}\\
-(1-b_{33})b_{12}b_{21}\ge (1-1/\Gamma)(1-b_{11})(1-b_{22})(1-b_{33})>0.
\end{multline}
Then, by Lemma \ref{lem: rho_M}, the spectral radius of $\mA_g$ is smaller than $1$,
and we have
\begin{eqnarray}
\label{linear_system_bound_gossip}
\begin{bmatrix}
\bE[\|\ox_{k}-x^*\|^2]\\
\bE[\|\mx_{k}-\mathbf{1}\ox_{k}\|^2]\\
\bE[\|\my_{k}-\mathbf{1}\oy_{k}\|^2]
\end{bmatrix}
\le 
\mA_g^k\begin{bmatrix}
\bE[\|\ox_{0}-x^*\|^2]\\
\bE[\|\mx_{0}-\mathbf{1}\ox_{0}\|^2]\\
\bE[\|\my_{0}-\mathbf{1}\oy_{0}\|^2]
\end{bmatrix}+\sum_{l=0}^{k-1}\mA_g^l
\begin{bmatrix}
\frac{4\alpha^2\sigma^2}{n^3}\\
\frac{4\alpha^2 \sigma^2}{n}\\
M_g
\end{bmatrix}.
\end{eqnarray}
Hence, 
$\sup_{l\ge k}\bE[\|\ox_l-x^*\|^2]$, $\sup_{l\ge k}\bE[\|\mx_l-\mathbf{1}\ox_l\|^2]$ and $\sup_{l\ge k}\bE[\|\my_l-\mathbf{1}\oy_l\|^2]$ all converge to a neighborhood of $0$ at the linear rate $\mathcal{O}(\rho(\mA_g)^k)$. 
Moreover,
\begin{multline}
\label{gossip_convergence pre}
\begin{bmatrix}
\limsup_{k\rightarrow\infty}\bE[\|\ox_k-x^*\|^2]\\
\limsup_{k\rightarrow\infty}\bE[\|\mx_k-\mathbf{1}\ox_{k}\|^2]\\
\limsup_{k\rightarrow\infty}\bE[\|\my_k-\mathbf{1}\oy_{k}\|^2]
\end{bmatrix}
\le
(\mI-\mA_g)^{-1}
\begin{bmatrix}
\frac{4\alpha^2\sigma^2}{n^3}\\
\frac{2\alpha^2 \sigma^2}{n}\\
M_g
\end{bmatrix}
=\frac{1}{\text{det}(\mI-\mA)}	\\
\cdot\begin{bmatrix}
(1-b_{22})(1-b_{33})-b_{23}b_{32} & b_{13}b_{32}+b_{12}(1-b_{33}) & b_{12}b_{23}+b_{13}(1-b_{22})\\
b_{23}b_{31}+b_{21}(1-b_{33}) & (1-b_{11})(1-b_{33})-b_{13}b_{31} & b_{13}b_{21}+b_{23}(1-b_{11})\\
b_{21}b_{32}+b_{31}(1-b_{22}) & b_{12}b_{31}+b_{32}(1-b_{11}) & (1-b_{11})(1-b_{22})-b_{12}b_{21}
\end{bmatrix}
\begin{bmatrix}
\frac{4\alpha^2\sigma^2}{n^3}\\
\frac{2\alpha^2 \sigma^2}{n}\\
M_g
\end{bmatrix}.
\end{multline}
We now show (\ref{beta_1 condition}), (\ref{beta_2 condition}), and (\ref{gossip_convergence pre}) are satisfied under condition (\ref{alpha_ultimate_bound_gossip}).
First, relation (\ref{alpha_ultimate_bound_gossip}) implies that
\begin{align}
& 4\alpha^2 L^2\le \frac{n(1-\rho_{\bar{w}})}{12}, \label{alpha_bound_pre1}\\
& 2\alpha L+2\alpha^2 L^2\le  \frac{n(1-\rho_{\bar{w}})}{12}. \label{alpha_bound_pre2}
\end{align}
Therefore, from (\ref{beta_1 condition}) and (\ref{beta_2 condition}) we have
\begin{align}
& \beta_1=\frac{n(1-\rho_{\bar{w}})}{4\alpha}-4\alpha L^2\ge \frac{n(1-\rho_{\bar{w}})}{6\alpha}>0, \label{beta_1}\\
& \beta_2=\frac{n(1-\rho_{\bar{w}})}{4}-2\alpha L-2\alpha^2 L^2\ge  \frac{n(1-\rho_{\bar{w}})}{6}>0. \label{beta_2}
\end{align}
By (\ref{alpha_bound_pre1})-(\ref{beta_2}) and the fact that $\rho_{\bar{w}}\ge 1-2/n$ obtained from (\ref{bar W}), we have
\begin{subequations}
	\label{bounds on bs}
	\begin{align}
	b_{12}\le & \frac{2\alpha L^2}{\mu n^2}\left(1+\frac{1}{8}\right) \le \frac{9\alpha L^2}{4\mu n^2},\\
	b_{23}\le & \frac{2\alpha^2}{n}\left(6\eta+1\right),\\
	b_{31}= & \alpha L^3(8\alpha L+4)\le 6\alpha L^3,\\
	b_{32}\le & \frac{L^2}{n}\left(12\eta+\frac{9}{2}\right).
	\end{align}
\end{subequations}
Then, for relation (\ref{det mI-mA>0}) to hold, it is sufficient that
\begin{multline*}
\frac{1}{\Gamma}\frac{\alpha\mu}{2n}(1-\rho_{\bar{w}})^2\ge \frac{27(6\eta+1)\alpha^4L^5}{\mu n^3}+\frac{48(8\eta+3)\alpha^4 L^4}{n^4}+\frac{6(6\eta+1)(8\eta+3)\alpha^3\mu L^2}{n^3}
+\frac{12\alpha^3 L^3}{n^3}(1-\rho_{\bar{w}})\\
+\frac{9\alpha^3L^4}{\mu n^2}(1-\rho_{\bar{w}}).
\end{multline*}
In light of (\ref{alpha_bound_pre1}), $\alpha L\le n(1-\rho_{\bar{w}})/24$. We only need
\begin{multline*}
\frac{1}{\Gamma}\frac{\mu n^2}{2}(1-\rho_{\bar{w}})^2\ge \frac{9(6\eta+1)\alpha^2L^4 n}{8\mu}(1-\rho_{\bar{w}})+2(8\eta+3)\alpha^2 L^3(1-\rho_{\bar{w}})+6(6\eta+1)(8\eta+3)\alpha^2\mu L^2\\
+12\alpha^2 L^3(1-\rho_{\bar{w}})+\frac{9\alpha^2L^4 n}{\mu}(1-\rho_{\bar{w}}),
\end{multline*}
which gives
\begin{equation*}
\alpha\le \frac{2n(1-\rho_{\bar{w}})}{\sqrt{\Gamma }L}\left\{\left[27(2\eta+3) Q n+16(8\eta+9)\right]Q(1-\rho_{\bar{w}})+48(6\eta+1)(8\eta+3)+96Q(1-\rho_{\bar{w}})\right\}^{-1/2}.
\end{equation*}

We now derive the bounds for $\limsup_{k\rightarrow\infty}\bE[\|\ox_{k+1}-x^*\|^2]$ and $\limsup_{k\rightarrow\infty}\bE[\|\mx_{k+1}-\mathbf{1}\ox_{k+1}\|^2]$. By (\ref{gossip_convergence pre}) and (\ref{bounds on bs}),
\begin{multline*}
\limsup_{k\rightarrow\infty}\bE[\|\ox_{k+1}-x^*\|^2] \le \frac{\Gamma}{(\Gamma-1)(1-b_{11})(1-b_{22})(1-b_{33})}\\
\cdot\left\{[(1-b_{22})(1-b_{33})-b_{23}b_{32}]\frac{4\alpha^2\sigma^2}{n^3}+[b_{13}b_{32}+b_{12}(1-b_{33})]\frac{2\alpha^2 \sigma^2}{n}+[b_{12}b_{23}+b_{13}(1-b_{22})]M_g\right\}\\
\le \frac{2\Gamma n}{(\Gamma-1)\alpha\mu(1-\rho_{\bar{w}})^2}\Bigg\{\frac{\alpha^2\sigma^2(1-\rho_{\bar{w}})^2}{n^3}+\left[\frac{6\alpha^2L^2}{n^4}(8\eta+3)+\frac{9\alpha L^2}{8\mu n^2}(1-\rho_{\bar{w}})\right]\frac{2\alpha^2 \sigma^2}{n}\\
+\left[\frac{9(6\eta+1)\alpha^3 L^2}{2\mu n^3}+\frac{2\alpha^2}{n^3}(1-\rho_{\bar{w}})\right]\frac{9}{2}\sigma^2\Bigg\}\\
\le \frac{2\Gamma n\sigma^2}{(\Gamma-1)\alpha\mu(1-\rho_{\bar{w}})^2}\left[\frac{10\alpha^2(1-\rho_{\bar{w}})}{n^3}+\frac{21(6\eta+1)\alpha^3 L^2}{\mu n^3}\right]
=\frac{\Gamma}{(\Gamma-1)}\frac{\sigma^2}{n^2}\left[\frac{20\alpha}{\mu (1-\rho_{\bar{w}})}+\frac{42(6\eta+1)\alpha^2 L^2}{\mu^2 (1-\rho_{\bar{w}})^2}\right].
\end{multline*}
\begin{multline*}
\limsup_{k\rightarrow\infty}\bE[\|\mx_k-\mathbf{1}\ox_k\|^2]\le \frac{\Gamma}{(\Gamma-1)(1-b_{11})(1-b_{22})(1-b_{33})}\\
\cdot\left\{[b_{23}b_{31}+b_{21}(1-b_{33})]\frac{4\alpha^2\sigma^2}{n^3}+[(1-b_{11})(1-b_{33})-b_{13}b_{31}]\frac{2\alpha^2 \sigma^2}{n}+[b_{13}b_{21}+b_{23}(1-b_{11})]M_g\right\}\\
\le \frac{2\Gamma n}{(\Gamma-1)\alpha\mu(1-\rho_{\bar{w}})^2}\Bigg\{\left[\frac{12(6\eta+1)\alpha^3 L^3}{n}+4\alpha^2 L^2(1-\rho_{\bar{w}})\right]\frac{4\alpha^2\sigma^2}{n^3}\\
+\left[\frac{\alpha\mu(1-\rho_{\bar{w}})}{n}-\frac{24\alpha^3L^3}{n^3}\right]\frac{2\alpha^2 \sigma^2}{n}+\left[\frac{32\alpha^4 L^2}{n^3}+\frac{4(6\eta+1)\alpha^3\mu}{n^2}\right]\frac{17}{4}\sigma^2\Bigg\}\\
\le \frac{2\Gamma n\sigma^2}{(\Gamma-1)\alpha\mu(1-\rho_{\bar{w}})^2}\left[\frac{18(6\eta+1)\alpha^3\mu}{n^2}+\frac{136\alpha^4 L^2}{n^3}\right]=\frac{4\Gamma \sigma^2}{(\Gamma-1)(1-\rho_{\bar{w}})^2}\left[\frac{9(6\eta+1)\alpha^2}{n}+\frac{72\alpha^3 L^2}{\mu n^2}\right].
\end{multline*}

\subsection{Proof of Corollary \ref{cor: speed_gossip}}
\label{subsec: proof cor_speed_gossip}
The characteristic function of $\mA_g$ is 
\begin{multline}
\text{det}(\lambda\mI-\mA_g)=(\lambda-b_{11})(\lambda-b_{22})(\lambda-b_{33})-b_{12}b_{23}b_{31}-b_{13}b_{21}b_{32}-(\lambda-b_{11})b_{23}b_{32}-(\lambda-b_{22})b_{13}b_{31}\\
-(\lambda-b_{33})b_{12}b_{21}.
\end{multline}
By (\ref{det mI-mA>0}),
\begin{multline}
\text{det}(\lambda\mI-\mA_g)\ge(\lambda-b_{11})(\lambda-b_{22})(\lambda-b_{33})+(1-\lambda)b_{23}b_{32}+(1-\lambda)b_{13}b_{31}+(1-\lambda)b_{12}b_{21}\\
-\frac{1}{\Gamma}(1-b_{11})(1-b_{22})(1-b_{33})\ge(\lambda-b_{11})(\lambda-b_{22})(\lambda-b_{33})
-\frac{1}{\Gamma}(1-b_{11})(1-b_{22})(1-b_{33}).
\end{multline}
Let $\lambda=1-\epsilon$ for some $\epsilon\in(0,2\alpha\mu/n)$ that satisfies
\begin{equation*}
\text{det}(\lambda\mI-\mA_g)\ge\left(\frac{2\alpha\mu}{n}-\epsilon\right)\left[\frac{1-\rho_{\bar{w}}}{2}-\epsilon\right]^2- \frac{1}{\Gamma}\frac{2\alpha\mu}{n}\frac{(1-\rho_{\bar{w}})^2}{4}\ge 0.
\end{equation*}
Under condition (\ref{alpha_ultimate_bound_gossip}), it suffices that 
\begin{equation*}
\epsilon\le \frac{(2\Gamma-3)}{\Gamma}\frac{\alpha\mu}{n}.
\end{equation*}
Denote $\tilde{\lambda}=1-\frac{(2\Gamma-3)}{\Gamma}\frac{\alpha\mu}{n}$. We have $\text{det}(\tilde{\lambda}\mI-\mA_g)\ge 0$, and therefore $\rho(\mA_g)\le \tilde{\lambda}$.

\section{Numerical Example}
\label{sec: simulation}

In this section, we provide a numerical example to illustrate our theoretic findings. 
Consider the \emph{on-line} Ridge regression problem, i.e.,
\begin{equation}
\label{Ridge Regression}
\min_{x\in \mathbb{R}^{p}}f(x)=\frac{1}{n}\sum_{i=1}^nf_i(x)\left(=\mathbb{E}_{u_i,v_i}\left[\left(u_i^{\T} x-v_i\right)^2+\rho\|x\|^2\right]\right),
\end{equation}
where $\rho>0$ is a penalty parameter.
For each agent $i$, samples in the form of $(u_i,v_i)$ are gathered continuously with $u_i\in\mathbb{R}^p$ representing the features and $v_i\in\mathbb{R}$ being the observed outputs. We assume that each $u_i\in[0.3,0.4]^p$ is uniformly distributed, and $v_i$ is drawn according to $v_i=u_i^{\T} \tilde{x}_i+\varepsilon_i$, where $\tilde{x}_i$ are predefined parameters evenly located in $[0,10]^p$, and $\varepsilon_i$ are independent Gaussian noises with mean $0$ and variance $1$.
Given a pair $(u_i,v_i)$, agent $i$ can compute an estimated gradient of $f_i(x)$: $g_i(x,u_i,v_i)=2(u_i^{\T}x -v_i)u_i+2\rho x$, which is unbiased.
Problem (\ref{Ridge Regression}) has a unique solution $x^*$ given by $x^*=(\sum_{i=1}^n\mathbb{E}_{u_i}[u_iu_i^{\T}]+n\rho\mathbf{I})^{-1}\sum_{i=1}^n\mathbb{E}_{u_i}[u_iu_i^{\T}]\tilde{x}_i$.

In addition to DSGT, GSGT and CSG, we consider the following distributed stochastic gradient (DSG) algorithm, which is similar to the ones studied in \cite{jakovetic2018convergence,lian2017can}:
\begin{equation}
\label{DSG}
\mx_{k+1} = \mW\mx_k-\alpha G(\mx_k,\boldsymbol{\xi}_k).
\end{equation}
Noticing that some existing algorithms for deterministic distributed optimization can also be adapted to the stochastic gradient setting, e.g., EXTRA \cite{shi2015extra} and decentralized ADMM (DLM) \cite{ling2015dlm}, we also include them in our experiments for comparison.

In the experiments, we consider $3$ instances with $p=20$ and $n\in\{10,25,100\}$, respectively. Under each instance, we let $\mx_0=\mathbf{0}$ and the penalty parameter $\rho=0.1$. For the distributed methods, we assume that $n$ agents constitute a random network, in which each two agents are linked with probability $0.4$. The Metropolis rule is applied to define the weights $w_{ij}$ (and $\pi_{ij}$) where applicable \cite{sayed2014adaptive}:
\begin{equation*}
w_{ij}=\begin{cases}
1/\max\{\degree(i),\degree(j)\} & \text{if }i\in \mathcal{N}_i,  \\
1- \sum_{j\in\mathcal{N}_i}w_{ij} & \text{if }i=j,\\
0 & \text{otherwise}.
\end{cases}
\end{equation*}
For EXTRA, we choose $\tilde{\mW}=\frac{\mI+\mW}{2}$ as recommended by \cite{shi2015extra}.
For DLM, we tune the free parameters to make its convergence speed comparable to the other algorithms.
In each instance, we use two different stepsizes $\alpha=5\times10^{-3}$ and $\alpha=5\times10^{-2}$, respectively. We run the simulations $50$ times for DSGT, CSG, DSG, EXTRA and DLM and $100$ times for GSGT and average the results to approximate the expected errors.

In Figure \ref{fig: comparison} (a)-(f), we compare the average performances of DSGT, GSGT, CSG, DSG, EXTRA and DLM with the same parameters. It can be seen that DSGT and CSG are comparable in their convergence speeds as well as the ultimate error bounds (almost indistinguishable). EXTRA and DLM are  worse than DSGT and CSG in their final error bounds. The performance gap increases with the network size and the stepsize.
GSGT is slower as expected but still reaches a comparable error level under small stepsize $\alpha=5\times10^{-3}$. In addition, the error bounds for DSGT, GSGT and CSG decrease in $n$ as expected from our theoretical analysis. The performance of DSG is not favorable given its largest final errors.\footnote{DSG still holds the advantage over DSGT in the early stage for achieving a similar convergence speed  with lower communication and storage costs.}

In Figure \ref{fig: comparison} (g)(h)(i) (respectively, (j)(k)(l)), we further compare the solutions obtained under DSGT and GSGT with the same number of stochastic gradient evaluations (respectively, inter-node communications) under small stepsize $\alpha=5\times10^{-3}$. 
We see the two methods are comparable in their speeds of convergence w.r.t the number of gradient evaluations. However, GSGT is much faster than DSGT assuming the same number of communications. These numerical results verified our arguments in Section \ref{subsec: comparison}.
\begin{figure}
	\centering
	\subfigure[Instance $(p,n)=(20,10)$, $\alpha=5\times10^{-3}$.]{\includegraphics[width=2.1in]{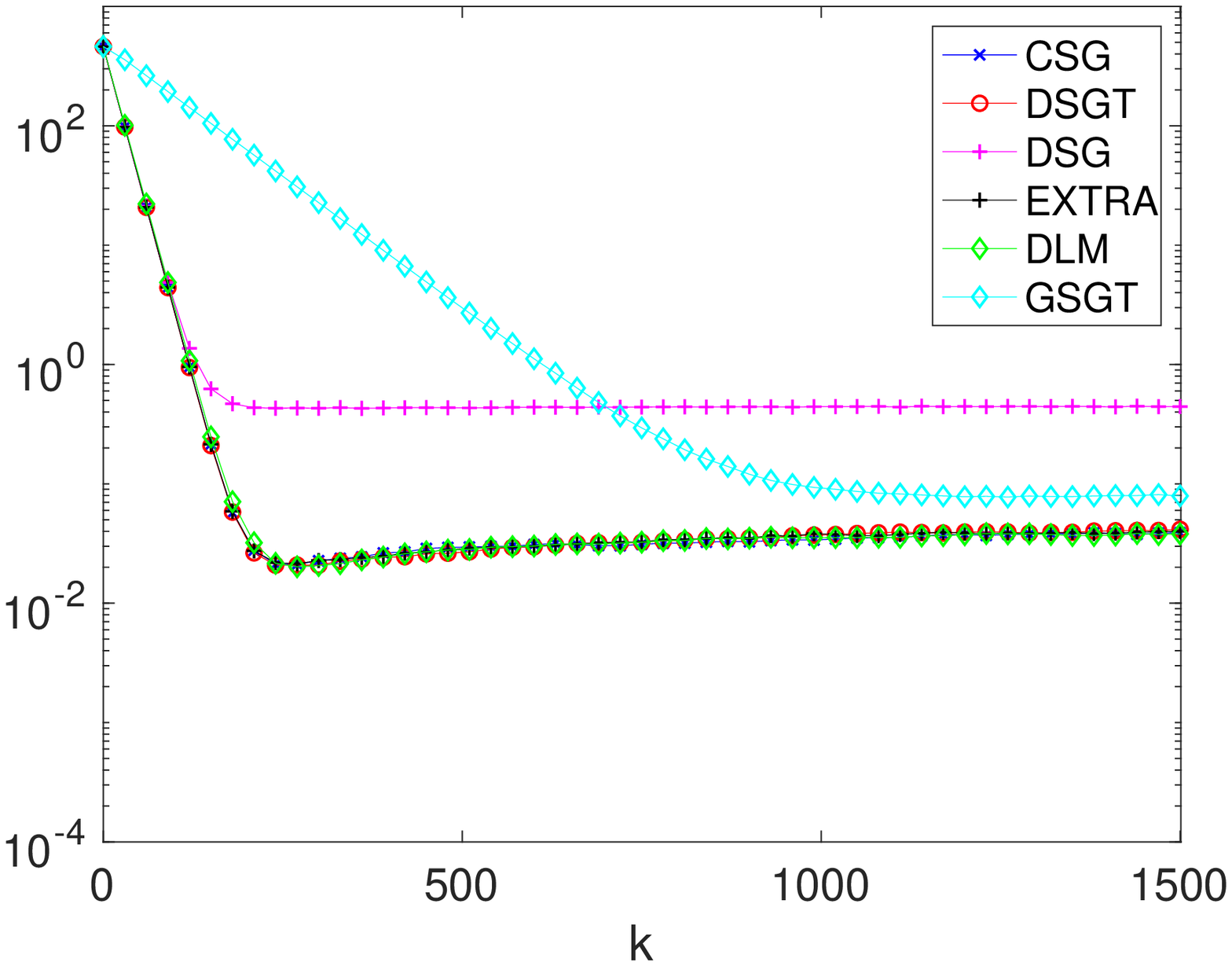}} 
	\subfigure[Instance $(p,n)=(20,25)$, $\alpha=5\times10^{-3}$.]{\includegraphics[width=2.1in]{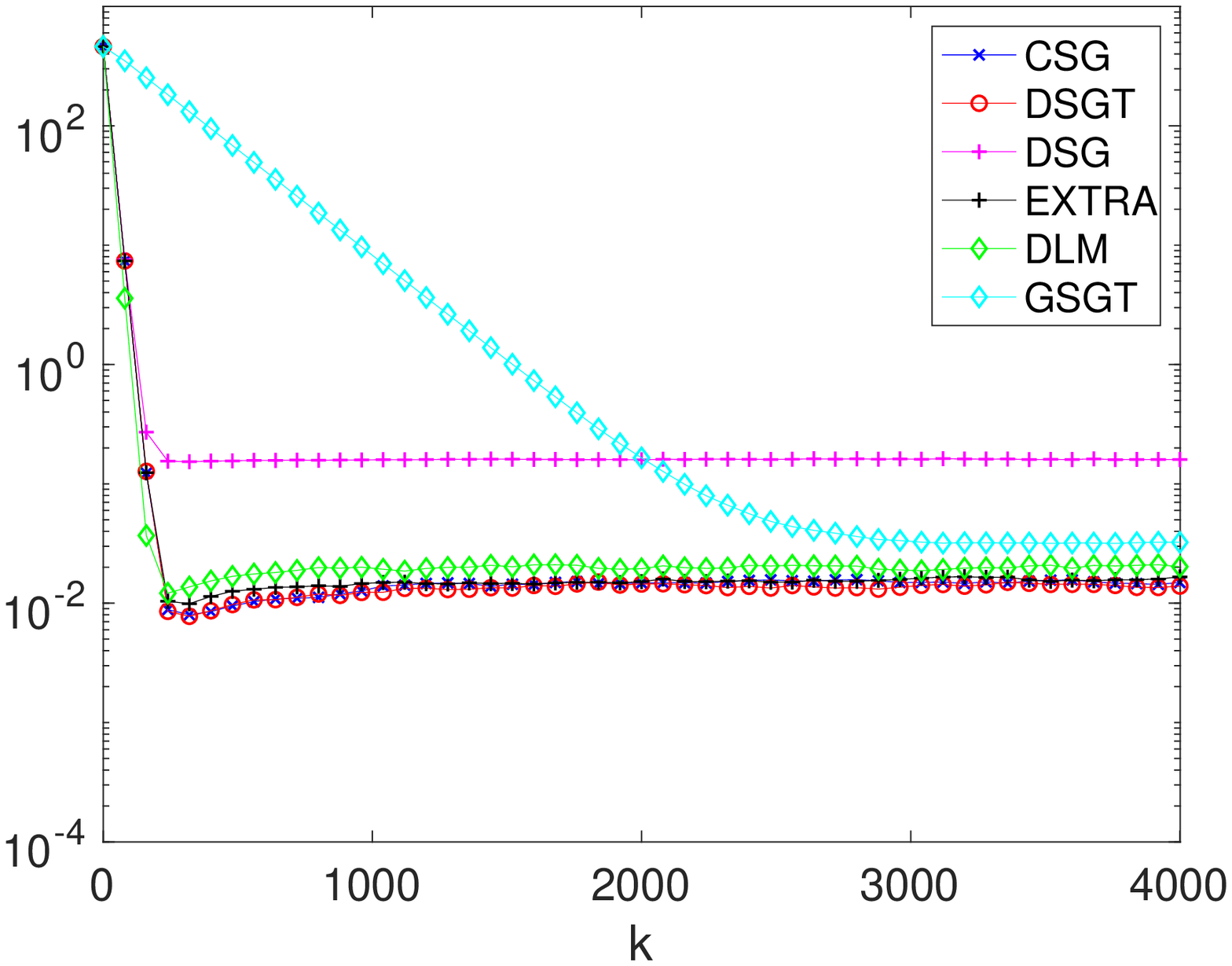}} 
	\subfigure[Instance $(p,n)=(20,100)$, $\alpha=5\times10^{-3}$.]{\includegraphics[width=2.1in]{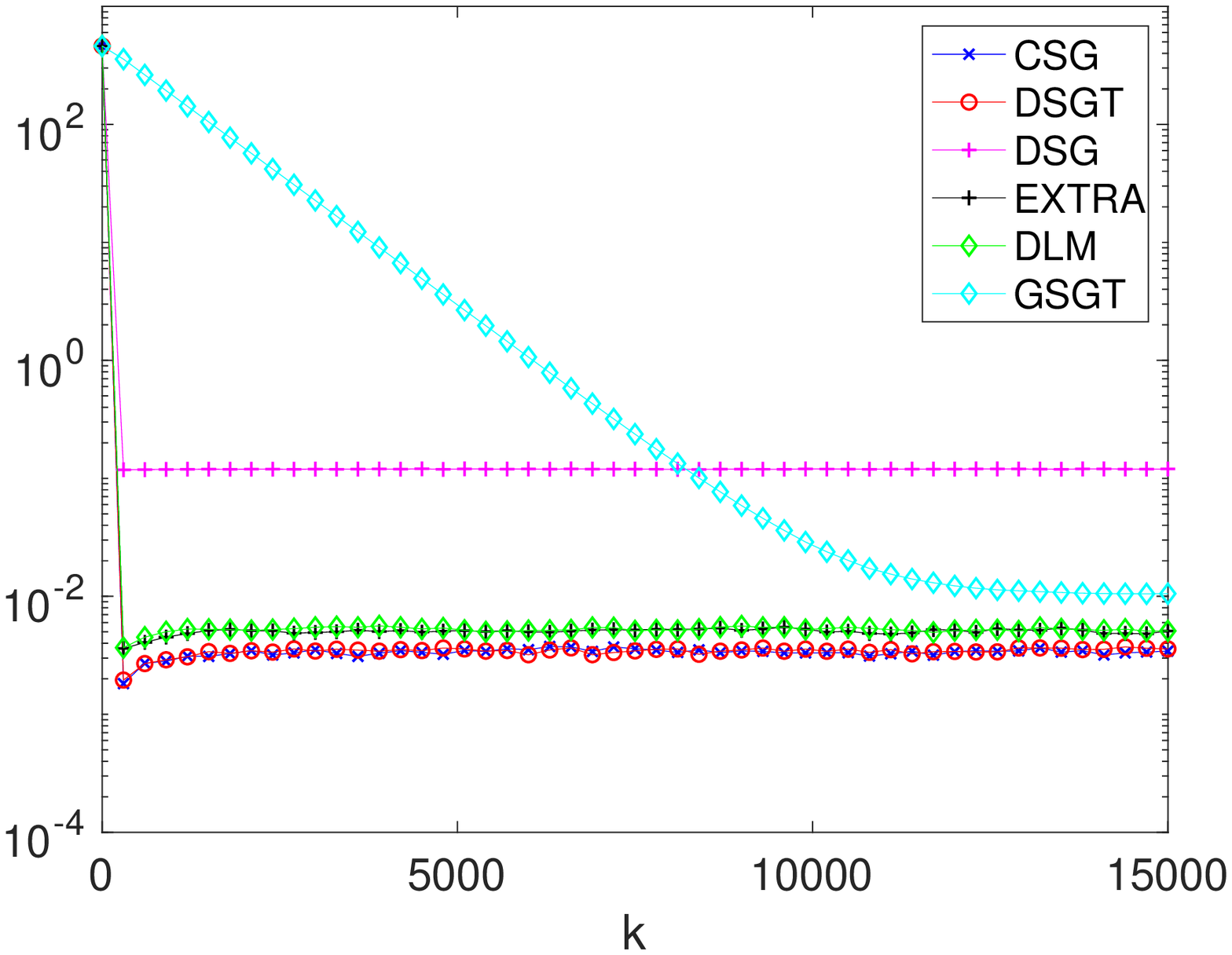}} 
	
	\subfigure[Instance $(p,n)=(20,10)$, $\alpha=5\times10^{-2}$.]{\includegraphics[width=2.1in]{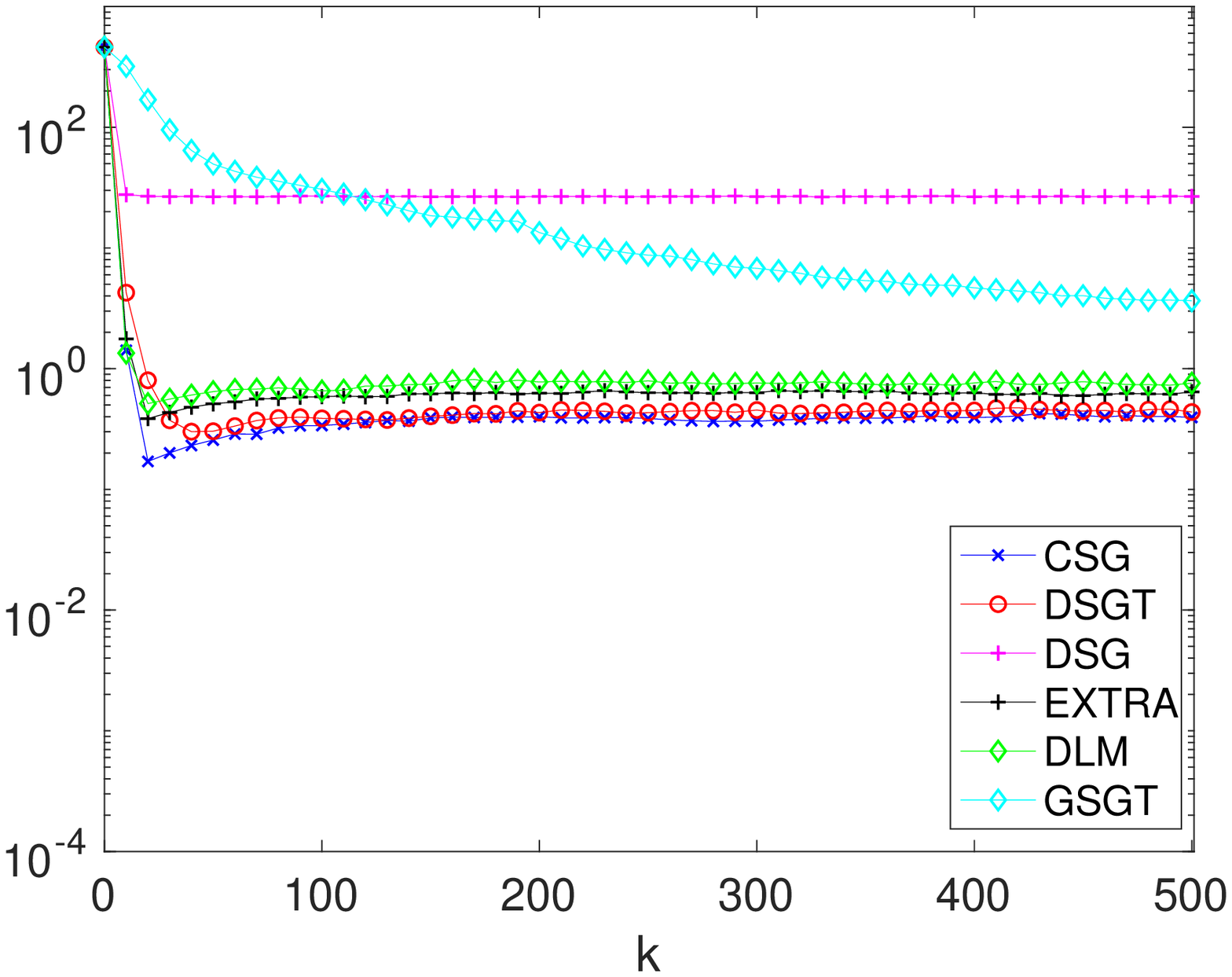}} 
	\subfigure[Instance $(p,n)=(20,25)$, $\alpha=5\times10^{-2}$.]{\includegraphics[width=2.1in]{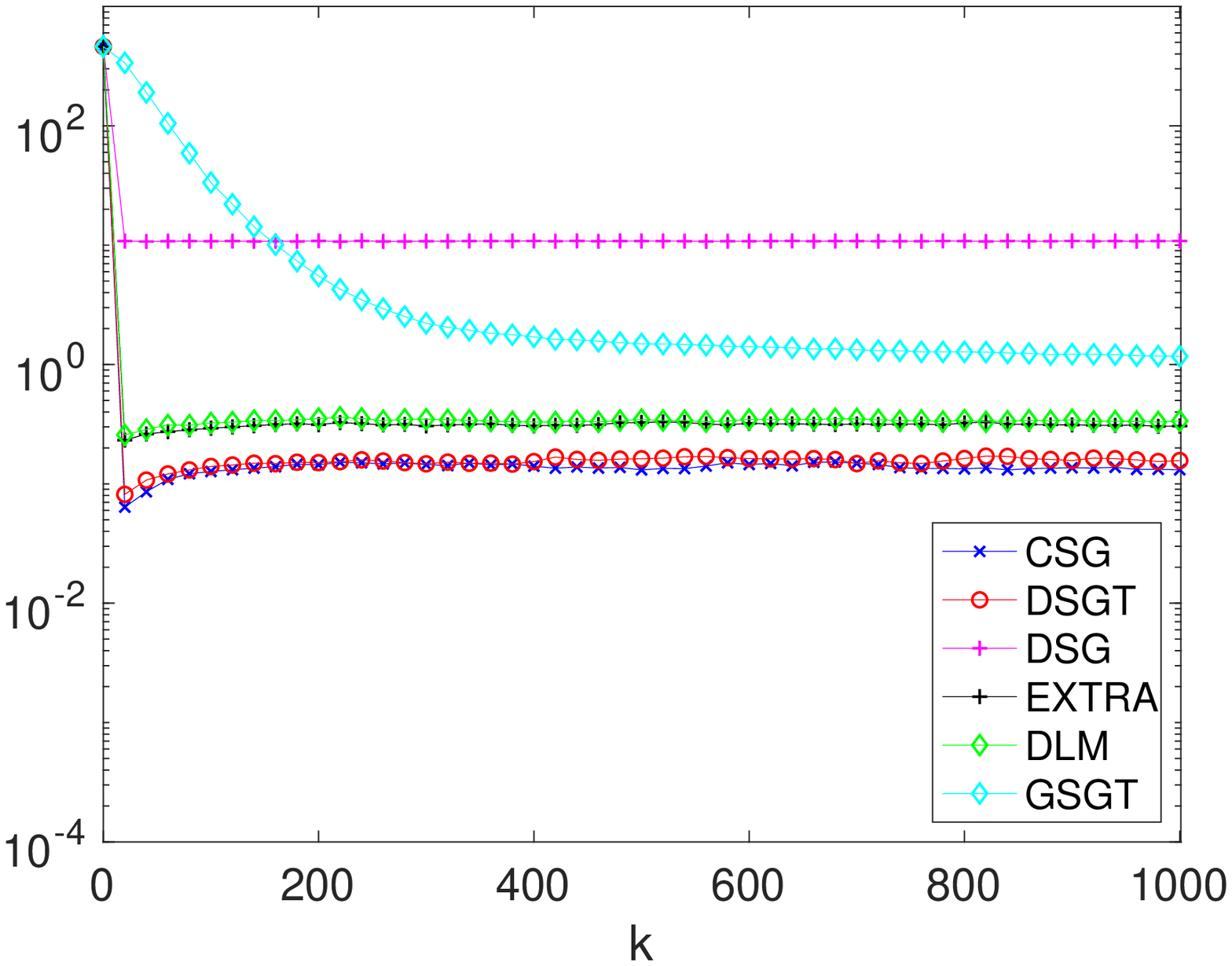}} 
	\subfigure[Instance $(p,n)=(20,100)$, $\alpha=5\times10^{-2}$.]{\includegraphics[width=2.1in]{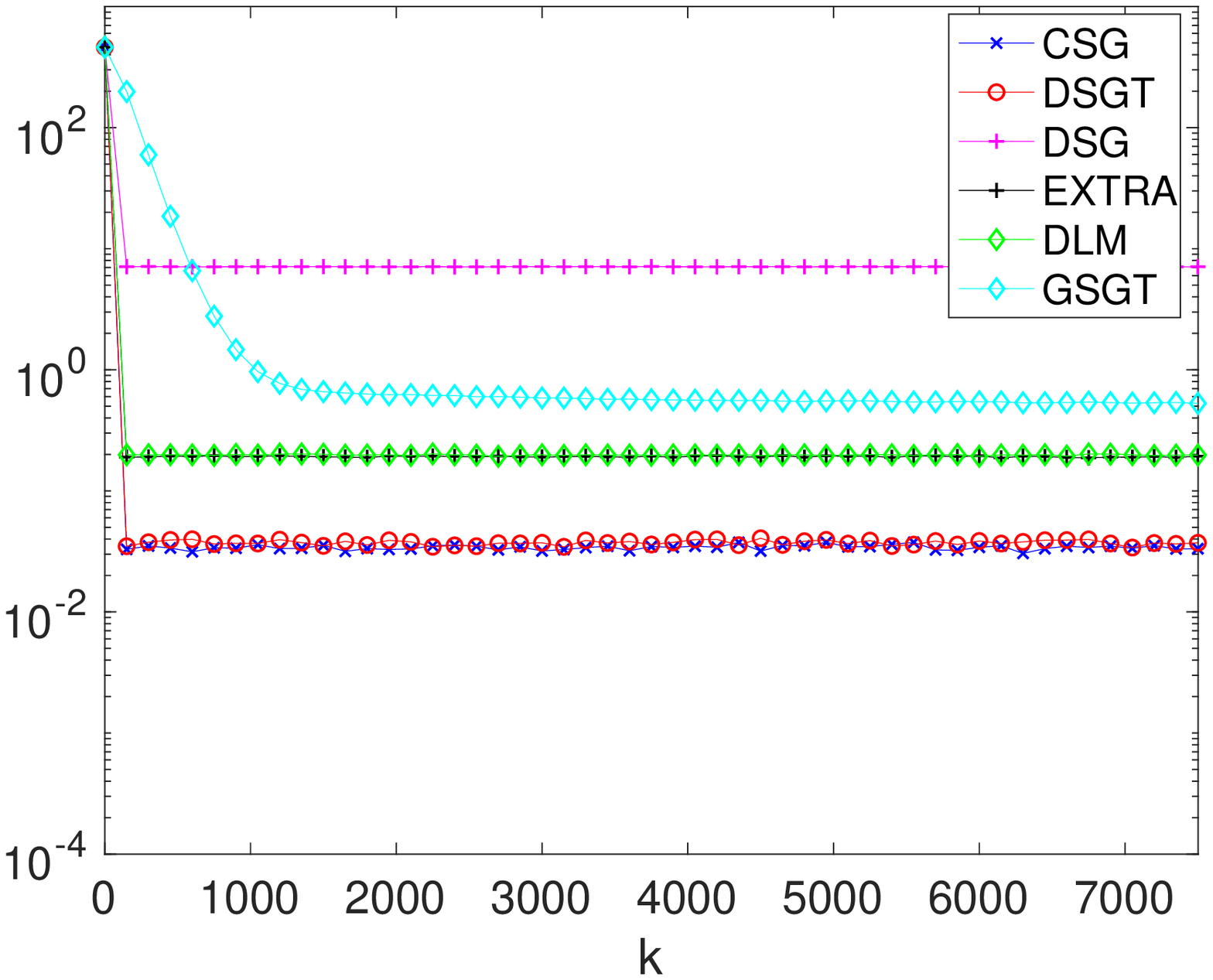}} 
	
	\subfigure[Instance $(p,n)=(20,10)$, $\alpha=5\times10^{-3}$.]{\includegraphics[width=2.1in]{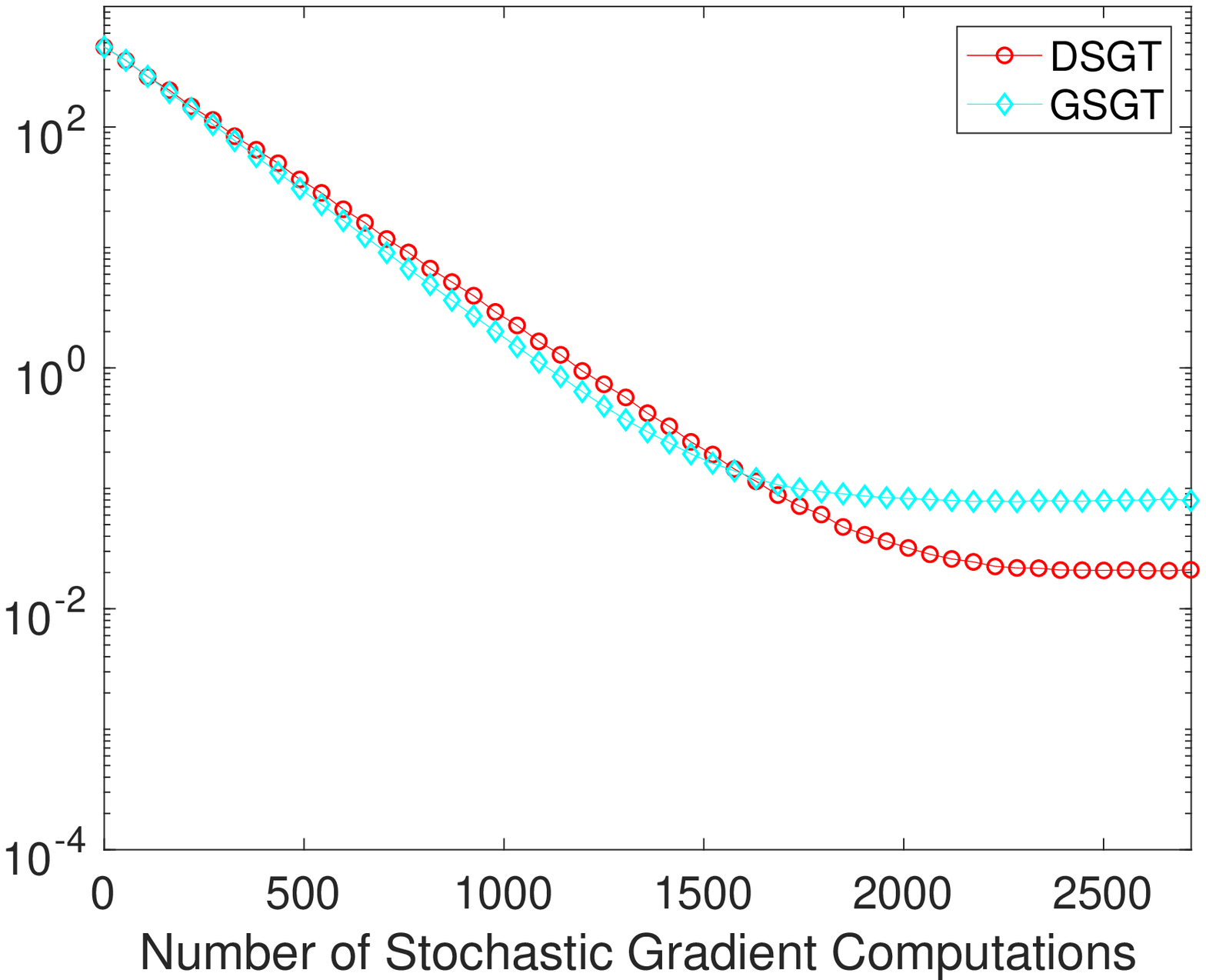}}
	\subfigure[Instance $(p,n)=(20,25)$, $\alpha=5\times10^{-3}$.]{\includegraphics[width=2.1in]{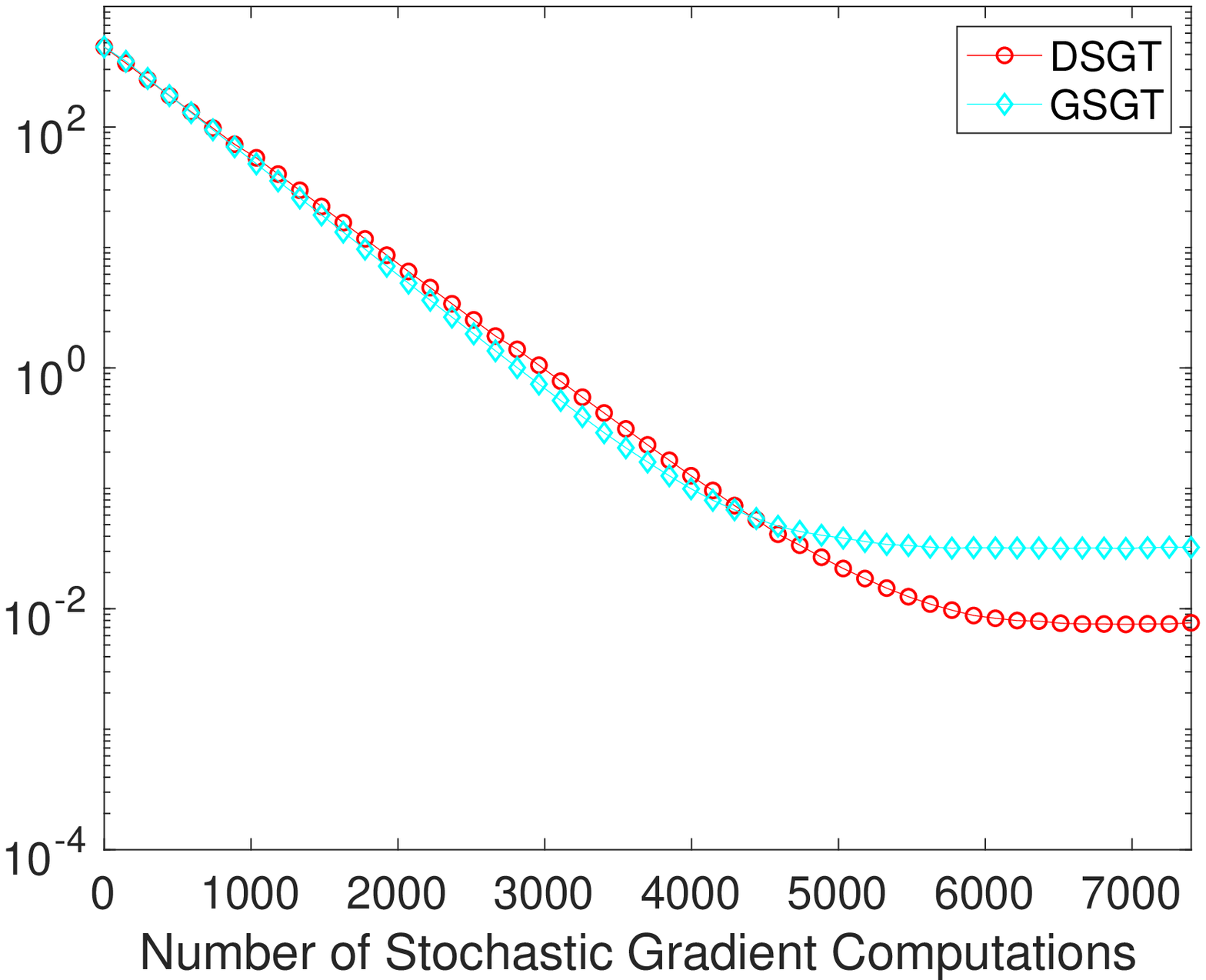}}
	\subfigure[Instance $(p,n)=(20,100)$, $\alpha=5\times10^{-3}$.]{\includegraphics[width=2.1in]{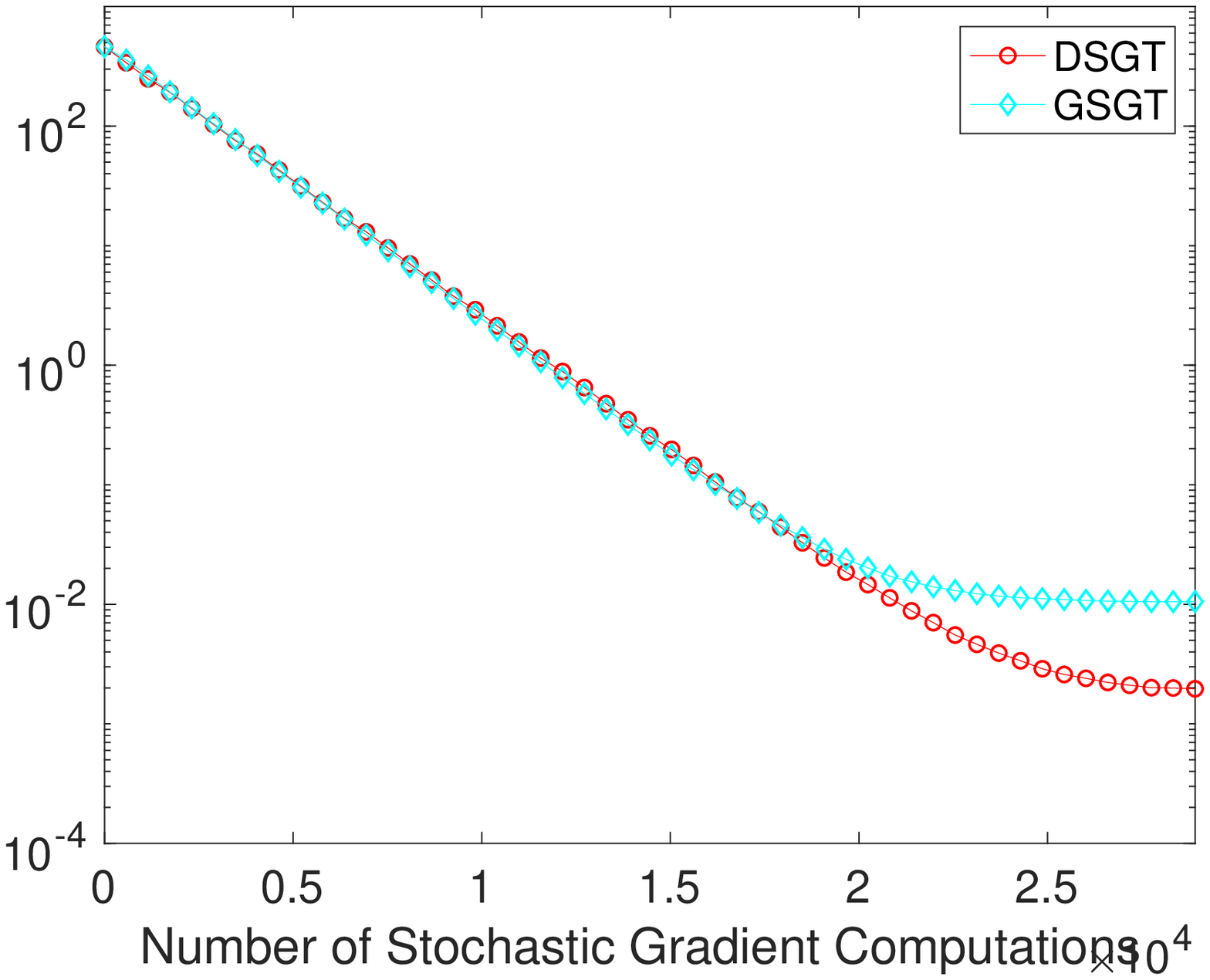}}
	
	\subfigure[Instance $(p,n)=(20,10)$, $\alpha=5\times10^{-3}$.]{\includegraphics[width=2.1in]{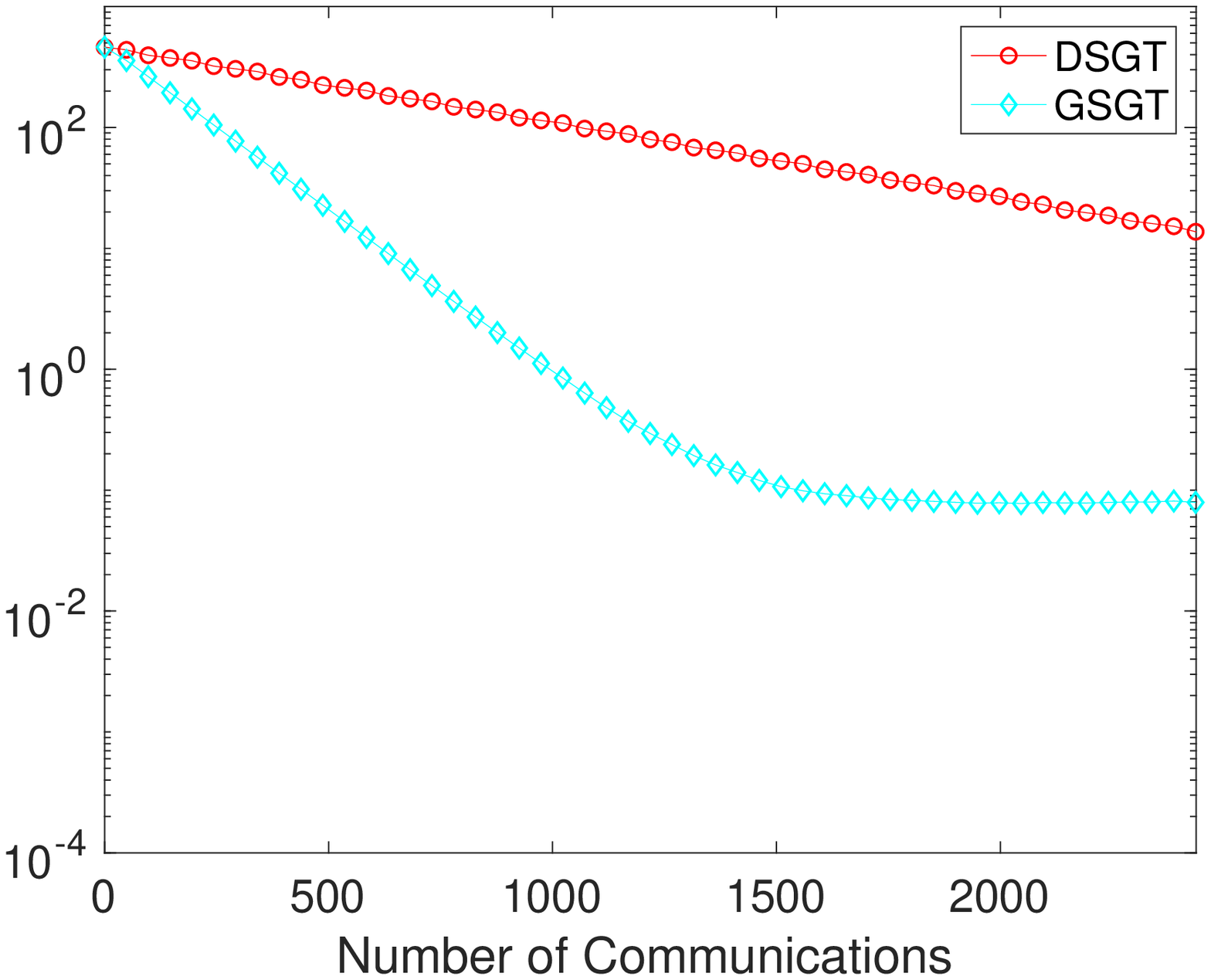}} 
	\subfigure[Instance $(p,n)=(20,25)$, $\alpha=5\times10^{-3}$.]{\includegraphics[width=2.1in]{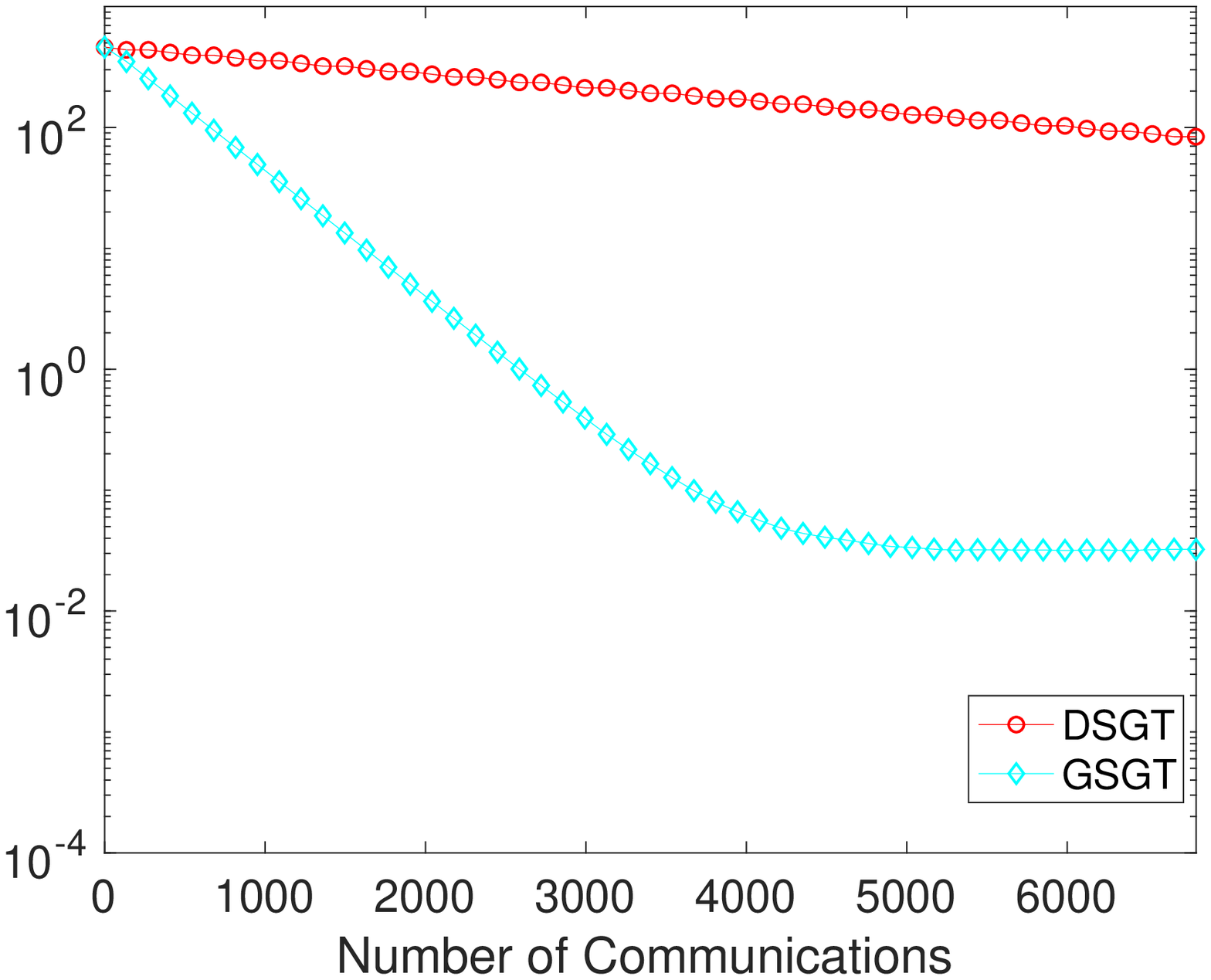}}
	\subfigure[Instance $(p,n)=(20,100)$, $\alpha=5\times10^{-3}$.]{\includegraphics[width=2.1in]{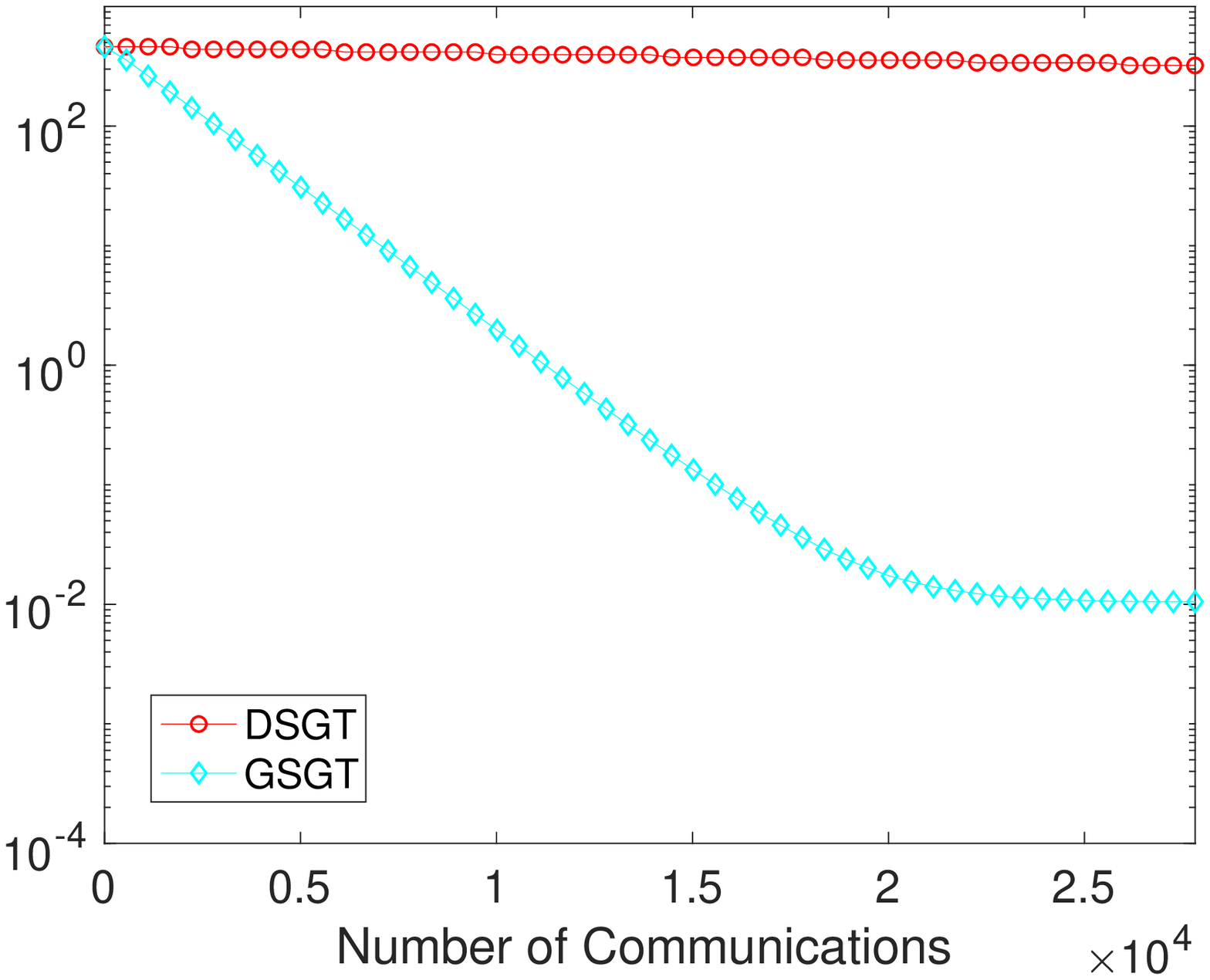}}
	\caption{Performance comparison between DSGT, GSGT, CSG, DSG, EXTRA \cite{shi2015extra} and DLM \cite{ling2015dlm} for on-line Ridge regression. 
		For CSG, the plots show $\|x_k-x^*\|^2$.
	For the other methods, the plots show $\frac{1}{n}\sum_{i=1}^n\|x_{i,k}-x^*\|^2$. 
	}
	\label{fig: comparison}
\end{figure}

\section{Conclusions and Future Work}
\label{sec: conclusion}
This paper considers distributed multi-agent optimization over a network, where each agent only has access to inexact gradients of its local cost function. 
We propose a distributed stochastic gradient tracking method (DSGT) and show that the iterates obtained by each agent, using a constant stepsize value, reach a neighborhood of the optimum (in expectation) exponentially fast. More importantly, in a limit, the error bounds for the distances between the iterates and the optimal solution decrease in the network size, which is comparable with the performance of a centralized stochastic gradient algorithm. With a diminishing stepsize, the method exhibits the optimal $\mathcal{O}(1/k)$ rate of convergence. 
In the second part of this paper, we discuss a gossip-like stochastic gradient tracking method (GSGT) that is communication-efficient. Under a well-connected interaction graph, we show GSGT requires fewer communications than DSGT to reach an $\epsilon$ error level. Finally, we provide a numerical example that demonstrates the effectiveness of both algorithms.
In our future work, we will deal with directed and/or time-varying interaction graphs among agents. We also plan to explore other more flexible randomized algorithms such as broadcast-based protocols with possible transmission failures.




\section{APPENDIX}

\subsection{Proof of Lemma \ref{lem: Main_Inequalities}}
\label{proof lem: Main_Inequalities}
By (\ref{eq: x_i,k}),
\begin{equation}
\ox_{k+1}=\ox_k-\alpha \oy_k.
\end{equation}
It follows that
\begin{equation}
\|\ox_{k+1}-x^*\|^2=\|\ox_k-\alpha \oy_k-x^*\|^2
=\|\ox_k-x^*\|^2-2\alpha\langle \ox_k-x^*,\oy_k \rangle+\alpha^2\|\oy_k\|^2.
\end{equation}
Notice that $\bE[\oy_k\mid \mathcal{F}_k]=h(\mx_k)$, and
\begin{equation*}
\bE[\|\oy_k\|^2\mid\mathcal{F}_k]=\bE[\|\oy_k-h(\mx_k)\|^2\mid\mathcal{F}_k]+\|h(\mx_k)\|^2.
\end{equation*}
We have
\begin{multline}
\bE[\|\ox_{k+1}-x^*\|^2\mid \mathcal{F}_k]
=\|\ox_k-x^*\|^2-2\alpha\langle \ox_k-x^*,h(\mx_k) \rangle+\alpha^2\bE[\|\oy_k-h(\mx_k)\|^2\mid\mathcal{F}_k]+\alpha^2\|h(\mx_k)\|^2\\
\le \|\ox_k-x^*\|^2-2\alpha\langle \ox_k-x^*,h(\mx_k) \rangle+\alpha^2\|h(\mx_k)\|^2+\frac{\alpha^2\sigma^2}{n},
\end{multline}
where the inequality follows from Lemma \ref{lem: oy_k-h_k}. Denote $\lambda=1-\alpha\mu$. In light of Lemma \ref{lem: strong_convexity},
\begin{align*}
& \bE[\|\ox_{k+1}-x^*\|^2\mid \mathcal{F}_k]\\
\le &\|\ox_k-x^*\|^2-2\alpha\langle \ox_k-x^*,\nabla f(\ox_k)\rangle+2\alpha\langle \ox_k-x^*,\nabla f(\ox_k)-h(\mx_k) \rangle+\alpha^2\|\nabla f(\ox_k)-h(\mx_k)\|^2\\
&+\alpha^2\|\nabla f(\ox_k)\|^2-2\alpha^2\langle \nabla f(\ox_k),\nabla f(\ox_k)-h(\mx_k)\rangle+\frac{\alpha^2\sigma^2}{n}\\
= &\|\ox_k-\alpha \nabla f(\ox_k)-x^*\|^2+\alpha^2\| \nabla f(\ox_k)-h(\mx_k)\|^2+\frac{\alpha^2\sigma^2}{n}+2\alpha \langle \ox_k-\alpha \nabla f(\ox_k)-x^*, \nabla f(\ox_k)-h(\mx_k)\rangle\\
\le &\lambda^2\|\ox_k-x^*\|^2+2\alpha \lambda\| \ox_k-x^*\| \| \nabla f(\ox_k)-h(\mx_k)\|+\alpha^2\| \nabla f(\ox_k)-h(\mx_k)\|^2+\frac{\alpha^2\sigma^2}{n}\\
\le &\lambda^2\|\ox_k-x^*\|^2+\frac{2\alpha \lambda L}{\sqrt{n}}\| \ox_k-x^*\|\| \mx_k-\mathbf{1}\ox_k\|+\frac{\alpha^2 L^2}{n}\|  \mx_k-\mathbf{1}\ox_k\|^2+\frac{\alpha^2\sigma^2}{n}\\
\le &\lambda^2\|\ox_k-x^*\|^2+\alpha\left(\lambda^2\mu\| \ox_k-x^*\|^2+\frac{L^2}{\mu n}\| \mx_k-\mathbf{1}\ox_k\|^2 \right)+\frac{\alpha^2 L^2}{n}\|  \mx_k-\mathbf{1}\ox_k\|^2+\frac{\alpha^2\sigma^2}{n}\\
= &\lambda^2\left(1+\alpha\mu\right)\|\ox_k-x^*\|^2
+\frac{\alpha L^2}{\mu n}\left(1+\alpha\mu\right)\|  \mx_k-\mathbf{1}\ox_k\|^2
+\frac{\alpha^2\sigma^2}{n}\\
\le &\left(1-\alpha\mu\right)\|\ox_k-x^*\|^2+\frac{\alpha L^2}{\mu n}\left(1+\alpha\mu\right)\|  \mx_k-\mathbf{1}\ox_k\|^2
+\frac{\alpha^2\sigma^2}{n}.
\end{align*}
Relation (\ref{Second_Main_Inequality}) follows from the following argument:
\begin{align*}
\|\mx_{k+1}-\mathbf{1}\ox_{k+1}\|^2= & \|W\mx_k-\alpha W\my_k-\mathbf{1}\ox_k+\alpha\mathbf{1}\oy_k\|^2\\
\le & \|W\mx_k-\mathbf{1}\ox_k\|^2-2\alpha\langle W\mx_k-\mathbf{1}\ox_k, W\my_k-\mathbf{1}\oy_k\rangle+\alpha^2\|W\my_{k}-\mathbf{1}\oy_k\|^2\\
\le & \rho_w^2\|\mx_k-\mathbf{1}\ox_k\|^2+\alpha\rho_w^2\left[\frac{(1-\rho_w^2)}{2\alpha\rho_w^2}\|\mx_k-\mathbf{1}\ox_k\|^2+\frac{2\alpha\rho_w^2}{(1-\rho_w^2)}\|\my_{k}-\mathbf{1}\oy_k\|^2\right]+\alpha^2\rho_w^2\|\my_{k}-\mathbf{1}\oy_k\|^2\\
\le & \frac{(1+\rho_w^2)}{2}\|\mx_k-\mathbf{1}\ox_k\|^2+\alpha^2\frac{(1+\rho_w^2)\rho_w^2}{(1-\rho_w^2)}\|\my_{k}-\mathbf{1}\oy_k\|^2,
\end{align*}
where we used Lemma \ref{lem: spectral norm}.

To prove (\ref{Third_Main_Inquality}), we need some preparations first. For ease of exposition we will write $G_k:=G(\mx_k,\boldsymbol{\xi}_k)$ and $\nabla_k:=\nabla F(\mx_k)$ for short. From (\ref{eq: x_k}) and Lemma \ref{lem: spectral norm},
\begin{align*}
\|\my_{k+1}-\mathbf{1}\oy_{k+1}\|^2
= & \|W\my_k+G_{k+1}-G_k-\mathbf{1}\oy_k+\mathbf{1}(\oy_k-\oy_{k+1})\|^2\\
= & |W\my_k-\mathbf{1}\oy_k\|^2+\|G_{k+1}-G_k\|^2+n\|\oy_k-\oy_{k+1}\|^2+2\langle W\my_k-\mathbf{1}\oy_k,G_{k+1}-G_k\rangle\\
&+2\langle W\my_k-\mathbf{1}\oy_k,\mathbf{1}(\oy_k-\oy_{k+1})\rangle+2\langle G_{k+1}-G_k,\mathbf{1}(\oy_k-\oy_{k+1})\rangle\\
= &\|W\my_k-\mathbf{1}\oy_k\|^2+\|G_{k+1}-G_k\|^2-n\|\oy_k-\oy_{k+1}\|^2+2\langle W\my_k-\mathbf{1}\oy_k,G_{k+1}-G_k\rangle\\
\le &\rho_w^2\|\my_k-\mathbf{1}\oy_k\|^2+\|G_{k+1}-G_k\|^2+2\langle W\my_k-\mathbf{1}\oy_k,G_{k+1}-G_k\rangle.
\end{align*}
Notice that 
\begin{align*}
\bE[\|G_{k+1}-G_k\|^2\mid \mathcal{F}_k]= & \bE[\|\nabla_{k+1}-\nabla_k \|^2\mid \mathcal{F}_k]
+2\bE[\langle \nabla_{k+1}-\nabla_k , G_{k+1}-\nabla_{k+1}-G_k+\nabla_k \rangle \mid \mathcal{F}_k]\\
&+\bE[\|G_{k+1}-\nabla_{k+1}-G_k+\nabla_k \|^2\mid \mathcal{F}_k]\\
\le & \bE[\|\nabla_{k+1}-\nabla_k \|^2\mid \mathcal{F}_k]+2\bE[\langle \nabla_{k+1}, -G_k+\nabla_k \rangle \mid \mathcal{F}_k]+2n\sigma^2
\end{align*}
by Assumption \ref{asp: gradient samples},
and
\begin{multline*}
\bE[\langle W\my_k-\mathbf{1}\oy_k,G_{k+1}-G_k\rangle\mid \mathcal{F}_k]
= \bE[\langle W\my_k-\mathbf{1}\oy_k,\nabla_{k+1}-G_k\rangle\mid \mathcal{F}_k]\\
= \bE[\langle W\my_k-\mathbf{1}\oy_k,\nabla_{k+1}-\nabla_k \rangle\mid \mathcal{F}_k]
+\bE[\langle W\my_k-\mathbf{1}\oy_k, -G_k+\nabla_k \rangle\mid \mathcal{F}_k].
\end{multline*}
We have
\begin{multline}
\label{my_k+1-oy_k+1 pre}
\bE[\|\my_{k+1}-\mathbf{1}\oy_{k+1}\|^2\mid\mathcal{F}_k]
\le \rho_w^2\bE[\|\my_k-\mathbf{1}\oy_k\|^2\mid \mathcal{F}_k]
+\bE[\|\nabla_{k+1}-\nabla_k \|^2\mid \mathcal{F}_k]+2\bE[\langle \nabla_{k+1}, -G_k+\nabla_k \rangle \mid \mathcal{F}_k]\\
+2\bE[\langle W\my_k-\mathbf{1}\oy_k,\nabla_{k+1}-\nabla_k \rangle\mid \mathcal{F}_k]
+2\bE[\langle W\my_k-\mathbf{1}\oy_k, -G_k+\nabla_k \rangle\mid \mathcal{F}_k]+2n\sigma^2.
\end{multline}
Two additional lemmas are in hand.
\begin{lemma}
	\label{lem: 3 nablas}
	\begin{equation*}
	\bE[\langle \nabla_{k+1}, -G_k+\nabla_k \rangle \mid \mathcal{F}_k]\le \alpha L n\sigma^2.
	\end{equation*}
\end{lemma}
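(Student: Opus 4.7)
The plan is to exploit the zero-mean relation $\bE[\nabla_k-G_k\mid\mathcal{F}_k]=0$, which would immediately kill the inner product if $\nabla_{k+1}$ were $\mathcal{F}_k$-measurable. It is not, because $\mx_{k+1}$ depends on $\my_k$, which in turn depends on $G_k=G(\mx_k,\boldsymbol{\xi}_k)$. I will therefore introduce an $\mathcal{F}_k$-measurable surrogate $\tilde{\mx}_{k+1}$ that differs from $\mx_{k+1}$ only through the noise $G_k-\nabla_k$, and control the resulting discrepancy with the Lipschitz gradient.

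Concretely, since $\my_k=\mW\my_{k-1}+G_k-G_{k-1}$ for $k\ge1$ (and $\my_0=G_0$ at initialization), I define $\tilde{\my}_k$ to be the result of replacing $G_k$ by $\nabla_k$ in this update and set $\tilde{\mx}_{k+1}:=\mW(\mx_k-\alpha\tilde{\my}_k)$. Because $\mx_k$, $\my_{k-1}$, $\nabla_k$, and $G_{k-1}$ are all $\mathcal{F}_k$-measurable, so is $\tilde{\mx}_{k+1}$, and by construction $\mx_{k+1}-\tilde{\mx}_{k+1}=-\alpha\mW(G_k-\nabla_k)$. Since $\mW$ is nonnegative and doubly stochastic, $\|\mW M\|\le \|M\|$ for any conformable matrix $M$ (Frobenius norm), hence $\|\mx_{k+1}-\tilde{\mx}_{k+1}\|\le\alpha\|G_k-\nabla_k\|$.

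Next I would decompose
\begin{equation*}
\langle \nabla_{k+1},\nabla_k-G_k\rangle = \langle \nabla F(\tilde{\mx}_{k+1}),\nabla_k-G_k\rangle + \langle \nabla_{k+1}-\nabla F(\tilde{\mx}_{k+1}),\nabla_k-G_k\rangle,
\end{equation*}
and take conditional expectations. The first term vanishes since $\nabla F(\tilde{\mx}_{k+1})$ is $\mathcal{F}_k$-measurable and $\bE[G_k\mid\mathcal{F}_k]=\nabla_k$. For the second, Cauchy--Schwarz in the Frobenius inner product combined with the row-wise $L$-Lipschitzness of $\nabla F$ (Assumption~\ref{asp: strconvexity}) gives $\|\nabla_{k+1}-\nabla F(\tilde{\mx}_{k+1})\|\le L\|\mx_{k+1}-\tilde{\mx}_{k+1}\|\le \alpha L\|G_k-\nabla_k\|$. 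Plugging in and using the variance bound $\bE[\|G_k-\nabla_k\|^2\mid\mathcal{F}_k]=\sum_i\bE[\|g_i(x_{i,k},\xi_{i,k})-\nabla f_i(x_{i,k})\|^2\mid\mathcal{F}_k]\le n\sigma^2$ from Assumption~\ref{asp: gradient samples} yields the claimed bound $\alpha L n\sigma^2$.

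The only conceptually nontrivial ingredient is the construction of the $\mathcal{F}_k$-measurable surrogate $\tilde{\mx}_{k+1}$; once that is in hand the argument reduces to Cauchy--Schwarz, Lipschitz continuity, and the per-agent variance hypothesis, all of which tensorize cleanly across agents.
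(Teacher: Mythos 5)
Your proof is correct, and it follows the same underlying strategy as the paper's --- replace the offending stochastic quantity inside $\nabla_{k+1}$ by a surrogate whose conditional cross-term with the noise vanishes, then charge the replacement error to the $L$-Lipschitz gradient and Cauchy--Schwarz --- but the execution differs in a way worth noting. The paper argues agent by agent: in the argument of $\nabla f_i(x_{i,k+1})$ it swaps only the diagonal term $\alpha w_{ii}\,g_i(x_{i,k},\xi_{i,k})$ for $\alpha w_{ii}\,\nabla f_i(x_{i,k})$, leaves the off-diagonal stochastic gradients $g_j(x_{j,k},\xi_{j,k})$, $j\neq i$, in place, and kills the cross-term using the independence of $\xi_{i,k}$ from $\{\xi_{j,k}\}_{j\neq i}$; the perturbation is then $\alpha w_{ii}L\|g_i-\nabla f_i\|\le \alpha L\|g_i-\nabla f_i\|$, giving $\alpha L\sigma^2$ per agent. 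You instead replace all of $G_k$ by $\nabla_k$ at the matrix level, so your surrogate $\tilde{\mx}_{k+1}$ is genuinely $\mathcal{F}_k$-measurable and the cross-term dies from conditional unbiasedness alone, with no appeal to independence across agents; the price is the global contraction $\|\mW(G_k-\nabla_k)\|\le\|G_k-\nabla_k\|$ (valid since $\|\mW\|_2\le\sqrt{\|\mW\|_1\|\mW\|_\infty}=1$ for a nonnegative doubly stochastic matrix) in place of $w_{ii}\le 1$. Both routes land on the same constant $\alpha Ln\sigma^2$; yours is slightly cleaner and needs marginally less of Assumption~\ref{asp: gradient samples} for this particular step, while the paper's per-agent bookkeeping is what lets it reuse the same computation verbatim in the gossip setting (Lemma~\ref{lem: gossip 3 bablas}).
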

\begin{proof}
From (\ref{eq: x_i,k}),
\begin{align*}
	\nabla f_i(x_{i,k+1})= & \nabla f_i\left(\sum_{j=1}^n w_{ij}(x_{j,k}-\alpha y_{j,k})\right)\\
	= & \nabla f_i\left(\sum_{j=1}^n w_{ij}x_{j,k}-\alpha\sum_{j=1}^n w_{ij}\left[\sum_{l=1}^{n}w_{jl}y_{l,k-1}+g_j(x_{j,k},\xi_{j,k})-g_j(x_{j,k-1},\xi_{j,k-1})\right]\right)\\
	= & \nabla f_i\left(\sum_{j=1}^n w_{ij}x_{j,k}-\alpha\sum_{j=1}^n w_{ij}\sum_{l=1}^{n}w_{jl}y_{l,k-1}-\alpha\sum_{j=1}^n w_{ij}g_j(x_{j,k},\xi_{j,k})+\alpha\sum_{j=1}^n w_{ij}g_j(x_{j,k-1},\xi_{j,k-1})\right).
	\end{align*}
	In light of Assumption \ref{asp: strconvexity},	
	\begin{multline}
	\left\|\nabla f_i(x_{i,k+1})-\nabla f_i\left(\sum_{j=1}^n w_{ij}x_{j,k}-\alpha\sum_{j=1}^n w_{ij}\sum_{l=1}^{n}w_{jl}y_{l,k-1}-\alpha\sum_{j\neq i}^n w_{ij}g_j(x_{j,k},\xi_{j,k})-\alpha w_{ii}\nabla f_i(x_{i,k})\right.\right.\\
	\left.\left.+\alpha\sum_{j=1}^n w_{ij}g_j(x_{j,k-1},\xi_{j,k-1})\right)\right\|
	\le \alpha L\|g_i(x_{i,k},\xi_{i,k})-\nabla f_i(x_{i,k})\|.
	\end{multline}
	Then,
	\begin{multline}
	\bE[\langle \nabla f_i(x_{i,k+1}), -g_i(x_{i,k},\xi_{i,k})+\nabla f_i(x_{i,k})\rangle \mid \mathcal{F}_k]
	\le \alpha L \bE[\|g_i(x_{i,k},\xi_{i,k})-\nabla f_i(x_{i,k})\|^2\mid \mathcal{F}_k]\le \alpha L \sigma^2.
	\end{multline}
	The desired result then follows.
\end{proof}
\begin{lemma}
	\label{lem: Vy_k-oy_k, 4 nablas}
	\begin{equation}
	\bE[\langle W\my_k-\mathbf{1}\oy_k, -G_k+\nabla_k \rangle\mid \mathcal{F}_k]\le \sigma^2.
	\end{equation}
\end{lemma}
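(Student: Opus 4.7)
The plan is to exploit the recursion $\my_k = \mW\my_{k-1} + G_k - G_{k-1}$ to isolate the only part of $\my_k$ that is not $\mathcal{F}_k$-measurable. A trivial induction (base case $\my_0 - G_0 = \mathbf{0}$; inductive step $\my_k - G_k = \mW\my_{k-1} - G_{k-1}$) shows that $\my_k - G_k$ is $\mathcal{F}_k$-measurable. Combined with the identity $\mathbf{1}\oy_k = \frac{1}{n}\mathbf{1}\mathbf{1}^{\T} G_k$ already established in the paper, this yields the decomposition
\begin{equation*}
\mW\my_k - \mathbf{1}\oy_k \;=\; \mW(\my_k - G_k) + \mM\, G_k, \qquad \mM := \mW - \tfrac{1}{n}\mathbf{1}\mathbf{1}^{\T},
\end{equation*}
in which the first summand is $\mathcal{F}_k$-measurable and the second carries all of the dependence on $\boldsymbol{\xi}_k$.

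Plugging this into the inner product, the $\mW(\my_k - G_k)$ contribution drops out since $\bE[-G_k + \nabla_k \mid \mathcal{F}_k] = 0$ by Assumption \ref{asp: gradient samples}. Setting $S_k := G_k - \nabla_k$, expanding $G_k = \nabla_k + S_k$ in the remaining term, and using that $\mM\nabla_k$ is $\mathcal{F}_k$-measurable to kill the cross term, I am left with
\begin{equation*}
\bE[\langle \mW\my_k - \mathbf{1}\oy_k,\, -G_k + \nabla_k \rangle \mid \mathcal{F}_k] \;=\; -\,\bE[\langle \mM S_k,\, S_k\rangle \mid \mathcal{F}_k].
\end{equation*}

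To finish, I expand $\langle \mM S_k, S_k\rangle = \sum_{i,j,s} M_{ij}(S_k)_{js}(S_k)_{is}$ and note that the cross-agent independence of the $\{\xi_i\}$ in Assumption \ref{asp: gradient samples}, together with $\bE[(S_k)_{is}\mid\mathcal{F}_k]=0$, forces $\bE[(S_k)_{is}(S_k)_{js}\mid\mathcal{F}_k]=0$ whenever $i\neq j$. Only the diagonal entries of $\mM$ survive, so $\bE[\langle \mM S_k, S_k\rangle\mid\mathcal{F}_k] = \sum_i (w_{ii} - 1/n)\, v_i$ with $v_i := \bE[\|g_i(x_{i,k},\xi_{i,k})-\nabla f_i(x_{i,k})\|^2\mid\mathcal{F}_k] \le \sigma^2$. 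Since $w_{ii}v_i \ge 0$ for every $i$, the negated sum satisfies $-\sum_i (w_{ii}-1/n)v_i \le \frac{1}{n}\sum_i v_i \le \sigma^2$, which is the desired bound. The main obstacle here is the measurability bookkeeping --- $\my_k$ itself is not $\mathcal{F}_k$-measurable and cannot be pulled out of the conditional expectation --- but once the decomposition above isolates the entire $\boldsymbol{\xi}_k$-dependence into the factor $G_k$ multiplying $\mM$, the cross-agent independence baked into Assumption \ref{asp: gradient samples} provides exactly the cancellation needed to collapse the quadratic form to its diagonal contribution and deliver the $\sigma^2$ bound.
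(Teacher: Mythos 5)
Your proof is correct and takes essentially the same route as the paper's: you isolate the $\boldsymbol{\xi}_k$-dependence of $\my_k$ through the recursion (so that $\my_k-G_k$ is $\mathcal{F}_k$-measurable), use unbiasedness and cross-agent independence to eliminate all but the diagonal terms, and bound $-\sum_i(w_{ii}-\tfrac1n)v_i\le\tfrac1n\sum_i v_i\le\sigma^2$. This is exactly the paper's componentwise computation --- $w_{ii}\bE[\langle g_i,\nabla f_i-g_i\rangle\mid\mathcal{F}_k]\le 0$ dropped, plus $-\tfrac1n\bE[\langle g_i,\nabla f_i-g_i\rangle\mid\mathcal{F}_k]\le\sigma^2/n$ summed over $i$ --- repackaged in matrix form.
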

\begin{proof}
	By (\ref{eq: x_i,k}), we have
	\begin{equation*}
	\bE[\langle W\my_k-\mathbf{1}\oy_k, -G_k+\nabla_k \rangle\mid \mathcal{F}_k]\\
	=\sum_{i=1}^n\bE\left[\Big\langle \sum_{j=1}^n w_{ij}y_{j,k}-\oy_k, \nabla f_i(x_{i,k})-g_i(x_{i,k},\xi_{i,k})\Big\rangle\Big\vert \mathcal{F}_k\right].
	\end{equation*}
	On one hand,
	\begin{multline*}
	\bE[\langle y_{j,k},\nabla f_i(x_{i,k})-g_i(x_{i,k},\xi_{i,k})\rangle\mid \mathcal{F}_k]\\
	=\bE\left[\Big\langle \sum_{n=1}^{n}w_{jn}y_{n,k-1}+g_j(x_{j,k},\xi_{j,k})-g_j(x_{j,k-1},\xi_{j,k-1}),\nabla f_i(x_{i,k})-g_i(x_{i,k},\xi_{i,k})\Big\rangle\Big\vert \mathcal{F}_k\right]\\
	=\bE\left[\left\langle  g_j(x_{j,k},\xi_{j,k}),\nabla f_i(x_{i,k})-g_i(x_{i,k},\xi_{i,k})\right\rangle\big\vert \mathcal{F}_k\right],
	\end{multline*}
	which gives
	\begin{equation*}
	\bE\left[\Big\langle \sum_{j=1}^n w_{ij}y_{j,k},\nabla f_i(x_{i,k})-g_i(x_{i,k},\xi_{i,k})\Big\rangle\Big\vert \mathcal{F}_k\right]
	=\bE[\langle w_{ii}g_i(x_{i,k},\xi_{i,k}),\nabla f_i(x_{i,k})-g_i(x_{i,k},\xi_{i,k})\rangle\mid \mathcal{F}_k]\le 0.
	\end{equation*}
	On the other hand,
	\begin{multline*}
	\bE[\langle \oy_k, \nabla f_i(x_{i,k})-g_i(x_{i,k},\xi_{i,k})\rangle\mid \mathcal{F}_k]
	=\bE\left[\Big\langle \frac{1}{n}\sum_{j=1}^{n}g_j(x_{j,k},\xi_{j,k}), \nabla f_i(x_{i,k})-g_i(x_{i,k},\xi_{i,k})\Big\rangle\Big\vert \mathcal{F}_k\right]\\
	=\bE\left[\Big\langle \frac{1}{n}g_i(x_{i,k},\xi_{i,k}), \nabla f_i(x_{i,k})-g_i(x_{i,k},\xi_{i,k})\Big\rangle\Big\vert \mathcal{F}_k\right].
	\end{multline*}
	We have
	\begin{equation}
	\bE[\langle W\my_k-\mathbf{1}\oy_k, -G_k+\nabla_k \rangle\mid \mathcal{F}_k]
	\le -\frac{1}{n}\sum_{i=1}^n\bE[\langle g_i(x_{i,k},\xi_{i,k}), \nabla f_i(x_{i,k})-g_i(x_{i,k},\xi_{i,k})\rangle\mid \mathcal{F}_k]
	\le \sigma^2.
	\end{equation}
\end{proof}
By (\ref{my_k+1-oy_k+1 pre}), Lemma \ref{lem: 3 nablas} and Lemma \ref{lem: Vy_k-oy_k, 4 nablas}, we obtain
\begin{multline}
\label{my_k+1-oy_k+1}
\bE[\|\my_{k+1}-\mathbf{1}\oy_{k+1}\|^2\mid \mathcal{F}_k]\le \rho_w^2\bE[\|\my_{k}-\mathbf{1}\oy_{k}\|^2\mid \mathcal{F}_k]
+\bE[\|\nabla_{k+1}-\nabla_k \|^2\mid\mathcal{F}_k]\\
+2\bE[\langle W\my_k-\mathbf{1}\oy_k, \nabla_{k+1}-\nabla_k \rangle\mid \mathcal{F}_k]+2(n+\alpha Ln+1)\sigma^2.
\end{multline}

Now we bound $\|\nabla_{k+1}-\nabla_k \|^2$ and $\langle W\my_k-\mathbf{1}\oy_k, \nabla_{k+1}-\nabla_k \rangle$.
First, by Assumption \ref{asp: strconvexity} and Lemma \ref{lem: spectral norm},
\begin{multline*}
\|\nabla_{k+1}-\nabla_k \|^2\le L^2\|\mx_{k+1}-\mx_k\|^2=L^2\|W\mx_k-\mx_k-\alpha W\my_k\|^2=L^2\|(\mW-\mI)(\mx_k-\mathbf{1}\ox_k)-\alpha W\my_k\|^2\\
= \|\mW-\mI\|^2L^2\|\mx_k-\mathbf{1}\ox_k\|^2-2\alpha L^2\langle (\mW-\mI)(\mx_k-\mathbf{1}\ox_k), W\my_k\rangle+\alpha^2L^2\|W\my_k\|^2\\
=\|\mW-\mI\|^2L^2\|\mx_k-\mathbf{1}\ox_k\|^2-2\alpha L^2\langle (\mW-\mI)(\mx_k-\mathbf{1}\ox_k), W\my_k-\mathbf{1}\oy_k\rangle+\alpha^2L^2\|W\my_k-\mathbf{1}\oy_k\|^2+\alpha^2 nL^2\|\oy_k\|^2\\
\le \|\mW-\mI\|^2L^2\|\mx_k-\mathbf{1}\ox_k\|^2+2\alpha \|\mW-\mI\| L^2\rho_w\|\mx_k-\mathbf{1}\ox_k\|\|\my_k-\mathbf{1}\oy_k\|
+\alpha^2 L^2\rho_w^2\|\my_k-\mathbf{1}\oy_k\|^2+\alpha^2 nL^2\|\oy_k\|^2\\
\le 2\|\mW-\mI\|^2L^2\|\mx_k-\mathbf{1}\ox_k\|^2+2\alpha^2 L^2\rho_w^2\|\my_k-\mathbf{1}\oy_k\|^2+\alpha^2 nL^2\|\oy_k\|^2.
\end{multline*}
Second,
\begin{multline*}
\langle W\my_k-\mathbf{1}\oy_k, \nabla_{k+1}-\nabla_k \rangle\le L\rho_w\|\my_k-\mathbf{1}\oy_k\| \|(\mW-\mI)(\mx_k-\mathbf{1}\ox_k)-\alpha W\my_k\|\\
\le \|\mW-\mI\|L\rho_w\|\my_k-\mathbf{1}\oy_k\|\|\mx_k-\mathbf{1}\ox_k\|+\alpha L\rho_w\|\my_k-\mathbf{1}\oy_k\|\|W\my_k-\mathbf{1}\oy_k+\mathbf{1}\oy_k\|\\
\le \|\mW-\mI\|L\rho_w\|\my_k-\mathbf{1}\oy_k\|\|\mx_k-\mathbf{1}\ox_k\|+\alpha L\rho_w^2\|\my_k-\mathbf{1}\oy_k\|^2+\alpha \sqrt{n}L\rho_w\|\my_k-\mathbf{1}\oy_k\|\|\oy_k\|.
\end{multline*}
Notice that
\begin{equation*}
\|\oy_k\|\le \|\oy_k-h(\mx_k)\|+\|h(\mx_k)-\nabla f(\ox_k)\|+\|\nabla f(\ox_k)\|\le \|\oy_k-h(\mx_k)\|+\frac{L}{\sqrt{n}}\|\mx_k-\mathbf{1}\ox_k\|+L\|\ox_k-x^*\|.
\end{equation*}
We have 
\begin{multline*}
\sqrt{n}L\rho_w\|\my_k-\mathbf{1}\oy_k\|\|\oy_k\|\le \sqrt{n}L\rho_w\|\my_k-\mathbf{1}\oy_k\|\left(\|\oy_k-h(\mx_k)\|+\frac{L}{\sqrt{n}}\|\mx_k-\mathbf{1}\ox_k\|+L\|\ox_k-x^*\|\right)\\
\le L\rho_w^2\|\my_k-\mathbf{1}\oy_k\|^2+nL\|\oy_k-h(\mx_k)\|^2+ L^3\|\mx_k-\mathbf{1}\ox_k\|^2+\frac{1}{2}n L^3\|\ox_k-x^*\|^2,
\end{multline*}
and
\begin{equation*}
\|\oy_k\|^2\le 3\|\oy_k-h(\mx_k)\|^2+\frac{3L^2}{n}\|\mx_k-\mathbf{1}\ox_k\|^2+3L^2\|\ox_k-x^*\|^2.
\end{equation*}
By (\ref{my_k+1-oy_k+1}) and the above relations,
\begin{multline}
\bE[\|\my_{k+1}-\mathbf{1}\oy_{k+1}\|^2\mid \mathcal{F}_k]
\le \rho_w^2\bE[\|\my_{k}-\mathbf{1}\oy_{k}\|^2\mid \mathcal{F}_k]+2\|\mW-\mI\|^2L^2\|\mx_k-\mathbf{1}\ox_k\|^2
+2\alpha^2 L^2\rho_w^2\bE[\|\my_k-\mathbf{1}\oy_k\|^2\mid\mathcal{F}_k]\\+\alpha^2 nL^2\left(3\bE[\|\oy_k-h(\mx_k)\|^2\mid\mathcal{F}_k]+\frac{3L^2}{n}\|\mx_k-\mathbf{1}\ox_k\|^2+3L^2\|\ox_k-x^*\|^2\right)\\
+2\left(\|\mW-\mI\|L\rho_w\|\my_k-\mathbf{1}\oy_k\|\|\mx_k-\mathbf{1}\ox_k\|+\alpha L\rho_w^2\|\my_k-\mathbf{1}\oy_k\|^2\right)\\
+2\left(\alpha L\rho_w^2\bE[\|\my_k-\mathbf{1}\oy_k\|^2\mid\mathcal{F}_k]+\alpha nL\bE[\|\oy_k-h(\mx_k)\|^2\mid\mathcal{F}_k]+\alpha L^3\|\mx_k-\mathbf{1}\ox_k\|^2+\frac{1}{2}\alpha n L^3\|\ox_k-x^*\|^2\right)\\
+2(n+\alpha Ln+1)\sigma^2\\
\le \left(\rho_w^2+4\alpha L\rho_w^2+2\alpha^2 L^2\rho_w^2\right)\bE[\|\my_{k}-\mathbf{1}\oy_{k}\|^2\mid \mathcal{F}_k]
+\left(\beta\rho_w^2\bE[\|\my_k-\mathbf{1}\oy_k\|^2\mid\mathcal{F}_k]+\frac{1}{\beta}\|\mW-\mI\|^2 L^2\|\mx_k-\mathbf{1}\ox_k\|^2\right)\\
+\left(2\|\mW-\mI\|^2L^2+2\alpha L^3+3\alpha^2 L^4\right)\|\mx_k-\mathbf{1}\ox_k\|^2+\left(3\alpha^2 nL^4+\alpha nL^3\right)\|\ox_k-x^*\|^2\\
+\left[3\alpha^2L^2+2\alpha L+2(n+\alpha Ln+1)\right]\sigma^2\\
= \left(1+4\alpha L+2\alpha^2 L^2+\beta\right)\rho_w^2\bE[\|\my_{k}-\mathbf{1}\oy_{k}\|^2\mid \mathcal{F}_k]
+\left(\frac{1}{\beta}\|\mW-\mI\|^2 L^2+2\|\mW-\mI\|^2 L^2+3\alpha L^3\right)\|\mx_k-\mathbf{1}\ox_k\|^2\\
+2\alpha nL^3\|\ox_k-x^*\|^2+M_{\sigma}
\end{multline}
for any $\beta>0$.

\subsection{Proof of Lemma \ref{lem: rho_M}}
\label{subsec: proof lemma rho_M}
The characteristic function of $\mS$ is given by
\begin{multline}
g(\lambda):=\text{det}(\lambda \mI-\mS)=(\lambda-s_{11})(\lambda-s_{22})(\lambda-s_{33})
-a_{23}a_{32}(\lambda-s_{11})-a_{13}a_{31}(\lambda-s_{22})\\
-a_{12}a_{21}(\lambda-s_{33})-a_{12}a_{23}a_{31}-a_{13}a_{32}a_{21}.
\end{multline}
Necessity is trivial since $\text{det}(\lambda^* \mI-\mS)\le 0$ implies $g(\lambda)=0$ for some $\lambda\ge \lambda^*$. We now show $\text{det}(\lambda^* \mI-\mS)>0$ is also a sufficient condition.
Given that $g(\lambda^*)=\text{det}(\lambda^* \mI-\mS)>0$,
\begin{equation*}
(\lambda^*-s_{11})(\lambda^*-s_{22})(\lambda^*-s_{33})> a_{23}a_{32}(\lambda^*-s_{11})+a_{13}a_{31}(\lambda^*-s_{22})+a_{12}a_{21}(\lambda^*-s_{33}).
\end{equation*}
It follows that
\begin{equation}
\begin{array}{ccc}
\gamma_1(\lambda^*-s_{22})(\lambda^*-s_{33}) & > & a_{23}a_{32}\\
\gamma_2(\lambda^*-s_{11})(\lambda^*-s_{33}) & > & a_{13}a_{31}\\
\gamma_3(\lambda^*-s_{11})(\lambda^*-s_{22}) & > & a_{12}a_{21}
\end{array}
\end{equation}
for some $\gamma_1,\gamma_2,\gamma_3>0$ with $\gamma_1+\gamma_2+\gamma_3\le 1$.
Consider
\begin{equation*}
g'(\lambda)=(\lambda-s_{22})(\lambda-s_{33})+(\lambda-s_{11})(\lambda-s_{33})+(\lambda-s_{11})(\lambda-s_{22})-a_{23}a_{32}-a_{13}a_{31}-a_{12}a_{21}.
\end{equation*}
We have $g'(\lambda)>0$ for $\lambda\in(-\infty,-\lambda^*]\cup[\lambda^*,+\infty)$. Notice that
\begin{equation*}
g(-\lambda^*)\le -(\lambda^*+s_{11})(\lambda^*+s_{22})(\lambda^*+s_{33})+a_{23}a_{32}(1+s_{11})+a_{13}a_{31}(\lambda^*+s_{22})+a_{12}a_{21}(\lambda^*+s_{33})< 0.
\end{equation*}
All the real roots of $g(\lambda)=0$ lie in the interval $(-\lambda^*,\lambda^*)$. By the Perron-Frobenius theorem, $\rho(\mS)\in\mathbb{R}$ is an eigenvalue of $\mS$. We conclude that $\rho(\mS)<\lambda^*$.

\subsection{Proof of Lemma \ref{lem: gossip three main inequalities}}
\label{appendix: lem gossip three main inequalities}
First we prove relation (\ref{gossip: first main inequality}).
In light of (\ref{eq:x-update gossip compact}), we have
\begin{equation}
\label{eq: x_k+1 gossip}
\bar{x}_{k+1}=\frac{1}{n}\left[(x_{i_k,k}+x_{j_k,k})-\alpha (y_{i_k,k}+y_{j_k,k})\right]+\frac{1}{n}\sum_{i\neq\{i_k,j_k\}}x_{i,k}
=\ox_k-\frac{\alpha}{n}(y_{i_k,k}+y_{j_k,k}).
\end{equation}
Then,
\begin{equation*}
\|\ox_{k+1}-x^*\|^2=\left\|\ox_k-\frac{\alpha}{n}(y_{i_k,k}+y_{j_k,k})-x^*\right\|^2=\|\ox_k-x^*\|^2-\frac{2\alpha}{n}\langle \ox_k-x^*, y_{i_k,k}+y_{j_k,k}\rangle+\frac{\alpha^2}{n^2}\|y_{i_k,k}+y_{j_k,k}\|^2.
\end{equation*}
Taking conditional expectations of $(y_{i_k,k}+y_{j_k,k})$ and $\|y_{i_k,k}+y_{j_k,k}\|^2$ w.r.t. the random selections of $i_k$ and $j_k$, we get
\begin{equation}
\bE[y_{i_k,k}+y_{j_k,k}\mid \mathcal{F}_{k+1}]=\frac{1}{n}\sum_{i=1}^n\left( y_{i,k}+\sum_{j=1}^n\pi_{ij}y_{j,k}\right)=2\oy_k,
\end{equation}
and
\begin{multline}
\bE[\|y_{i_k,k}+y_{j_k,k}\|^2\mid\mathcal{F}_{k+1}]=\frac{1}{n}\sum_{i=1}^n\sum_{j=1}^n\pi_{ij}\|y_{i,k}+y_{j,k}\|^2=\frac{2}{n}\sum_{i=1}^n\|y_{i,k}\|^2+\frac{2}{n}\sum_{i=1}^n\sum_{j=1}^n\pi_{ij}\langle y_{i,k},y_{j,k}\rangle\\
=\frac{2}{n}\|\my_k\|^2+\frac{2}{n}\my_k^{\T}\mPi\my_k=\frac{2}{n}\|\my_k\|^2+\frac{2}{n}(\my_k-\mathbf{1}\oy_k)^{\T}\mPi(\my_k-\mathbf{1}\oy_k)+2\|\oy_k\|^2
\le \frac{2}{n}(1+\rho_{\pi})\|\my_k-\mathbf{1}\oy_k\|^2+4\|\oy_k\|^2\\
	\le \frac{4}{n}\|\my_k-\mathbf{1}\oy_k\|^2+4\|\oy_k\|^2,
\end{multline}
where $\rho_{\pi}<1$ denotes the spectral norm of $\mPi-\frac{1}{n}\mathbf{1}\mathbf{1}^{\T}$.
It follows that
\begin{equation}
\label{gossip: first main inequality_pre}
\bE[\|\ox_{k+1}-x^*\|^2\mid \mathcal{F}_{k+1}]\le \|\ox_k-x^*\|^2-\frac{4\alpha}{n}\langle \ox_k-x^*, \oy_k\rangle+\frac{4\alpha^2}{n^2}\|\oy_k\|^2
+\frac{4\alpha^2}{n^3}\|\my_k-\mathbf{1}\oy_k\|^2.
\end{equation}
Noticing that	$\bE[\oy_k]=\bE[h(\mx_k)]$, from Lemma \ref{lem: oy_k-h_k} we have
\begin{equation}
\label{E[oy_k^2]}
\bE[\|\oy_k\|^2]=\bE[\|\oy_k-h(\mx_k)\|^2+2\langle \oy_k-h(\mx_k),h(\mx_k)\rangle+\|h(\mx_k)\|^2\mid\mathcal{F}_k]\le \frac{\sigma^2}{n}+\bE[\|h(\mx_k)\|^2].
\end{equation}
Then from (\ref{gossip: first main inequality_pre}) we obtain
\begin{align*}
& \bE[\|\ox_{k+1}-x^*\|^2]\\
\le & \bE[\|\ox_k-x^*\|^2]-\frac{4\alpha}{n}\bE[\langle \ox_k-x^*, h(\mx_k)\rangle]+\frac{4\alpha^2}{n^2}\bE[\|h(\mx_k)\|^2]+\frac{4\alpha^2\sigma^2}{n^3}+\frac{4\alpha^2}{n^3}\bE[\|\my_k-\mathbf{1}\oy_k\|^2]\\
= &\bE[\|\ox_k-x^*\|^2]-\frac{4\alpha}{n}\bE[\langle \ox_k-x^*, \nabla f(\ox_k)\rangle]-\frac{4\alpha}{n}\bE[\langle \ox_k-x^*, h(\mx_k)-\nabla f(\ox_k)\rangle]\\
& +\frac{4\alpha^2}{n^2}\bE[\|h(\mx_k)-\nabla f(\ox_k)\|^2]+\frac{4\alpha^2}{n^2}\bE[\|\nabla f(\ox_k)\|^2]+\frac{8\alpha^2}{n^2}\bE[\langle h(\mx_k)-\nabla f(\ox_k), \nabla f(\ox_k)\rangle]\\
& +\frac{4\alpha^2}{n^3}\bE[\|\my_k-\mathbf{1}\oy_k\|^2]+\frac{4\alpha^2\sigma^2}{n^3}\\
= &\bE\left[\left\|\ox_k-\frac{2\alpha}{n}\nabla f(\ox_k)-x^*\right\|^2\right]-\frac{4\alpha}{n}\bE[\langle \ox_k-\frac{2\alpha}{n}\nabla f(\ox_k)-x^*,h(\mx_k)-\nabla f(\ox_k)\rangle]\\
& +\frac{4\alpha^2}{n^2}\bE[\|h(\mx_k)-\nabla f(\ox_k)\|^2]+\frac{4\alpha^2}{n^3}\bE[\|\my_k-\mathbf{1}\oy_k\|^2\mid\mathcal{F}_{k}^{-}]+\frac{4\alpha^2\sigma^2}{n^3}.
\end{align*}
Since $\alpha<n/(\mu+L)$, we know from Lemma \ref{lem: strong_convexity} that
\begin{equation*}
\left\|\ox_k-\frac{2\alpha}{n}\nabla f(\ox_k)-x^*\right\|^2\le \left(1-\frac{2\alpha\mu}{n}\right)^2\|\ox_k-x^*\|^2,
\end{equation*}
and
\begin{equation*}
\|h(\mx_k)-\nabla f(\ox_k)\|\le \frac{L}{\sqrt{n}}\|\mx_k-\mathbf{1}\ox_k\|.
\end{equation*}
It follows that
\begin{align*}
& \bE[\|\ox_{k+1}-x^*\|^2]\\
\le & \left(1-\frac{2\alpha\mu}{n}\right)^2\bE[\|\ox_k-x^*\|^2]+\frac{4\alpha }{n}\left(1-\frac{2\alpha\mu}{n}\right)\frac{L}{\sqrt{n}}\bE[\|\ox_k-x^*\|\|\mx_k-\mathbf{1}\ox_k\|]+\frac{4\alpha^2L^2}{n^3}\bE[\|\mx_k-\mathbf{1}\ox_k\|^2]\\
&+\frac{4\alpha^2}{n^3}\bE[\|\my_k-\mathbf{1}\oy_k\|^2]+\frac{4\alpha^2\sigma^2}{n^3}\\
\le &
\left(1-\frac{2\alpha\mu}{n}\right)^2\bE[\|\ox_k-x^*\|^2]+\frac{2\alpha}{n}\left( \left(1-\frac{2\alpha\mu}{n}\right)^2\mu\bE[\|\ox_k-x^*\|^2]+\frac{L^2}{\mu n}\bE[\|\mx_k-\mathbf{1}\ox_k\|^2]\right)\\
& +\frac{4\alpha^2L^2}{n^3}\bE[\|\mx_k-\mathbf{1}\ox_k\|^2]+\frac{4\alpha^2}{n^3}\bE[\|\my_k-\mathbf{1}\oy_k\|^2]+\frac{4\alpha^2\sigma^2}{n^3}\\
\le &
\left(1-\frac{2\alpha\mu}{n}\right)\bE[\|\ox_k-x^*\|^2]+\frac{2\alpha L^2}{\mu n^2}\left(1+\frac{2\alpha\mu}{n}\right)\bE[\|\mx_k-\mathbf{1}\ox_k\|^2]
+\frac{4\alpha^2}{n^3}\bE[\|\my_k-\mathbf{1}\oy_k\|^2]+\frac{4\alpha^2\sigma^2}{n^3}.
\end{align*}

To bound the consensus error $\bE[\|\mx_{k+1}-\mathbf{1}\ox_{k+1}\|^2]$, note that from (\ref{eq:x-update gossip compact}) and (\ref{eq: x_k+1 gossip}) we have
\begin{multline}
\label{consensus_error_pre_gossip}
\|\mx_{k+1}-\mathbf{1}\ox_{k+1}\|^2=\left\|\mathbf{W}_k \mx_k-\alpha \mathbf{D}_k \my_k-\mathbf{1}\ox_k+\frac{\alpha}{n}\mathbf{1}(y_{i_k,k}+y_{j_k,k})\right\|^2\\
=\|\mathbf{W}_k \mx_k-\mathbf{1}\ox_k\|^2-2\alpha\langle \mathbf{W}_k \mx_k-\mathbf{1}\ox_k, \mathbf{D}_k \my_k-\frac{1}{n}\mathbf{1}(y_{i_k,k}+y_{j_k,k})\rangle+\alpha^2\left\|\mathbf{D}_k \my_k-\frac{1}{n}\mathbf{1}(y_{i_k,k}+y_{j_k,k})\right\|^2\\
=\trace\left[(\mx_k-\mathbf{1}\ox_k)^{\T} \mathbf{W}_k^{\T} \mathbf{W}_k(\mx_k-\mathbf{1}\ox_k)\right]-2\alpha\langle \mathbf{W}_k \mx_k-\mathbf{1}\ox_k, \mathbf{D}_k \my_k\rangle+\alpha^2\left\|\mathbf{D}_k \my_k-\frac{1}{n}\mathbf{1}(y_{i_k,k}+y_{j_k,k})\right\|^2.
\end{multline}
By Lemma \ref{lem: spectral norm_gossip}, the conditional expectation of $\trace\left[(\mx_k-\mathbf{1}\ox_k)^{\T}\mathbf{W}_k^{\T} \mathbf{W}_k(\mx_k-\mathbf{1}\ox_k)\right]$ can be bounded below,
\begin{equation}
\label{consensus_error_pre_gossip first inequality}
\bE[\trace[(\mx_k-\mathbf{1}\ox_k)^{\T} \mathbf{W}_k^{\T} \mathbf{W}_k(\mx_k-\mathbf{1}\ox_k)]\mid\mathcal{F}_{k+1}]=\trace\left[(\mx_k-\mathbf{1}\ox_k)^{\T}\bar{\mW}(\mx_k-\mathbf{1}\ox_k)\right]\le \rho_{\bar{w}}\|\mx_k-\mathbf{1}\ox_k\|^2.
\end{equation}
In view of the structure of $\mathbf{D}_k$, we rewrite $2\langle \mathbf{W}_k \mx_k-\mathbf{1}\ox_k, \mathbf{D}_k \my_k\rangle$ as follows:
\begin{equation*}
2\langle \mathbf{W}_k \mx_k-\mathbf{1}\ox_k, \mathbf{D}_k \my_k\rangle=2\langle \frac{1}{2}(x_{i_k,k}+x_{j_k,k})-\ox_k,y_{i_k,k}+y_{j_k,k}\rangle=\langle x_{i_k,k}+x_{j_k,k}-2\ox_k,y_{i_k,k}+y_{j_k,k}\rangle.
\end{equation*}
Note that
\begin{multline*}
\bE[\langle x_{i_k,k}-\ox_k, y_{i_k,k}\rangle\mid\mathcal{F}_{k+1}]=\frac{1}{n}\sum_{i=1}^n\langle x_{i,k}-\ox_k, y_{i,k}\rangle=\frac{1}{n}\sum_{i=1}^n\langle x_{i,k}-\ox_k, y_{i,k}-\oy_k\rangle
=\frac{1}{n}\langle \mx_k-\mathbf{1}\ox_k, \my_k-\mathbf{1}\oy_k\rangle,
\end{multline*}
\begin{multline*}
\bE[\langle x_{j_k,k}-\ox_k, y_{j_k,k}\rangle\mid\mathcal{F}_{k+1}]=\frac{1}{n}\sum_{i=1}^n\sum_{j=1}^n\pi_{ij}\langle x_{j,k}-\ox_k, y_{j,k}\rangle=\frac{1}{n}\sum_{j=1}^n\langle x_{j,k}-\ox_k, y_{j,k}\rangle
=\frac{1}{n}\langle \mx_k-\mathbf{1}\ox_k, \my_k-\mathbf{1}\oy_k\rangle,
\end{multline*}
\begin{multline*}
\bE[\langle x_{i_k,k}-\ox_k, y_{j_k,k}\rangle\mid\mathcal{F}_{k+1}]=\frac{1}{n}\sum_{i=1}^n\langle x_{i,k}-\ox_k,\sum_{j=1}^n \pi_{ij}y_{j,k}\rangle=\frac{1}{n}\sum_{i=1}^n\sum_{j=1}^n\langle x_{i,k}-\ox_k,\pi_{ij}(y_{j,k}-\oy_k)\rangle\\
=\frac{1}{n}\trace\left[(\mx_k-\mathbf{1}\ox_k)^{\T}\mPi (\my_k-\mathbf{1}\oy_k)\right],
\end{multline*}
and similarly,
\begin{equation*}
\bE[\langle x_{j_k,k}-\ox_k, y_{i_k,k}\rangle\mid\mathcal{F}_{k+1}]=\frac{1}{n}\trace\left[(\mx_k-\mathbf{1}\ox_k)^{\T}\mPi (\my_k-\mathbf{1}\oy_k)\right].
\end{equation*}
The following inequality holds:
\begin{multline}
\label{consensus_error_pre_gossip second inequality}
2\bE[\langle \mathbf{W}_k \mx_k-\mathbf{1}\ox_k, \mathbf{D}_k \my_k\rangle\mid\mathcal{F}_{k+1}]=\frac{2}{n}\langle\mx_k-\mathbf{1}\ox_k, \my_k-\mathbf{1}\oy_k\rangle+\frac{2}{n}\trace\left[(\mx_k-\mathbf{1}\ox_k)^{\T}\mPi (\my_k-\mathbf{1}\oy_k)\right]\\
\le \frac{4}{n}\|\mx_k-\mathbf{1}\ox_k\|\|\my_k-\mathbf{1}\oy_k\|.
\end{multline}
The last term in equation (\ref{consensus_error_pre_gossip}) can be bounded in the following way:
\begin{multline}
\label{consensus_error_pre_gossip third inequality}
\left\|\mathbf{D}_k \my_k-\frac{1}{n}\mathbf{1}(y_{i_k,k}+y_{j_k,k})\right\|^2=\|\mathbf{D}_k \my_k\|^2-\frac{2}{n}\langle \mathbf{D}_k \my_k, \mathbf{1}(y_{i_k,k}+y_{j_k,k})\rangle+\frac{1}{n}\|y_{i_k,k}+y_{j_k,k}\|^2\\
\le 2(\|y_{i_k,k}\|^2+\|y_{j_k,k}\|^2)-\frac{2}{n}\|y_{i_k,k}+y_{j_k,k}\|^2+\frac{1}{n}\|y_{i_k,k}+y_{j_k,k}\|^2\le 2(\|y_{i_k,k}\|^2+\|y_{j_k,k}\|^2).
\end{multline}
In view of inequalities (\ref{consensus_error_pre_gossip first inequality})-(\ref{consensus_error_pre_gossip third inequality}), from (\ref{consensus_error_pre_gossip}) we obtain
\begin{multline*}
\bE[\|\mx_{k+1}-\mathbf{1}\ox_{k+1}\|^2\mid \mathcal{F}_{k+1}]
\le \rho_{\bar{w}}\|\mx_k-\mathbf{1}\ox_k\|^2+\frac{4\alpha}{n}\|\mx_k-\mathbf{1}\ox_k\|\|\my_k-\mathbf{1}\oy_k\|+\frac{4\alpha^2}{n}\|\my_k\|^2\\
\le \rho_{\bar{w}}\|\mx_k-\mathbf{1}\ox_k\|^2+\frac{2\alpha}{n}\left(\beta_1\|\mx_k-\mathbf{1}\ox_k\|^2+\frac{1}{\beta_1}\|\my_k-\mathbf{1}\oy_k\|^2\right)+\frac{4\alpha^2}{n}\|\my_k\|^2\\
=\left(\rho_{\bar{w}}+\frac{2\alpha}{n}\beta_1\right)\|\mx_k-\mathbf{1}\ox_k\|^2+\frac{2\alpha}{n}\frac{1}{\beta_1}\|\my_k-\mathbf{1}\oy_k\|^2+\frac{4\alpha^2}{n}\left(\|\my_k-\mathbf{1}\oy_k\|^2+n\|\oy_k\|^2\right)\\
=\left(\rho_{\bar{w}}+\frac{2\alpha}{n}\beta_1\right)\|\mx_k-\mathbf{1}\ox_k\|^2+\frac{2\alpha}{n}\left(\frac{1}{\beta_1}+2\alpha\right)\|\my_k-\mathbf{1}\oy_k\|^2+4\alpha^2\|\oy_k\|^2,
\end{multline*}
where $\beta_1>0$ is arbitrary.
Notice that by relation (\ref{E[oy_k^2]}) and Lemma \ref{lem: strong_convexity},
\begin{multline}
\label{gossip oy_k}
\bE[\|\oy_k\|^2]\le \frac{\sigma^2}{n}+\bE[\|h(\mx_k)-\nabla f(\ox_k)+\nabla f(\ox_k)\|^2]
\le \frac{\sigma^2}{n}+\bE\left[\left(\frac{L}{\sqrt{n}}\|\mx_k-\mathbf{1}\ox_k\|+L\|\ox_k-x^*\|\right)^2\right]\\
\le \frac{\sigma^2}{n}+\frac{2L^2}{n}\bE[\|\mx_k-\mathbf{1}\ox_k\|^2]+2L^2\bE[\|\ox_k-x^*\|^2].
\end{multline}	
We obtain
\begin{multline*}
\bE[\|\mx_{k+1}-\mathbf{1}\ox_{k+1}\|^2]
\le\left(\rho_{\bar{w}}+\frac{2\alpha}{n}\beta_1+\frac{8\alpha^2 L^2}{n}\right)\bE[\|\mx_k-\mathbf{1}\ox_k\|^2]
+\frac{2\alpha}{n}\left(\frac{1}{\beta_1}+\alpha\right)\bE[\|\my_k-\mathbf{1}\oy_k\|^2]\\
+8\alpha^2 L^2\bE[\|\ox_k-x^*\|^2]+\frac{4\alpha^2 \sigma^2}{n},
\end{multline*}
which is exactly inequality (\ref{gossip: second main inequality}).

Finally we prove inequality (\ref{gossip: third main inequality}). From the update rule (\ref{eq:y-update gossip compact}),
\begin{multline}
\label{gossip: third main inequality_pre1}
\|\my_{k+1}-\mathbf{1}\oy_{k+1}\|^2=\|\mathbf{W}_k \my_k+\tilde{\mathbf{D}}_k G_{k+1}-\tilde{\mathbf{D}}_k G_k-\mathbf{1}\oy_k+\mathbf{1}\oy_k-\mathbf{1}\oy_{k+1}\|^2\\
=\|\mathbf{W}_k\my_k-\mathbf{1}\oy_k\|^2+\|\tilde{\mathbf{D}}_k G_{k+1}-\tilde{\mathbf{D}}_k G_k\|^2+n\|\oy_k-\oy_{k+1}\|^2+2\langle \mathbf{W}_k\my_k-\mathbf{1}\oy_k,\tilde{\mathbf{D}}_k G_{k+1}-\tilde{\mathbf{D}}_k G_k\rangle\\
+2\langle \mathbf{W}_k\my_k-\mathbf{1}\oy_k,\mathbf{1}\oy_k-\mathbf{1}\oy_{k+1}\rangle+2\langle \tilde{\mathbf{D}}_k G_{k+1}-\tilde{\mathbf{D}}_k G_k,\mathbf{1}\oy_k-\mathbf{1}\oy_{k+1}\rangle\\
=\trace\left[(\my_k-\mathbf{1}\oy_k)^{\T}\mathbf{W}_k^{\T}\mathbf{W}_k(\my_k-\mathbf{1}\oy_k)\right]+\|\tilde{\mathbf{D}}_k G_{k+1}-\tilde{\mathbf{D}}_k G_k\|^2+n\|\oy_k-\oy_{k+1}\|^2\\
+2\langle \mathbf{W}_k\my_k-\mathbf{1}\oy_k,\tilde{\mathbf{D}}_k G_{k+1}-\tilde{\mathbf{D}}_k G_k\rangle+2\langle \tilde{\mathbf{D}}_k G_{k+1}-\tilde{\mathbf{D}}_k G_k,\mathbf{1}\oy_k-\mathbf{1}\oy_{k+1}\rangle.
\end{multline}
Denote $g_{i,k}:=g_i(x_{i,k},\xi_{i,k})$ for short. Notice that
\begin{equation*}
2\langle \tilde{\mathbf{D}}_k G_{k+1}-\tilde{\mathbf{D}}_k G_k,\mathbf{1}\oy_k-\mathbf{1}\oy_{k+1}\rangle=2\langle g_{i_k,k+1}-g_{i_k,k}+\ik(g_{j_k,k+1}-g_{j_k,k}) ,\oy_k-\oy_{k+1}\rangle,
\end{equation*}
and
\begin{equation*}
\oy_k-\oy_{k+1}=\frac{1}{n}\left(y_{i_k,k}-y_{i_k,k+1}+\ik(y_{j_k,k}-y_{j_k,k+1})\right)=-\frac{1}{n}\left(g_{i_k,k+1}-g_{i_k,k}+\ik(g_{j_k,k+1}-g_{j_k,k})\right).
\end{equation*}
We have
\begin{equation*}
2\langle \tilde{\mathbf{D}}_k G_{k+1}-\tilde{\mathbf{D}}_k G_k,\mathbf{1}\oy_k-\mathbf{1}\oy_{k+1}\rangle=-2n\|\oy_k-\oy_{k+1}\|^2.
\end{equation*}
Hence relation (\ref{gossip: third main inequality_pre1}) leads to
\begin{multline}
\label{y_k+1-oy_k+1 pre}
\|\my_{k+1}-\mathbf{1}\oy_{k+1}\|^2
\le \trace\left[(\my_k-\mathbf{1}\oy_k)^{\T}\mathbf{W}_k^{\T}\mathbf{W}_k(\my_k-\mathbf{1}\oy_k)\right]+\|\tilde{\mathbf{D}}_k G_{k+1}-\tilde{\mathbf{D}}_k G_k\|^2\\
+2\langle \mathbf{W}_k\my_k-\mathbf{1}\oy_k,\tilde{\mathbf{D}}_k G_{k+1}-\tilde{\mathbf{D}}_k G_k\rangle.
\end{multline}
Now we analyze the three terms on the right-hand side of (\ref{y_k+1-oy_k+1 pre}), respectively.
First,
\begin{equation}
\label{my_k-1 oy_k WW bound}
\bE[\trace[(\my_k-\mathbf{1}\oy_k)^{\T}\mathbf{W}_k^{\T}\mathbf{W}_k(\my_k-\mathbf{1}\oy_k)]\mid \mathcal{F}_{k+1}]\le \rho_{\bar{w}}\|\my_k-\mathbf{1}\oy_k\|^2.
\end{equation}
Second,
\begin{equation}
\label{D_kG_k+1-D_kG_k_pre}
\|\tilde{\mathbf{D}}_k G_{k+1}-\tilde{\mathbf{D}}_k G_k\|^2=\|g_{i_k,k+1}-g_{i_k,k}\|^2+\ik\|g_{j_k,k+1}-g_{j_k,k}\|^2.
\end{equation}	
In light of Assumption \ref{asp: gradient samples} and Assumption \ref{asp: strconvexity}, we can bound $\bE[\|g_{i_k,k+1}-g_{i_k,k}\|^2]$ and $\bE[\ik\|g_{j_k,k+1}-g_{j_k,k}\|^2]$:
\begin{align}
\label{bound: g_{i_k,k+1}-g_{i_k,k}}
\bE[\|g_{i_k,k+1}-g_{i_k,k}\|^2] =&\bE[\|\nabla f_{i_k}(x_{i_k,k+1})-\nabla f_{i_k}(x_{i_k,k})\|^2]\notag\\
&+2\bE[\langle \nabla f_{i_k}(x_{i_k,k+1})-\nabla f_{i_k}(x_{i_k,k}),g_{i_k,k+1}-\nabla f_{i_k}(x_{i_k,k+1})-g_{i_k,k}+\nabla f_{i_k}(x_{i_k,k})\rangle]\notag\\
&+\bE[\|g_{i_k,k+1}-\nabla f_{i_k}(x_{i_k,k+1})-g_{i_k,k}+\nabla f_{i_k}(x_{i_k,k})\|^2]\notag\\
=&\bE[\|\nabla f_{i_k}(x_{i_k,k+1})-\nabla f_{i_k}(x_{i_k,k})\|^2]+2\bE[\langle \nabla f_{i_k}(x_{i_k,k+1}), -g_{i_k,k}+\nabla f_{i_k}(x_{i_k,k})\rangle ]\notag\\
&+\bE[\|g_{i_k,k+1}-\nabla f_{i_k}(x_{i_k,k+1})\|^2]+\bE[\|g_{i_k,k}-\nabla f_{i_k}(x_{i_k,k})\|^2]\notag\\
\le & L^2\bE[\|x_{i_k,k+1}-x_{i_k,k}\|^2]+2\bE[\langle \nabla f_{i_k}(x_{i_k,k+1}), -g_{i_k,k}+\nabla f_{i_k}(x_{i_k,k})\rangle ]+2\sigma^2.
\end{align}
Similarly,
\begin{multline}
\label{bound: g_{j_k,k+1}-g_{j_k,k}}
\bE[\ik\|g_{j_k,k+1}-g_{j_k,k}\|^2]
\le L^2\bE[\ik\|x_{j_k,k+1}-x_{j_k,k}\|^2]+2\bE[\ik\langle \nabla f_{j_k}(x_{j_k,k+1}), -g_{j_k,k}+\nabla f_{j_k}(x_{j_k,k})\rangle ]+2\sigma^2.
\end{multline}
To further bound $\bE[\|g_{i_k,k+1}-g_{i_k,k}\|^2+\ik\|g_{j_k,k+1}-g_{j_k,k}\|^2]$, we introduce the following lemma.
\begin{lemma}
	\label{lem: gossip 3 bablas}
	\begin{equation*}
	\bE[\langle \nabla f_{i_k}(x_{i_k,k+1}), -g_{i_k,k}+\nabla f_{i_k}(x_{i_k,k})\rangle]+\bE[\ik\langle \nabla f_{j_k}(x_{j_k,k+1}), -g_{j_k,k}+\nabla f_{j_k}(x_{j_k,k})\rangle]
	\le 2\alpha L\sigma^2.
	\end{equation*}
\end{lemma}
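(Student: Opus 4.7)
The plan is to adapt the surrogate-variable technique used in the proof of Lemma \ref{lem: 3 nablas} to the gossip setting, exploiting the structural fact that in GSGT only the active agents modify their state. The idea is to identify a ``noise-free'' proxy for $x_{i_k,k+1}$ whose inner product with $-g_{i_k,k}+\nabla f_{i_k}(x_{i_k,k})$ vanishes in expectation, and then control the approximation error via the Lipschitz continuity of $\nabla f_{i_k}$.

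Concretely, I would define
\begin{equation*}
\tilde y_{i_k,k} := y_{i_k,k} - g_{i_k,k} + \nabla f_{i_k}(x_{i_k,k}), \qquad \tilde x_{i_k,k+1} := \tfrac{1}{2}(x_{i_k,k}+x_{j_k,k}) - \alpha \tilde y_{i_k,k},
\end{equation*}
so that by (\ref{eq:x-update gossip}), $x_{i_k,k+1}-\tilde x_{i_k,k+1} = -\alpha\bigl(g_{i_k,k}-\nabla f_{i_k}(x_{i_k,k})\bigr)$, and Assumption \ref{asp: strconvexity} gives $\|\nabla f_{i_k}(x_{i_k,k+1}) - \nabla f_{i_k}(\tilde x_{i_k,k+1})\| \le \alpha L\,\|g_{i_k,k}-\nabla f_{i_k}(x_{i_k,k})\|$. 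The crux of the argument is to verify that $\tilde x_{i_k,k+1}$ is independent of $\xi_{i_k,k}$ given all other randomness. Letting $t<k$ be the last round in which agent $i_k$ was active (so $\xi_{i_k,k}=\xi_{i_k,t+1}$), the gossip rule ensures that $i_k$ neither transmits nor receives information in rounds $t+1,\dots,k$; hence $\xi_{i_k,k}$ enters the network state only through the single term $g_{i_k,k}$ stored inside $y_{i_k}$, which the surrogate $\tilde y_{i_k,k}$ precisely excises. In particular, $x_{i_k,k}=x_{i_k,t+1}$ was set before $\xi_{i_k,t+1}$ was drawn, and $x_{j_k,k}$ could only inherit $\xi_{i_k,k}$ through a direct or indirect contact with $i_k$ during that inactive window -- which does not occur. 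Assumption \ref{asp: gradient samples} therefore yields
\begin{equation*}
\bE\bigl[\langle \nabla f_{i_k}(\tilde x_{i_k,k+1}),\, -g_{i_k,k}+\nabla f_{i_k}(x_{i_k,k})\rangle\bigr] = 0.
\end{equation*}
Splitting $\nabla f_{i_k}(x_{i_k,k+1}) = \nabla f_{i_k}(\tilde x_{i_k,k+1}) + [\nabla f_{i_k}(x_{i_k,k+1})-\nabla f_{i_k}(\tilde x_{i_k,k+1})]$, applying Cauchy--Schwarz to the remainder, and invoking $\bE[\|g_{i_k,k}-\nabla f_{i_k}(x_{i_k,k})\|^2]\le\sigma^2$ delivers the bound $\alpha L\sigma^2$ for the first term in the lemma.

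An identical construction with the roles of $i_k$ and $j_k$ swapped (using $\tilde y_{j_k,k}:=y_{j_k,k}-g_{j_k,k}+\nabla f_{j_k}(x_{j_k,k})$) handles the $\ik$-term on the event $\{j_k\neq i_k\}$, yielding a second $\alpha L\sigma^2$ contribution; summing gives the desired $2\alpha L\sigma^2$. The main obstacle is the independence assertion for $\tilde x_{i_k,k+1}$: since the activations $(i_s,j_s)_{s\le k}$ are themselves random, one must carefully fix the conditioning (on the entire activation history together with every noise variable other than $\xi_{i_k,k}$) and then trace the flow of information to confirm that $\xi_{i_k,k}$ has not leaked into $x_{j_k,k}$. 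The gossip structure is exactly what makes this work -- it confines $\xi_{i_k,k}$ to $y_{i_k}$ alone throughout $i_k$'s period of inactivity, which is the single term that the surrogate removes.
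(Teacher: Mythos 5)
Your proposal is essentially the paper's intended argument: the paper omits this proof, stating only that it is "similar to that of Lemma \ref{lem: 3 nablas}", and your surrogate-variable construction (excising the single stale sample $g_{i_k,k}$ from $y_{i_k,k}$, using the gossip structure to confirm that $\xi_{i_k,k}$ has not leaked into $x_{i_k,k}$, $x_{j_k,k}$, or the rest of $\tilde y_{i_k,k}$ during $i_k$'s inactive window, and then paying an $\alpha L\|g_{i_k,k}-\nabla f_{i_k}(x_{i_k,k})\|$ Lipschitz penalty) is exactly the gossip analogue of that proof. The one case you do not treat is the self-update event $\{j_k=i_k\}$, where the relevant update is \eqref{eq:x-update gossip2} with stepsize $2\alpha$ rather than \eqref{eq:x-update gossip}; there the perturbation is $x_{i_k,k+1}-\tilde x_{i_k,k+1}=-2\alpha\bigl(g_{i_k,k}-\nabla f_{i_k}(x_{i_k,k})\bigr)$, so the first term is bounded by $2\alpha L\sigma^2$ on that event — but since $\ik=0$ there, the second term vanishes and the total conditional bound is still $2\alpha L\sigma^2$ on both events, so the stated inequality survives. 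With that small case split added, the argument is complete.
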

The proof is similar to that of Lemma \ref{lem: 3 nablas} and is omitted here.
Equation (\ref{D_kG_k+1-D_kG_k_pre}) together with (\ref{bound: g_{i_k,k+1}-g_{i_k,k}}), (\ref{bound: g_{j_k,k+1}-g_{j_k,k}}) and Lemma \ref{lem: gossip 3 bablas} leads to the following inequality:
\begin{align}
\label{D_kG_k+1-D_kG_k_pre2}
& \bE[\|\tilde{\mathbf{D}}_k G_{k+1}-\tilde{\mathbf{D}}_k G_k\|^2]\notag\\
\le & L^2\bE[\|x_{i_k,k+1}-x_{i_k,k}\|^2]+L^2\bE[\ik\|x_{j_k,k+1}-x_{j_k,k}\|^2]+4(\alpha L+1)\sigma^2\notag\\
= & L^2\bE[\ik(\|x_{i_k,k+1}-x_{i_k,k}\|^2+\|x_{j_k,k+1}-x_{j_k,k}\|^2)]+L^2\bE[(1-\ik)\|x_{i_k,k+1}-x_{i_k,k}\|^2]+4(\alpha L+1)\sigma^2.
\end{align}
Note that from (\ref{eq:x-update gossip}) and (\ref{eq:x-update gossip2}), we have
\begin{align*}
& \ik(\|x_{i_k,k+1}-x_{i_k,k}\|^2+\|x_{j_k,k+1}-x_{j_k,k}\|^2)\\
= & \ik\left[\left\|\frac{1}{2}(x_{i_k,k}+x_{j_k,k})-\alpha y_{i_k,k}-x_{i_k,k}\right\|^2+\left\|\frac{1}{2}\left(x_{i_k,k}+x_{j_k,k}\right)-\alpha y_{j_k,k}-x_{j_k,k}\right\|^2\right]\\
= &\frac{1}{4}\ik\left(\|x_{j_k,k}-x_{i_k,k}-2\alpha y_{i_k,k}\|^2+\|x_{i_k,k}-x_{j_k,k}-2\alpha y_{j_k,k}\|^2\right)\\
= & \frac{1}{2}\ik\left[\|x_{i_k,k}-x_{j_k,k}\|^2+2\alpha^2\left(\|y_{i_k,k}\|^2+\|y_{j_k,k}\|^2\right)+2\alpha\langle x_{i_k,k}-x_{j_k,k}, y_{i_k,k}-y_{j_k,k} \rangle\right],
\end{align*}
and
\begin{equation*}
(1-\ik)\|x_{i_k,k+1}-x_{i_k,k}\|^2=4\alpha^2(1-\ik)\|y_{i_k,k}\|^2.
\end{equation*}
Then by (\ref{D_kG_k+1-D_kG_k_pre2}),
\begin{align}
\label{D_kG_k+1-D_kG_k_3}
&\bE[\|\tilde{\mathbf{D}}_k G_{k+1}-\tilde{\mathbf{D}}_k G_k\|^2]\notag\\
\le & L^2\bE\left[\ik\left(\frac{1}{2}\|x_{i_k,k}-x_{j_k,k}\|^2+\alpha^2\left(\|y_{i_k,k}\|^2+\|y_{j_k,k}\|^2\right)+\alpha\langle x_{i_k,k}-x_{j_k,k}, y_{i_k,k}-y_{j_k,k} \rangle\right)\right]\notag\\
& +4\alpha^2 L^2\bE[(1-\ik)\|y_{i_k,k}\|^2]+4(\alpha L+1)\sigma^2\notag\\
\le & L^2\bE\left[\ik\left(\|x_{i_k,k}-x_{j_k,k}\|^2+2\alpha^2\left(\|y_{i_k,k}\|^2+\|y_{j_k,k}\|^2\right)\right)\right]+4\alpha^2 L^2\bE[(1-\ik)\|y_{i_k,k}\|^2]+4(\alpha L+1)\sigma^2\notag\\
\le & \frac{L^2}{n}\bE\left[2\|\mI-\mPi\|\|\mx_k-\mathbf{1}\ox_k\|^2+4\alpha^2\|\my_k\|^2\right]+4(\alpha L+1)\sigma^2\notag\\
\le & \frac{4L^2}{n}\bE\left[\|\mx_k-\mathbf{1}\ox_k\|^2+\alpha^2\|\my_k-\mathbf{1}\oy_k\|^2+\alpha^2 n\|\oy_k\|^2\right]+4(\alpha L+1)\sigma^2.
\end{align}
For the last term on the right-hand side of (\ref{y_k+1-oy_k+1 pre}), note that
\begin{equation*}
2\langle \mathbf{W}_k\my_k-\mathbf{1}\oy_k,\tilde{\mathbf{D}}_k G_{k+1}-\tilde{\mathbf{D}}_k G_k\rangle=\langle y_{i_k,k}+y_{j_k,k}-2\oy_k,g_{i_k,k+1}-g_{i_k,k}+\ik(g_{j_k,k+1}-g_{j_k,k}) \rangle.
\end{equation*}
By Assumption \ref{asp: gradient samples}, we have
\begin{multline}
\label{W_k y_k, D_k G_k+1-D_k G_k}
2\bE[\langle \mathbf{W}_k\my_k-\mathbf{1}\oy_k,\tilde{\mathbf{D}}_k G_{k+1}-\tilde{\mathbf{D}}_k G_k\rangle]\\
=\bE[\langle y_{i_k,k}+y_{j_k,k}-2\oy_k, \nabla f_{i_k}(x_{i_k,k+1})-g_{i_k,k}+\ik(\nabla f_{j_k}(x_{j_k,k+1})-g_{j_k,k}) \rangle].
\end{multline}	
The following lemma is useful.
\begin{lemma}
	\label{lem: gossip 4 nablas}
	\begin{equation*}
	\bE[\langle y_{i_k,k}+y_{j_k,k}-2\oy_k, g_{i_k,k}-\nabla f_{i_k}(x_{i_k,k})+\ik(g_{j_k,k}-\nabla f_{j_k}(x_{j_k,k})) \rangle]\ge 0.
	\end{equation*}
\end{lemma}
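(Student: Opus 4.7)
The plan is to take the expectation in two stages: first over the random selection $(i_k, j_k)$ at step $k$, which is independent of the past randomness, and then over the remaining history $\mathcal{P}_k$ (all gossip selections and fresh draws before step $k$).

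In the first stage, using $\Pr(i_k=i,\,j_k=j)=\pi_{ij}/n$ together with the double stochasticity of $\mPi$, a direct bookkeeping calculation yields
\[
\bE\!\left[\langle y_{i_k,k}+y_{j_k,k}-2\oy_k,\,e_{i_k,k}+\ik e_{j_k,k}\rangle\,\big|\,\mathcal{P}_k\right] = \frac{2}{n}\sum_{i=1}^n\langle u_i,e_{i,k}\rangle + \frac{1}{n}\sum_{i\neq j}(\pi_{ij}+\pi_{ji})\langle u_i,e_{j,k}\rangle,
\]
where $u_i:=y_{i,k}-\oy_k$ and $e_{i,k}:=g_i(x_{i,k},\xi_{i,k})-\nabla f_i(x_{i,k})$.

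The second stage rests on a key independence claim: for any $l\neq i$, both $y_{l,k}$ and $x_{l,k}$ are independent of the fresh sample $\xi_{i,\tau_i(k)+1}=\xi_{i,k}$, where $\tau_i(k)<k$ is the last step at which agent $i$ participated in a gossip update. The reason is that, by the very definition of $\tau_i(k)$, agent $i$ does not participate in any update at steps $s\in(\tau_i(k),k-1]$, so the fresh draw $\xi_{i,\tau_i(k)+1}$ cannot propagate to the $y$ or $x$ variable of any other agent through the mixing recursions (\ref{eq:x-update gossip})--(\ref{eq:y-update gossip}). Moreover, $x_{i,k}$ itself is independent of $\xi_{i,k}$ by (\ref{eq:x-update gossip}). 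Combined with the invariant $\oy_k=\tfrac{1}{n}\sum_l g_{l,k}$ and Assumption \ref{asp: gradient samples}, the tower property yields
\[
\bE[\langle y_{i,k},e_{i,k}\rangle]=\bE[\|e_{i,k}\|^2],\quad \bE[\langle y_{i,k},e_{j,k}\rangle]=0\ (i\neq j),\quad \bE[\langle \oy_k,e_{i,k}\rangle]=\tfrac{1}{n}\bE[\|e_{i,k}\|^2].
\]

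Substituting these identities into the first-stage formula and applying $\sum_{i\neq j}(\pi_{ij}+\pi_{ji})=2(1-\pi_{jj})$, the expression collapses after algebra to
\[
\bE\!\left[\langle y_{i_k,k}+y_{j_k,k}-2\oy_k,\,e_{i_k,k}+\ik e_{j_k,k}\rangle\right] = \frac{2}{n^2}\sum_{i=1}^n\bigl(n-2+\pi_{ii}\bigr)\,\bE[\|e_{i,k}\|^2],
\]
which is nonnegative since $n\ge 2$ and $\pi_{ii}\ge 0$. The main obstacle will be rigorously establishing the independence claim by inducting on time and carefully tracking how the linear $g_{l,s}$-expansion of each $y_{\cdot,\cdot}$ and $x_{\cdot,\cdot}$ can (or cannot) propagate the fresh draw $\xi_{i,\tau_i(k)+1}$ to other agents' variables.
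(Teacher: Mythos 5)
Your proposal is correct and rests on the same mechanism as the paper's proof: the only component of $y_{j,k}$ (and of $\oy_k=\frac{1}{n}\sum_l g_{l,k}$) correlated with agent $i$'s sampling noise $e_{i,k}$ is the latest stochastic gradient $g_{i,k}$ itself, which turns the inner product into a nonnegative combination of $\bE[\|e_{i,k}\|^2]$ terms. Your closed form $\frac{2}{n^2}\sum_i(n-2+\pi_{ii})\bE[\|e_{i,k}\|^2]$ is exactly what the paper's two pieces, $(1-\frac{2}{n})\bE[\ik(\|e_{i_k,k}\|^2+\|e_{j_k,k}\|^2)]$ and $2(1-\frac{1}{n})\bE[(1-\ik)\|e_{i_k,k}\|^2]$, sum to after averaging over the selection, so the only difference is that you marginalize over $(i_k,j_k)$ first while the paper splits on the indicator $\ik$.
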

\begin{proof}
	See Appendix \ref{appendix: proof lem: gossip 4 nablas}.
\end{proof}
In light of (\ref{W_k y_k, D_k G_k+1-D_k G_k}) and Lemma \ref{lem: gossip 4 nablas},
\begin{align}
\label{W_k y_k, D_k G_k+1-D_k G_k 2}
& 2\bE[\langle \mathbf{W}_k\my_k-\mathbf{1}\oy_k,\tilde{\mathbf{D}}_k G_{k+1}-\tilde{\mathbf{D}}_k G_k\rangle]\notag\\
\le & \bE[\langle y_{i_k,k}+y_{j_k,k}-2\oy_k, \nabla f_{i_k}(x_{i_k,k+1})-\nabla f_{i_k}(x_{i_k,k})+\ik[\nabla f_{j_k}(x_{j_k,k+1})-\nabla f_{j_k}(x_{j_k,k})] \rangle]\notag\\
=  & \bE[\ik\langle y_{i_k,k}+y_{j_k,k}-2\oy_k, \nabla f_{i_k}(x_{i_k,k+1})-\nabla f_{i_k}(x_{i_k,k})+\nabla f_{j_k}(x_{j_k,k+1})-\nabla f_{j_k}(x_{j_k,k}) \rangle]\notag\\
& + 2\bE[(1-\ik)\langle y_{i_k,k}-\oy_k, \nabla f_{i_k}(x_{i_k,k+1})-\nabla f_{i_k}(x_{i_k,k})\rangle].
\end{align}
Notice that from (\ref{eq:x-update gossip}), (\ref{eq:x-update gossip2}) and Assumption \ref{asp: strconvexity},
\begin{align*}
& \ik\langle y_{i_k,k}+y_{j_k,k}-2\oy_k, \nabla f_{i_k}(x_{i_k,k+1})-\nabla f_{i_k}(x_{i_k,k})+\nabla f_{j_k}(x_{j_k,k+1})-\nabla f_{j_k}(x_{j_k,k}) \rangle\\
\le & L\ik (\|y_{i_k,k}-\oy_k\|+\|y_{j_k,k}-\oy_k\|)(\|x_{i_k,k+1}-x_{i_k,k}\|+\|x_{j_k,k+1}-x_{j_k,k}\|)\\
= & \frac{L}{2}\ik(\|y_{i_k,k}-\oy_k\|+\|y_{j_k,k}-\oy_k\|)(\|x_{j_k,k}-x_{i_k,k}-2\alpha y_{i_k,k}\|+\|x_{i_k,k}-x_{j_k,k}-2\alpha y_{j_k,k}\|)\\
= & L\ik (\|y_{i_k,k}-\oy_k\|+\|y_{j_k,k}-\oy_k\|)\|x_{i_k,k}-x_{j_k,k}\|+\alpha L\ik(\|y_{i_k,k}-\oy_k\|+\|y_{j_k,k}-\oy_k\|)(\|y_{i_k,k}\|+\|y_{j_k,k}\|),
\end{align*}
and
\begin{multline*}
(1-\ik)\langle y_{i_k,k}-\oy_k, \nabla f_{i_k}(x_{i_k,k+1})-\nabla f_{i_k}(x_{i_k,k})\rangle\le L(1-\ik)\| y_{i_k,k}-\oy_k\| \|x_{i_k,k+1}-x_{i_k,k}\|\\
= 2\alpha L(1-\ik)\| y_{i_k,k}-\oy_k\|\|y_{i_k,k}\|.
\end{multline*}
We have from (\ref{W_k y_k, D_k G_k+1-D_k G_k 2}) that
\begin{align}
\label{W_k y_k, D_k G_k+1-D_k G_k 3}
& 2\bE[\langle \mathbf{W}_k\my_k-\mathbf{1}\oy_k,\tilde{\mathbf{D}}_k G_{k+1}-\tilde{\mathbf{D}}_k G_k\rangle]\notag\\
\le & L\bE[\ik (\|y_{i_k,k}-\oy_k\|+\|y_{j_k,k}-\oy_k\|)\|x_{i_k,k}-x_{j_k,k}\|]\notag\\
& +\alpha L\bE[\ik(\|y_{i_k,k}-\oy_k\|+\|y_{j_k,k}-\oy_k\|)(\|y_{i_k,k}\|+\|y_{j_k,k}\|)]+4\alpha L\bE[(1-\ik)\| y_{i_k,k}-\oy_k\|\|y_{i_k,k}\|)\notag\\
\le & \bE\left[\ik \beta_2(\|y_{i_k,k}-\oy_k\|^2+\|y_{j_k,k}-\oy_k\|^2)+\frac{L^2\ik}{2\beta_2}\|x_{i_k,k}-x_{j_k,k}\|^2\right]\notag\\
& +\alpha L\bE\left[\ik(\|y_{i_k,k}-\oy_k\|^2+\|y_{j_k,k}-\oy_k\|^2+\|y_{i_k,k}\|^2+\|y_{j_k,k}\|^2)\right]+2\alpha L\bE[(1-\ik)(\|y_{i_k,k}-\oy_k\|^2+\|y_{i_k,k}\|^2)]\notag\\
\le & \frac{2}{n}\beta_2\bE[\|\my_k-\mathbf{1}\oy_k\|^2]+\frac{L^2\|\mI-\mPi\|}{n}\frac{1}{\beta_2}\bE[\|\mx_k-\mathbf{1}\ox_k\|^2]+\frac{2\alpha L}{n}\bE[\|\my_k-\mathbf{1}\oy_k\|^2]+\frac{2\alpha L}{n}\bE[\|\my_k\|^2]\notag\\
= & \frac{2}{n}\beta_2\bE[\|\my_k-\mathbf{1}\oy_k\|^2]+\frac{2L^2}{n}\frac{1}{\beta_2}\bE[\|\mx_k-\mathbf{1}\ox_k\|^2]+\frac{4\alpha L}{n}\bE[\|\my_k-\mathbf{1}\oy_k\|^2]+2\alpha L\bE[\|\oy_k\|^2],
\end{align}
for any $\beta_2>0$. In light of (\ref{my_k-1 oy_k WW bound}), (\ref{D_kG_k+1-D_kG_k_3}) and (\ref{W_k y_k, D_k G_k+1-D_k G_k 3}), we obtain by (\ref{y_k+1-oy_k+1 pre}) that
\begin{align*}
& \bE[\|\my_{k+1}-\mathbf{1}\oy_{k+1}\|^2]\\
\le & \left(\rho_{\bar{w}}+\frac{2}{n}\beta_2+\frac{4\alpha L}{n}\right)\bE[\|\my_k-\mathbf{1}\oy_k\|^2]+\frac{2L^2}{n}\frac{1}{\beta_2}\bE[\|\mx_k-\mathbf{1}\ox_k\|^2]+2\alpha L\bE[\|\oy_k\|^2]\\
& +\frac{4L^2}{n}\left[\bE[\|\mx_k-\mathbf{1}\ox_k\|^2]+\alpha^2\bE[\|\my_k-\mathbf{1}\oy_k\|^2]+\alpha^2 n\bE[\|\oy_k\|^2]\right]+4(\alpha L+1)\sigma^2\\
=& \left(\rho_{\bar{w}}+\frac{2}{n}\beta_2+\frac{4\alpha L}{n}+\frac{4\alpha^2L^2}{n}\right)\bE[\|\my_k-\mathbf{1}\oy_k\|^2\mid\mathcal{F}_k]+\frac{L^2}{n}\left(4+\frac{2}{\beta_2}\right)\bE[\|\mx_k-\mathbf{1}\ox_k\|^2]\\
& +(4\alpha^2L^2+2\alpha L)\bE[\|\oy_k\|^2]+4(\alpha L+1)\sigma^2.
\end{align*}
Since by (\ref{gossip oy_k}),
\begin{equation*}
\bE[\|\oy_k\|^2]
\le \frac{\sigma^2}{n}+\frac{2L^2}{n}\bE[\|\mx_k-\mathbf{1}\ox_k\|^2]+2L^2\bE[\|\ox_k-x^*\|^2].
\end{equation*}
We conclude that
\begin{multline*}
\bE[\|\my_{k+1}-\mathbf{1}\oy_{k+1}\|^2]\le \left(\rho_{\bar{w}}+\frac{2}{n}\beta_2+\frac{4\alpha L}{n}+\frac{4\alpha^2L^2}{n}\right)\bE[\|\my_k-\mathbf{1}\oy_k\|^2]\\
+\frac{L^2}{n}\left(4+\frac{2}{\beta_2}+8\alpha^2 L^2+4\alpha L\right)\bE[\|\mx_k-\mathbf{1}\ox_k\|^2]+(8\alpha^2L^4+4\alpha L^3)\bE[\|\ox_k-x^*\|^2]+\frac{(4\alpha^2L^2+2\alpha L)\sigma^2}{n}\\
+4(\alpha L+1)\sigma^2.
\end{multline*}

\subsection{Proof of Lemma \ref{lem: gossip 4 nablas}}
\label{appendix: proof lem: gossip 4 nablas}
The following relation holds:
\begin{multline*}
\bE[\langle y_{i_k,k}+y_{j_k,k}-2\oy_k, g_{i_k,k}-\nabla f_{i_k}(x_{i_k,k})+\ik(g_{j_k,k}-\nabla f_{j_k}(x_{j_k,k})) \rangle]\\
=\bE[\ik\langle y_{i_k,k}+y_{j_k,k}-2\oy_k, g_{i_k,k}-\nabla f_{i_k}(x_{i_k,k})+g_{j_k,k}-\nabla f_{j_k}(x_{j_k,k})\rangle]
+2\bE[(1-\ik)\langle y_{i_k,k}-\oy_k, g_{i_k,k}-\nabla f_{i_k}(x_{i_k,k})\rangle].
\end{multline*}
From the updating rules (\ref{eq:y-update gossip}) and (\ref{eq:y-update gossip2}), we have
\begin{multline*}
\bE[\ik\langle y_{i_k,k}+y_{j_k,k}-2\oy_k, g_{i_k,k}-\nabla f_{i_k}(x_{i_k,k})+g_{j_k,k}-\nabla f_{j_k}(x_{j_k,k})\rangle]\\
=\left(1-\frac{2}{n}\right)\bE[\ik\langle (g_{i_k,k}+g_{j_k,k}), g_{i_k,k}-\nabla f_{i_k}(x_{i_k,k})+g_{j_k,k}-\nabla f_{j_k}(x_{j_k,k})\rangle]\\
=\left(1-\frac{2}{n}\right)\bE[\ik\|g_{i_k,k}-\nabla f_{i_k}(x_{i_k,k})\|^2+\ik\|g_{j_k,k}-\nabla f_{j_k}(x_{j_k,k})\|^2]\ge 0,
\end{multline*}
and
\begin{equation*}
\bE[(1-\ik)\langle y_{i_k,k}-\oy_k, g_{i_k,k}-\nabla f_{i_k}(x_{i_k,k})\rangle]=\left(1-\frac{1}{n}\right)\bE[(1-\ik)\|g_{i_k,k}-\nabla f_{i_k}(x_{i_k,k})\|^2]\ge 0.
\end{equation*}
Hence
\begin{equation*}
\bE[\langle y_{i_k,k}+y_{j_k,k}-2\oy_k, g_{i_k,k}-\nabla f_{i_k}(x_{i_k,k})+\ik(g_{j_k,k}-\nabla f_{j_k}(x_{j_k,k})) \rangle]\ge 0.
\end{equation*}


\bibliographystyle{spmpsci}      
\bibliography{mybib}

\end{document}